\numberwithin{equation}{section}
\theoremstyle{plain}
\newtheorem{theorem}{Theorem}
\numberwithin{theorem}{section}
\newtheorem{lemma}[theorem]{Lemma}                              
\newtheorem{proposition}[theorem]{Proposition}
\theoremstyle{definition}
\newtheorem{definition}[theorem]{Definition}
\newtheorem{example}[theorem]{Example}
\newtheorem{notation}[theorem]{Notation}
\newtheorem{remark}[theorem]{Remark}
\newtheorem{assumption}[theorem]{Assumption}
\def \barG {\Gamma}
\def \s {{\sigma}}
\def \d {{\delta}}
\def \g {{\gamma}}
\def \a {{\alpha}}
\def \b {{\beta}}
\def \l {{\lambda}}
\def \ggg {{\beta}}
\def \z {{\zeta}}
\def \R  {{\mathbb {R}}}
\def \x {{\xi}}
\def \g {{\gamma}}
\def \eps {{\varepsilon}}
\def \m {{\mu}}
\newcommand\N{\mathbb{N}}
\newcommand\Eb{\mathbb{E}}
\newcommand\Nb{\mathbb{N}}
\newcommand\G{\Gamma}
\newcommand\p{\partial}
\newcommand\Act{{\mathscr{A}}}
\newcommand\at{{a}}
\newcommand\Ac{\mathscr{A}}
\newcommand\Gc{\mathscr{G}}
\newcommand\Lc{\mathscr{L}}
\newcommand\Mc{\mathscr{M}}
\newcommand\Kc{\mathscr{K}}
\newcommand\xb{\bar{x}}
\newcommand\zb{\bar{z}}
\newcommand\Cv{\mathbf{C}}
\newcommand\Mv{\mathbf{M}}
\def \TT {\mathbf{T}}
\def \p {{\partial}}
\newcommand{\norm}[1]{\left\|{#1}\right\|}
\newcommand\dd{d}
\newcommand\Nzero{\mathbb{N}_{0}}
\newcommand\Rdd{\mathbb{R}\times\mathbb{R}^{d}}
\newcommand\formaldeg{m}
\newcommand\Ndzero{\mathbb{N}^d_{0}}
\newcommand\Rd{\mathbb{R}^{d}}
\def \phi {{\varphi}}
\def \eps {{\varepsilon}}
\begin{document}
\begin{frontmatter}
\title{Intrinsic expansions for averaged diffusion processes}
\author[pol]{S. Pagliarani \corref{cor1}\fnref{fn1}}
\ead{stepagliara1@gmail.com}

\author[bol]{A. Pascucci}
\ead{andrea.pascucci@unibo.it}

\author[bol]{M. Pignotti}
\ead{michele.pignotti2@unibo.it}

\address[pol]{DEAMS, Universit\`a di Trieste, Via Tigor 22, 34124 Trieste, Italy}
\address[bol]{Dipartimento di Matematica, Universit\`a di Bologna, Bologna, Italy}

\cortext[cor1]{Corresponding author}

\fntext[fn1]{The author's research was supported by the Chair {\it Financial Risks} of the {\it Risk Foundation} and the {\it  Finance for Energy Market Research Centre}.}

\date{This version: \today}

\begin{abstract}
We show that the convergence rate of asymptotic expansions for solutions of SDEs is higher in the case of {degenerate 
diffusion} compared to the elliptic case, i.e. it is higher when the Brownian motion directly acts only along some directions. 
In the scalar case, this phenomenon was already observed in \cite{Gobet2014_weak} using Malliavin calculus techniques. Here, we provide a general and detailed analysis by {employing} the recent study of intrinsic functional  spaces related to hypoelliptic Kolmogorov operators  in \cite{PPP16}. Applications to finance are discussed, 
in the study of path-dependent derivatives (e.g. Asian options) and in models incorporating dependence on past information.
\end{abstract}
\begin{keyword}averaged diffusion, hypoelliptic Kolmogorov operators, asymptotic expansion, Asian option
\end{keyword}
\end{frontmatter}
%
%
\section{Introduction}

{\noindent We study the asymptotic expansion of the conditional expectation
\begin{equation}\label{eq:cond_expect}
 u(t,x):=\Eb_{t,x} [\varphi(X_T)], 
\end{equation}
where $X=(X_{t})_{t\in[0,T]}$ is a continuous $\R^d$-valued Feller process 
%
and a degenerate diffusion in a sense that will be specified later.

The prototype process we have
in mind is $X=(S,A)$  solution to the SDE
\begin{equation}\label{BS}
  \begin{cases} dS_{t}=\s S_{t}dW_{t}, \\
  dA_{t}=S_{t}dt,
 \end{cases}
\end{equation}
where $W$ is a real Brownian motion. In financial applications, $S$ and $A$ represent the price
and average processes respectively, in the Black\&Scholes model for arithmetic Asian options.
The infinitesimal generator of $(S,A)$ 
  $$\mathcal{A}_{X}:=
  \frac{\s^{2}s^{2}}{2}\p_{ss}+s\p_{a},\qquad (s,a)\in\R_{>0}\times\R_{>0},$$
is degenerate in two ways: on the one hand, the quadratic form of the second order part is
singular (it has rank one) and, on the other hand, it degenerates completely on the half-line $\{s=0,a>0\}$. 
However, for any $0<a<b$, $\mathcal{A}_{X}$ is a hypoelliptic operator on the strip
$D:=\,]a,b[\,\times\R_{>0}$ and 
coincides on $D$ with an operator that satisfies the H\"ormander
condition globally, the latter obtained by smoothly perturbing the second order coefficient $\s^2 s^2$ outside $D$. By performing a local analysis, we aim at exploiting this fact to prove error
estimates, uniform w.r.t. $x=(s,a)\in D$, for the intrinsic asymptotic expansions of the conditional expectation in
\eqref{eq:cond_expect}.


\medskip 
In general, we assume that the infinitesimal generator of $X$ {\it coincides, on a
domain $D$} of $\R^{d}$, with a differential operator of the form }
\begin{align}\label{Ac}
 \Act &= \frac{1}{2}\sum_{i,j=1}^{p_0}\at_{ij}(t,x)\p_{x_{i}x_{j}}+ \sum_{i=1}^{p_0}\at_{i}(t,x)\p_{x_{i}}+ \langle B x, \nabla_x  \rangle 
 , \qquad (t,x)\in \R\times\Rd,
\end{align}
where $p_0\leq d$ and $\Act$ verifies the following
\begin{assumption}\label{assA}
$A_0:=\big(\at_{ij}(t,x)\big)_{i,j=1,\cdots,p_0}$ satisfies the non-degeneracy condition
\begin{align}\label{cond:parabolicity}
 \m M |\xi|^2< \sum_{i,j=1}^{p_0}\at_{ij}(t,x)\xi_{i}\xi_{j}< M |\xi|^2,\qquad (t,x)\in \R\times\Rd,\
 \xi\in\mathbb{R}^{p_0},
\end{align}
for some positive constants $M$ and $\m$;
\end{assumption}
\begin{assumption}\label{assB}
$B$ is a $(d\times d)$-matrix with constant entries satisfying the following structural condition
\begin{equation}\label{eq:B_blocks} B=\left(
\begin{array}{ccccc}
0 & 0 & \cdots & 0 & 0 \\ B_1 & 0 &\cdots& 0 & 0 \\ 0 & B_2 &\cdots& 0& 0 \\ \vdots & \vdots
&\ddots& \vdots&\vdots \\ 0 & 0 &\cdots& B_r& 0
\end{array}
\right)
\end{equation}
where each $B_j$ is a $(p_j\times p_{j-1})$-matrix of rank $p_j$ and
\begin{equation}
 p_0\geq p_1\geq \cdots \geq p_r\geq 1, \qquad \sum_{j=0}^r p_j = d.
\end{equation}
\end{assumption}

 Assumption \ref{assB} implies that vector
fields $\partial_{x_1},\dots,\p_{x_{p_{0}}}$ and
\begin{align}\label{eq:ste001}
 Y:=\langle B x, \nabla_x  \rangle+\partial_t
\end{align}
satisfy the H\"ormander condition (cf. \cite{LanconelliPolidoro1994}). 
Under suitable regularity conditions 
that will be specified later, the ultra-parabolic operator
\begin{equation}\label{Kc}
 \Kc := \Act + \partial_t
\end{equation}
admits a fundamental solution (see \cite{Polidoro1994} and \cite{amrx}). In the case $p_0<d$,
which is the focus of this work, this is a remarkable fact as the second order part of $\Ac$ is
fully
degenerate at any point. Operators $\Kc$ of this kind 
are often referred to as \emph{Kolmogorov operators}.

Our analysis takes advantage of the intrinsic geometry and the related regularity structures
induced by the Kolmogorov operator $\Kc$. These features bring a number of benefits that are
explained here below, and distinguish our approach from others in the literature. 
It is worth to emphasize further that our results are carried out under strictly {\it local
assumptions} on the generator of $X$, which coincides with a Kolmogorov
operator on a domain $D$, not necessarily equal to $\R^{d}$. {This allows to include 
degenerate models with relevant financial applications, such as the well-known CEV model (that is
when $\s$ in \eqref{BS} is not a constant but a function of $S$ of the form $\s(S)=S^{\g}$ for
some $\g\in\R$) and the Heston
stochastic volatility model as very particular cases.} 
The proof of our main result, Theorem \ref{th:error_estimates_taylor}, will be split in two
separate steps: first, in Theorem \ref{th:error_estimates_taylor_global}, we consider the case $D=\R^d$ 
for which we employ some Gaussian upper bounds for the transition density of $X$; 
second, we adapt a localization procedure, originally introduced in \cite{Safonov1998} and lately
extended in \cite{Cinti2009135}, {which is based on the Gaussian bounds for a dummy 
diffusion $\tilde{X}$ that is generated by $\mathcal{A}$ in \eqref{Ac}.} The latter localization
procedure is coherent with what is known in the theory of diffusions as the \emph{principle of not
feeling the boundary} (cf. \cite{Hsu}, \cite{Gathera2012}).

\subsection{Intrinsic Taylor-based asymptotic expansions}\label{sec:intro_expansions}
Intrinsic H\"older and Sobolev spaces for Kolmogorov operators were studied by several authors,
among others
\cite{Francesco}, \cite{BraCEMA}, \cite{Manfredini}, \cite{Lunardi1997},
\cite{NyPasPol10} 
and \cite{Menozzi}. {In this paper we use the intrinsic H\"older spaces $C^{n,\alpha}_B$ in
Definition \ref{def:C_alpha_spaces} below, as defined in \cite{PPP16} where the authors also
proved a Taylor formula with reminder expressed in terms of the homogeneous norm induced by the
operator (see Theorem \ref{eq:ste31} below).}  
Deferring precise definitions and statements until Section \ref{sec:Kolmogorov}, the $n$-th order
{intrinsic} Taylor polynomial, centered at $\zeta=(s,\x) \in \R\times\Rd$, of a function $f\in
C^{n,\alpha}_B$ reads as
\begin{equation}\label{eq:tay_int}
 \TT_n(f,\z )(z):=  \sum_{{2 k + |\beta|_B \leq n}}\frac{1}{k!\,\beta!}
 \big( Y^k \partial_{\xi}^{\beta}f(s,\xi)\big) (t-s)^k\big( x-e^{(t-s)B}\xi  \big)^{\beta},\qquad
 z=(t,x)\in\Rdd, 
\end{equation}
where $|\beta|_B$, given in \eqref{hei}, is a suitable weight for the multi-index\footnote{We
denote by $\N$ the set of natural numbers and $\N_{0}=\N\cup\{0\}$.} $\b\in\N_{0}^{d}$. Such
Taylor expansion forms the cornerstone of the perturbation technique that we study in this paper.
Here below we summarize the intuitive idea behind it and its primary features.

We recall that, under mild assumptions that will be specified in Section \ref{sec:approximating1}, the function $u$ in \eqref{eq:cond_expect} satisfies 
\begin{equation}\label{equaz1}
  \begin{cases}
    \Kc u=0,\qquad &\text{on } [0,T[\times D, \\
    u(T,\cdot)=\varphi,\qquad &\text{on } D.
  \end{cases}
\end{equation}
Notice that \eqref{equaz1} is not a standard Cauchy-Dirichlet problem since no lateral
boundary conditions are imposed. In a series of papers, two of the authors propose a perturbative
method to carry out a closed-from approximation of solutions to \eqref{equaz1}
under the assumption that $\Kc$ in \eqref{Ac}-\eqref{Kc} is locally parabolic, i.e. $p_0=d$ and $B=0$ in \eqref{eq:B_blocks} 
(for a recent and thorough description the reader can refer to
\cite{LPP4}, \cite{PP_compte_rendu}). 
The basic idea is to approximate the generator 
by Taylor expanding 
its coefficients, and take advantage of some symmetry properties of Gaussian kernels. Sharp
short-time/small-noise asymptotic estimates for the remainder of the expansion are then proved.
In order to generalize the aforementioned technique to the case $p_0<d$, we perform an expansion
that is compatible with the sub-elliptic geometry induced by Kolmogorov operators. Assuming
$\at_{ij},a_{i}\in C_B^{N,1}$, we expand the operator $\Kc$ through the sequence
$\big(\Kc^{(\zb)}_n\big)_{0\le n\le N}$ defined as
\begin{equation}\label{e1_ter}
  \Kc^{(\zb)}_n=\frac{1}{2} \sum_{ {i,j=1}}^{p_{0}} \TT_n\left( \at_{ij},\bar{z}\right)(z) \p_{x_{i}x_{j}}+
  \sum_{ {i=1}}^{p_{0}} \TT_{n-1}\left( \at_{i},\bar{z}\right)(z) \p_{x_{i}}+ Y,
 \qquad 
 z=(t,x)\in\R\times\R^{d},
\end{equation}
where $\TT_n\left( \at_{ij},\bar{z}\right)$ is the Taylor polynomial of $\at_{ij}$, defined as in
\eqref{eq:tay_int}, centered at a fixed point $\bar{z}\in 
\Rdd$, and $\TT_{-1}\left(
\at_{i},\bar{z}\right)\equiv 0$.
\begin{remark}
When $p_0<d$, the intrinsic space $C^{n,\alpha}_B$ is strictly contained into the corresponding Euclidean H\"older space
$C^{n,\alpha}$: for this reason, the regularity assumptions on the coefficients are weaker than in the parabolic case.
\end{remark}

The leading term of the expansion, $\Kc^{(\zb)}_0$, is the Kolmogorov operator
with constant coefficients 
\begin{equation}\label{K0}
 \Kc^{(\zb)}_0=\frac{1}{2} \sum_{ {i,j=1}}^{p_{0}} \at_{ij}(\bar{z}) \p_{x_{i}x_{j}}
 +Y,
\end{equation}
defined on $\Rdd$. It is well-known that $\Kc^{(\zb)}_0$ admits a Gaussian fundamental solution 
that satisfies some remarkable symmetry properties
written  in terms of the increments appearing in the intrinsic Taylor polynomials in
\eqref{eq:tay_int}. 
The main result of this paper, Theorem \ref{th:error_estimates_taylor}, 
provides an explicit approximating expansion for $u(t,x)$ in \eqref{eq:cond_expect}, equipped with
sharp short-time error bounds, and can be roughly summarized as:
\begin{equation}\label{eq:asympt_exp}
 u(t,x)= u_0 (t,x) + \sum_{n=1}^{N} \Lc_n(t,T,x) \, u_0(t,x)+ 
 \text{O}\left((T-t)^{\frac{ N+1+k }{2}}\right) \qquad \text{as }t\to T^{-},
\end{equation}
uniformly with respect to $x\in D$, where:
\begin{itemize}
  \item[-] the leading term $u_0$ is the solution of the Cauchy problem for $\Kc^{(\zb)}_{0}$ with final datum $\phi$;
  \item[-] $(\Lc_n)_{1\le n\le N}$ is a family of differential operators, acting on $x$, that can be explicitly computed in terms of the intrinsic Taylor polynomials
  $\TT_n\left(\at_{ij},\bar{z}\right)$ and $\TT_n\left( \at_{i},\bar{z}\right)$ (see Theorem \ref{th:un_general_repres});
  \item[-] the positive exponent $k$, contributing to the asymptotic rate of convergence, is the intrinsic H\"older exponent of $\varphi$. Precisely, $\phi\in C_B^{k}$ according to Definition \ref{Calpha} below.

\end{itemize}
Such approximation turns out to be optimal to several extents
. In particular, the benefit in exploiting the intrinsic regularity is threefold:
first, since
the intrinsic Taylor polynomial 
is typically a projection of the Euclidean one,
we avoid taking up terms in the expansion that do not improve the quality of the approximation;
secondly, the fact that the increments of the intrinsic Taylor polynomial appear in the symmetries
of the fundamental solution of $\Kc_0^{(\zb)}$
allows to get compact approximation formulas; 
{finally, the asymptotic rate of convergence of the expansion also depends on the {\it intrinsic
regularity} of the datum $\phi$, which is typically higher than the Euclidean regularity.
This is particularly relevant in the financial applications (see 
Remark \ref{rem:asympt_conv_asian} below).
\subsection{Applications to finance and comparison with the existing literature}\label{sec:asian}
The application of Kolmogorov operators 
in mathematical finance is particularly relevant in the pricing of Asian-style derivatives. These
are financial claims whose payoff is a function not only of the terminal value of an underlying
asset, but also of its average over a certain time-period. In most cases of interest, the problem
of computing the conditional expectation \eqref{eq:cond_expect}, which defines the no-arbitrage
price of such financial claims, is not known to have an explicit solution, and thus a considerably
large amount of literature has been developed in the last decades in order to find accurate and
quickly computable approximate solutions. Some of these approaches make use of asymptotic
techniques that lead to semi-closed approximation formulas. In this section we aim at firming our
results within the existing literature on analytical approximations of Asian-style derivatives.
Before to proceed we recall that other financial applications, where averaged-diffusion processes
are employed, include volatility models with path-dependent coefficients, e.g. the Hobson-Rogers
model \cite{HobsonRogers}.

Let us 
resume our first example \eqref{BS} and now assume that $S$ follows the more general dynamics
\begin{equation}\label{eq:dyn_S}
 d S_t = \sigma \left(t, S_t,  A_t\right)d W_t.
\end{equation}
In this case, $\at_{11}(t,x_1,x_2) = \sigma^2(t,x_1,x_2)$ and its $n$-th order intrinsic Taylor polynomial
centered at $\z=(s,\xi_1,\xi_2)$ reads as
\begin{equation}
 \TT_n \left(\at_{11},\z\right)(t,x_1,{x_2}) = \sum_{2k + \beta_0 + 3 \beta_1  \leq n} \frac{(\partial_{s} +  \xi_1 \partial_{\xi_2})^k \partial_{\xi_1}^{\beta_0}\partial_{\xi_2}^{\beta_1}\at_{11}(s,\xi_1,\xi_2)}
 {k!\,\beta_0!\, \beta_1!} (t-s)^k  (x_1 - \xi_1)^{\beta_0} \big({x_2} - \xi_2 -(t-s) \xi_1 \big)^{\beta_1} . 
\end{equation}
More explicitly, up to order $3$ we have
\begin{align}
 \TT_0 \left(\at_{11},\z\right)(t,x_1,{x_2}) &= \at_{11} (\z), \\
   \TT_1 \left(\at_{11},\z\right)(t,x_1,{x_2}) &= \TT_0 \left(\at_{11},\z\right)(t,x_1,{x_2}) +  (x_1 - \xi_1) \partial_{\xi_1} \at_{11} (\z),\\
 \TT_2 \left(\at_{11},\z\right)(t,x_1,{x_2}) &= \TT_1 \left(\at_{11},\z\right)(t,x_1,{x_2}) +  \frac{(x_1 - \xi_1)^2}{2!}\, \partial^2_{\xi_1} \at_{11} (\z) + (t-s)  (\partial_{s} +  \xi_1 \partial_{\xi_2}) \at_{11} (\z), \\
  \TT_3 \left(\at_{11},\z\right)(t,x_1,{x_2}) &= \TT_2 \left(\at_{11},\z\right)(t,x_1,{x_2}) +  \frac{(x_1 - \xi_1)^3}{3!}\, \partial^3_{\xi_1} \at_{11} (\z) +  ({x_2} - \xi_2 - (t-s)\xi_1 )\, \partial_{\xi_2} \at_{11} (\z)\\
  &\quad + (t-s) (x_1 - \xi_1)  (\partial_{s} +  \xi_1 \partial_{\xi_2}) \partial_{\xi_1} \at_{11} (\z),\hspace{-3pt}
\end{align}
which shows that the increment in the time variable appears only from the $2$nd order on, whereas the increment along the average variable appears from the $3$rd order on. As it was mentioned above, the operators $\Lc^{(\z)}_n$ appearing in the asymptotic expansion  in \eqref{eq:asympt_exp} can be explicitly computed by applying \eqref{eq:def_Ln}-\eqref{def_Gn}-\eqref{eq:M}-\eqref{eq:covariance_mean}. In this case they read as
\begin{align}
\Lc^{(\z)}_n(t,T,x) &=\frac{1}{2} \int_{t}^{T} \big(\TT_n  (a_{11},\z)  - \TT_{n-1}  (a_{11},\z) \big)   \big(s,\Mc^{(\z)}(s-t,{x_1},{x_2}) \big) \big( \partial_{x_1} - (s-t) \partial_{x_2} \big)^2 ds, \\ 
\Mc^{(\z)}(t,{x_1},{x_2}) &= \Big( {x_1} + a_{11}(\z) t \partial_{x_1} - a_{11}(\z) \frac{t^2}{2} \partial_{x_2} \ , \ t x_1 + x_2 - a_{11}(\z)\frac{t^2}{2} \partial_{x_1} + a_{11}(\z)\frac{t^3}{6} \partial_{x_2} \Big).
\end{align}
In order to show an even more explicit sample, at order $1$ we have:
\begin{equation}
\Lc^{(\z)}_1(t,T,x)  = \frac{\partial_{\xi_1} a_{11}(\z)}{2}\int_t^T \Big(  ({x_1}-\xi_1) + {a_{11}(\z)(s-t) \partial_{x_1}} - \frac{a_{11}(\z)}{2}(s-t)^2 \partial_{x_2} \Big) \big( \partial_{x_1} - (s-t) \partial_{x_2} \big)^2 d s. 
\end{equation}

Two typical arithmetic Asian options are the so-called \emph{floating strike} and \emph{fixed
strike} Call options, whose payoffs are given respectively by
\begin{align}\label{eq:payoff_asian}
 \varphi_{\text{float}}(x_1,x_2) = \big(x_1 - x_2/T\big)^+, \qquad  \varphi_{\text{fixed}}(x_1,x_2) = \big( x_2/T -
 K\big)^+,
\end{align}
where $T$ is the maturity and $K$ is the strike price.
\begin{remark}\label{rem:asympt_conv_asian}
The payoff $\varphi_{\text{fixed}}$ is Lipschitz continuous in the standard Euclidean sense but
has higher intrinsic regularity (namely, $C^{3}_{B}$ according to Definition \ref{Calpha}, see
also Example \ref{exreg}): this property reflects a higher rate of convergence of the asymptotic
expansion \eqref{eq:asympt_exp} compared with other expansions based on standard Taylor
polynomials. {On the other hand, because of its explicit dependence on $x_{1}$, the payoff
$\varphi_{\text{float}}$ is only $C^{1}_{B,\text{\rm loc}}$.}
\end{remark}
Even in the simplest case of constant volatility, 
i.e. in the Black\&Scholes model, both the marginal distribution of $A_t$ and the joint
distribution of $(S_t,A_t)$ are difficult to characterize analytically. The distribution of $A_t$
was given an integral representation in the pioneering work \cite{Yor1992a}, though that result is
of limited practical use in the valuation of Asian options. The approximation formulas that we
propose in this paper were applied heuristically in \cite{FPP}, where intensive numerical tests
were performed to confirm their accuracy. However, the general hypoelliptic framework that we
consider here clearly allows for several generalization, including more general dynamics and more
sophisticated Asian style-derivatives including stochastic local volatility models such as the CEV
and the Heston models \cite{heston1993}. An interesting example is also given by a generalized
type of Asian option, where the average is weighted w.r.t. the volume of traded assets: these
options are written on the \emph{Volume Weighted Average Price} (VWAP), a trading benchmark used
especially in pension plans (see, for instance, \cite{Novikov}). The dynamics of the traded volume
$V$ are lead by an additional stochastic factor that has to be chosen as to reflect the
corresponding volume statistics, and the average process $A$ is then given by
\begin{equation}
A_t =\frac{ \int_{0}^t S_{\tau} V_{\tau} d \tau} {\int_{0}^t V_{\tau} d \tau}.
\end{equation}


As it was previously argued, our technique makes use of the {intrinsic} Taylor polynomials in
\eqref{eq:tay_int} in order to be consistent with the subelliptic geometry induced by Kolmogorov
operators. This differentiates our approach from others appearing in the literature that are based
on classical Euclidean expansions. In the relevant paper \cite{Gobet2014_weak}, Malliavin calculus
techniques were employed to derive analytical approximations for the law of a general averaged
diffusion. When applied to the pricing of arithmetic Asian options, the approach in
\cite{Gobet2014_weak} returns an expansion whose leading term is the price of a geometric Asian
option. Correcting terms are computed by Taylor expanding the coefficients of the diffusion and
error estimates depend on standard Euclidean
regularity assumptions on the coefficients and on the payoff function. 
In 
\cite{TsaoLin} and \cite{Wojakowski}, the authors followed a different approach and carried out
a Taylor based-expansion of the joint distribution $(S_t,A_t)$ to analytically approximate
the price of an Asian option (possibly, {forward-starting}); this technique seems to be limited to
the Black\&Scholes dynamics. 
Other approximations, based on Taylor expansions and on Watanabe's
theory, can be found in \cite{KunitomoTakahashi1992}, 
though no rigorous error bounds are provided.

For sake of completeness, we also give a brief, and by no means exhaustive, overview of the
existing literature concerning other approaches to the pricing of Asian options.
Within the Black\&Scholes framework, \cite{GemanYor1992} derived an analytical expression for the
Laplace transform of $A_t$. However, several authors 
pointed out some stability issues related to the numerical inversion of the Laplace transform,
which lacks accuracy and efficiency in regimes of small volatility or short time-to-maturity. This
is also a disadvantage of the Laguerre expansion proposed in \cite{Dufresne2000}. \cite{Shaw2003}
used a contour integral approach based on Mellin transforms to improve the accuracy of the results
in the case of low volatilities, albeit at a higher computational cost. As opposed to numerical
inversion, \cite{Linetsky2004} derived an eigenfunction expansion of the transition density of
$A_t$ (see also \cite{Donati-Martin-2001}) by employing spectral theory of singular
Sturm-Liouville operators. Although it returns in general very accurate results, Linetsky's series
formula may converge slowly in the case of low volatility and become computationally expensive.
Note that, by opposite, the analytical pricing formulas we propose here do not suffer any lack of
accuracy or efficiency in these limiting cases. In actual fact, Theorem
\ref{th:error_estimates_taylor} and Remark \ref{rem:small_vol} show that the accuracy improves as volatility and/or time to
maturity get smaller. Again in the particular case of the Black\&Scholes model, and for special
homogeneous payoff functions, it is possible to reduce the pricing PDE in \eqref{equaz1} to a one
state variable PDE. PDE reduction techniques were initiated in \cite{Ingersoll} and applied to the
problem of pricing Asian options by several authors, including
\cite{RogersShi1995,Vecer2001} and \cite{DewynneShaw2008}. 
Eventually, other approaches include the
parametrix expansion in \cite{CorielliFoschiPascucci2010} and the moment-matching techniques in
\cite{Dufresne2001CEV,Deelstra,FusaiTagliani} and \cite{FordeJacquier2010} among others.

  \section{ Kolmogorov operators and intrinsic H\"older spaces}\label{sec:Kolmogorov}
In this section we collect some known facts about the intrinsic geometry of Kolmogorov operators.
We also recall the definition of intrinsic H\"older spaces and the Taylor formula recently proved in
\cite{PPP16}.
We consider the prototype Kolmogorov operator obtained by \eqref{Ac}-\eqref{Kc} with $A_{0}$ equal to a scalar $(p_{0}\times p_{0})$-matrix
and $a_{i}\equiv 0$, $i=1,\dots,p_{0}$, i.e. 
\begin{align}\label{eq:heatoperator}
 \Kc^{\Lambda}
 := \frac{\Lambda}{2}\sum_{i=1}^{p_0}\p^2_{x_{i}}+ \langle B x, \nabla_x  \rangle + \partial_t 
 , \qquad (t,x)\in  \R\times\mathbb{R}^d, \qquad \Lambda >0.
\end{align}
In this case we say that $\Kc^{\Lambda}$ is a {\it constant coefficients Kolmogorov operator}.
By Assumption \ref{assB}, the vector fields $\p_{x_{1}},\dots,\p_{x_{p_{0}}}$ and $Y$ in
\eqref{eq:ste001} satisfy the H\"ormander's condition and therefore $\Kc^{\Lambda}$ is hypoelliptic.
As it was first observed in \cite{LanconelliPolidoro1994}, $\Kc^{\Lambda}$ has remarkable invariance
properties with respect to the homogeneous Lie group 
$\mathcal{G}_B=\left(\Rdd,\circ,\left(D(\lambda)\right)_{\l>0}\right)$ where ``$\circ$" is the group law
defined as
\begin{equation}\label{eq:translation}
 (t,x)\circ (s ,\xi) = \left(t+s ,{e^{s  B}}x+\xi\right),\qquad (t,x),(s ,\xi)\in \Rdd,
\end{equation}
and $\left(D(\lambda)\right)_{\l>0}$ are the dilations given by
\begin{equation}\label{eq:dilation}
 D(\lambda)=\textrm{diag}\big(\lambda^2,\lambda I_{p_0},\lambda^{3}I_{p_1},\dots,\lambda^{2r+1}I_{p_r}\big),
\end{equation}
where $I_{p_j}$ denote the $(p_j\times p_j)$-identity matrices. Precisely, it was proved in
\cite{LanconelliPolidoro1994} that $\Kc^{\Lambda}$ is invariant with respect to the left
$\circ$-translations and homogeneous of degree two with respect to the dilations
$\left(D(\lambda)\right)_{\l>0}$. Notice that $\mathcal{G}_B$ is completely determined by the
matrix $B$; moreover, the identity element in $\mathcal{G}_B$ is ${\text{Id}=(0,0)}$ and the inverse is
$(t,x)^{-1}=\left(-t,{-e^{-tB}}x\right)$. For convenience, we also denote by
\begin{equation}\label{eq:dilation_zero}
 D_{0}(\lambda)=\textrm{diag}\big(\lambda I_{p_0},\lambda^{3}I_{p_1},\dots,\lambda^{2r+1}I_{p_r}\big),
\end{equation}
the ``spatial part'' of $D(\l)$.
A homogeneous norm on $\mathcal{G}_B$ is defined as follows:
\begin{equation}\label{eq:def_norm}
 \norm{(t,x)}_B=|t|^{1/2}+[x]_B,\qquad [x]_B:=\sum_{j=1}^d |x_j|^{1/\s_j},{\qquad (t,x)\in\R\times\R^d,}
\end{equation}
where $(\s_j)_{1\leq j\leq d}$ are the integers such that
\begin{equation}
\label{eq:dilation_exponents}
 D_{0}(\lambda)=\textrm{diag}\big(\lambda^{\s_1},\dots,\lambda^{\s_d} \big),
\end{equation}
{that is $\s_1=\cdots = \s_{p_0}=1$, $\s_{p_0+1}=\cdots =\s_{p_0+p_1}=3$ and so forth.}

In the general setting of homogeneous Lie groups, H\"older spaces and intrinsic Taylor polynomials
can be defined as in \cite{FollandStein1982} and \cite{Bonfiglioli2009}. For the particular case
of homogeneous Lie groups induced by Kolmogorov operators,  \cite{PPP16} provides a deeper
analysis of the intrinsic Taylor polynomials
under optimal regularity assumptions.

For any Lipschitz vector field $Z$ on $\Rdd$, we denote by $\d\mapsto e^{\d Z}(z)$ the integral
curve of $Z$ starting from $z$: in particular, we have
\begin{equation}\label{eq:def_curva_integrale_campo}
 e^{\d \p_{x_{i}} }(t,x)=(t,x+\delta \mathbf{e}_i),\quad i=1,\cdots,p_0,\qquad
 e^{\d Y }(t,x)=(t+\delta,e^{\delta B}x), 
\end{equation}
where $\mathbf{e}_i$ denotes the $i$-th element of the natural Euclidean basis of $\R^{d}$. We say
that a function $u$ is \emph{$Z$-differentiable} at $z$ if $\d\mapsto u\left(e^{\d Z }(z)\right)$
is differentiable at $0$ and in that case $\frac{d}{d \d} u\left(e^{\d Z }(z)\right)\big|_{\d=0}$
is referred to as the \emph{Lie derivative of $u$ at $z$ along $Z$}. Since the vector fields
$\p_{x_{1}},\dots,\p_{x_{p_{0}}}$ and $Y$ are $D(\l)$-homogeneous of degree one and two
respectively, we associate to $\p_{x_{1}},\dots,\p_{x_{p_{0}}}$ and $Y$ the {\it formal degrees}
one and two respectively. 
In general, if a Lipschitz vector field $Z$ has formal degree $\formaldeg_{Z}>0$ and
$u$ is a function on $\Rdd$, then for any $\a\in\,]0,\formaldeg_{Z}]$ we say that $u\in
C_{Z}^{\alpha}\equiv C_{Z}^{\alpha}(\Rdd)$ if the norm
\begin{equation}
 \norm{u}_{C^{\a}_{Z}}:=\sup_{\Rdd}|u|+
 \sup_{\d\in\R\setminus\{0\}\atop z\in\Rdd
 } \frac{
 \left|u\left(e^{\delta Z }(z)\right)-
 u(z)\right|}{|\delta|^{\frac{\alpha}{\formaldeg_{Z}}}}
\end{equation}
is finite. 
Now we define the intrinsic H\"older spaces on the homogeneous group $\mathcal{G}_B$.
\begin{definition}\label{def:C_alpha_spaces}
Let $\a\in\,]0,1]$ and $n\in\Nb$ with $n\ge2$, then:
\begin{itemize}
  \item [i)] $u\in C^{0,\a}_{B}$ if $u\in C^{\a}_{Y}$ and $u\in C^{\a}_{\p_{x_{i}}}$ for any
  $i=1,\dots,p_{0}$;
  \item [ii)] $u\in C^{1,\a}_{B}$ if $u\in
  C^{1+\a}_{Y}$ and $\p_{x_{i}}u\in C^{0,\a}_{B}$ for any
  $i=1,\dots,p_{0}$;
  \item [iii)] $u\in C^{n,\a}_{B}$ if 
  $Yu\in C^{n-2,\a}_{B}$ and $\p_{x_{i}}u\in
  C^{n-1,\a}_{B}$ for any $i=1,\dots,p_{0}$.
\end{itemize}
We also introduce the norms:
\begin{align}\label{e9}
  \norm{u}_{C^{0,\a}_{B}}&:=\norm{u}_{C^{\a}_{Y}}+\sum_{i=1}^{p_0}\norm{u}_{C^{\a}_{\partial_{x_i}}},\\ \label{e100}
  \norm{u}_{C^{1,\a}_{B}}&:=
  \norm{u}_{C^{1+\a}_{Y}}+\sum_{i=1}^{p_0}\norm{\partial_{x_i}u}_{C^{0,\a}_{B}},\\ \label{e11}
  \norm{u}_{C^{n,\a}_{B}}&:=
  \norm{Y u}_{C^{n-2,\a}_{B}}+\sum_{i=1}^{p_0} \norm{\partial_{x_i}u}_{C^{n-1,\a}_{B}}.
\end{align}
\end{definition}
\begin{remark}\label{rem:inclusions}
Notice that $C^{n+1,\a}_{B} \subseteq C^{n,\a}_{B}$ for any $n\in\N_{0}$.
\end{remark}
For any multi-index $\beta=(\beta_1,\cdots, \beta_d)\in \Ndzero$, 
we define the $B$-length of $\b$ as
\begin{equation}\label{hei}
 |\beta|_B:=\sum_{j=1}^d \s_{j}\b_{j},
\end{equation}
with $\s_{j}$ as in \eqref{eq:dilation_exponents}. We are now in position to state the intrinsic Taylor
theorem that was proved in \cite{PPP16}.
\begin{theorem}\label{th:main}
Let  $\alpha\in\,]0,1]$ and $n\in\Nzero$. If $u\in C^{n,\a}_{B}$ then the derivatives
\begin{align}\label{eq:ste31}
 Y^k \partial_x^{\beta}u\in C^{n-2k-|\beta|_B,\alpha}_B\qquad \text{ for }\ {0\leq 2 k + |\beta|_B \leq n},
\end{align}
exist and therefore, for any point $\z=(s,\xi)$, the \emph{$n$-th order $B$-Taylor polynomial}
$\TT_{n}(u,\z)(\cdot)$ in \eqref{eq:tay_int} is well defined. Moreover, we have
\begin{equation}\label{eq:estim_tay_n}
  \left|u(z)-\TT_{n}(u,\z)(z)\right|\le c_B \|u\|_{C^{n,\a}_{B}}  \|\z^{-1}\circ z\|_{B}^{n+\a}, \qquad {z,\z\in\Rdd,}
\end{equation}
where $c_B$ is a positive constant that only depends on $B$.
\end{theorem}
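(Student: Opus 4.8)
The plan is to proceed by induction on $n$, reducing the estimate at level $n$ to estimates at lower levels along the two families of generating directions, namely the ``horizontal'' fields $\p_{x_1},\dots,\p_{x_{p_0}}$ and the ``drift'' field $Y$. First I would treat the base cases. For $n=0$ the claim is essentially the definition: if $u\in C^{0,\a}_B$ then $u\in C^\a_Y$ and $u\in C^\a_{\p_{x_i}}$, and one recovers $|u(z)-u(\z)|\lesssim \|u\|_{C^{0,\a}_B}\|\z^{-1}\circ z\|_B^{\a}$ by writing the path from $\z$ to $z$ as a controlled concatenation of integral curves of $Y$ and of the $\p_{x_i}$ whose total ``$\|\cdot\|_B$-length'' is comparable to $\|\z^{-1}\circ z\|_B$ (this is the standard quasi-distance / connectivity fact on the homogeneous group $\mathcal G_B$, using that these vector fields, together with the iterated commutators encoded in the block structure of $B$ in Assumption \ref{assB}, span; cf. \cite{LanconelliPolidoro1994}). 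For $n=1$ one must additionally control the first-order increment: here the polynomial $\TT_1(u,\z)$ subtracts exactly the horizontal first derivatives $\p_{x_i}u(\z)$ and the $Y$-increment is of order two, so the remainder splits into a horizontal part, estimated by the fact that $\p_{x_i}u\in C^{0,\a}_B$, and a $Y$-part, estimated by $u\in C^{1+\a}_Y$.

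For the inductive step I would assume the statement holds up to order $n-1$ and prove it at order $n$. The existence of the derivatives $Y^k\p_x^\beta u\in C^{n-2k-|\beta|_B,\a}_B$ for $2k+|\beta|_B\le n$ follows directly from Definition \ref{def:C_alpha_spaces}(iii) by unwinding the recursion $Yu\in C^{n-2,\a}_B$, $\p_{x_i}u\in C^{n-1,\a}_B$, so $\TT_n(u,\z)$ is well defined. For the estimate \eqref{eq:estim_tay_n}, the key algebraic observation is a ``differentiation'' identity for intrinsic Taylor polynomials: applying $\p_{x_i}$ to $\TT_n(u,\z)(z)$ in the $z$-variable yields $\TT_{n-1}(\p_{x_i}u,\z)(z)$, and applying $Y$ (i.e. $\langle Bx,\nabla_x\rangle+\p_t$) yields $\TT_{n-2}(Yu,\z)(z)$, up to the correct re-centering dictated by the group translation — this is where the specific form of the increments $x-e^{(t-s)B}\xi$ and $(t-s)$ in \eqref{eq:tay_int} is essential, since they are precisely the components of $\z^{-1}\circ z$ and hence transform covariantly. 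Granting this, set $R(z):=u(z)-\TT_n(u,\z)(z)$; then $\p_{x_i}R(z)=\p_{x_i}u(z)-\TT_{n-1}(\p_{x_i}u,\z)(z)$ is, by the inductive hypothesis applied to $\p_{x_i}u\in C^{n-1,\a}_B$, bounded by $c_B\|\p_{x_i}u\|_{C^{n-1,\a}_B}\|\z^{-1}\circ z\|_B^{n-1+\a}\le c_B\|u\|_{C^{n,\a}_B}\|\z^{-1}\circ z\|_B^{n-1+\a}$, and similarly $YR(z)$ is bounded by $c_B\|u\|_{C^{n,\a}_B}\|\z^{-1}\circ z\|_B^{n-2+\a}$. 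One then integrates these derivative bounds back along the same $Y$-and-$\p_{x_i}$ path from $\z$ to $z$ used in the base case: since $R(\z)=0$ and the $\p_{x_i}$-direction contributes one unit of $\|\cdot\|_B$-weight while the $Y$-direction contributes two, each integration raises the exponent by exactly the right amount, and the concatenation produces the bound $\|\z^{-1}\circ z\|_B^{n+\a}$. The $D(\l)$-homogeneity of the whole construction guarantees the constant $c_B$ is uniform and depends only on $B$; alternatively one may first prove the estimate on the unit sphere $\{\|\z^{-1}\circ z\|_B=1\}$ and then rescale via $D(\l)$, which is the cleaner route for tracking the constant.

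The main obstacle, and the step deserving the most care, is the interplay between the \emph{re-centering} under the non-commutative group law and the \emph{path-integration} argument: when I move along an integral curve of $\p_{x_i}$ or of $Y$ starting at $\z$, the base point of the Taylor polynomial effectively shifts, and I must verify that $\TT_{n-1}(\p_{x_i}u,\z)$ is genuinely the $z$-derivative $\p_{x_i}\TT_n(u,\z)$ and not merely an approximation of it — this requires the identity $\p_{x_i}\big[(x-e^{(t-s)B}\xi)^\beta\big]=\beta_i (x-e^{(t-s)B}\xi)^{\beta-\mathbf e_i}$ together with $Y\big[(x-e^{(t-s)B}\xi)^\beta\big]$ reproducing the $(t-s)$-derivative structure, which is exactly the content of the group-invariance of the monomials. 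A secondary technical point is making rigorous the ``integrate the derivative bound along the path'' passage when $u$ is only in $C^{n,\a}_B$ (so the top derivatives are merely Hölder, not classical): this is handled by noting that, by the inductive hypothesis, $R$ is continuous and its $\p_{x_i}$- and $Y$-Lie derivatives exist and obey the stated pointwise bounds, so the fundamental theorem of calculus applies along each integral curve. Everything else — the combinatorial bookkeeping of multi-indices with the weight $|\beta|_B$, and the triangle-type inequality $\|\z^{-1}\circ z\|_B\le$ (sum of leg-lengths) — is routine given the homogeneous-group machinery recalled in Section \ref{sec:Kolmogorov}.
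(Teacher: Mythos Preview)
The paper does not contain a proof of Theorem \ref{th:main}: the statement is quoted verbatim from \cite{PPP16} (see the sentence immediately preceding the theorem, ``We are now in position to state the intrinsic Taylor theorem that was proved in \cite{PPP16}''), so there is no in-paper argument to compare your proposal against.

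That said, your strategy is the natural one and is essentially the approach taken in \cite{PPP16}: induction on $n$, the differentiation identities $\p_{x_i}\TT_n(u,\z)=\TT_{n-1}(\p_{x_i}u,\z)$ and $Y\TT_n(u,\z)=\TT_{n-2}(Yu,\z)$, and integration of the resulting bounds on $\p_{x_i}R$ and $YR$ along a controlled path connecting $\z$ to $z$. Two points deserve more care than you give them. First, the $Y$-identity is not immediate from the formula \eqref{eq:tay_int} because $Y$ and $\p_{\xi}^{\beta}$ do not commute (the commutator $[Y,\p_{x_i}]$ is a non-horizontal field for $i\le p_0$); the identity does hold, but it comes out only after one also accounts for the action of $\langle Bx,\nabla_x\rangle$ on the monomials $(x-e^{(t-s)B}\xi)^{\beta}$, which produces compensating terms of lower $|\cdot|_B$-weight. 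Second, the ``path-integration'' step requires a connectivity lemma asserting that any $w\in\mathcal G_B$ can be reached from the identity by a concatenation of integral curves of $Y$ and of $\p_{x_1},\dots,\p_{x_{p_0}}$ whose parameters are each bounded by $c_B\|w\|_B^{m}$ (with $m$ equal to the formal degree of the corresponding field) \emph{and} such that every intermediate point along the path lies in the ball $\{\|\cdot\|_B\le c_B\|w\|_B\}$; without the latter you cannot control $\|\z^{-1}\circ\cdot\|_B^{n-2+\a}$ along the path. This is the step where the block structure of $B$ in Assumption \ref{assB} is really used, and it is the technical heart of the argument in \cite{PPP16}.
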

Definition \ref{def:C_alpha_spaces} and Theorem \ref{th:main} will be used in the next section,
respectively, to specify suitable regularity conditions on the coefficients of $\Kc$ in
\eqref{Ac}-\eqref{Kc}, and to expand them as in \eqref{e1_ter}. However, as anticipated in Section
\ref{sec:intro_expansions}, the intrinsic regularity of the terminal datum $\phi$ plays as well a
key role in the error analysis of the expansion \eqref{eq:asympt_exp}. This motivates the
following
\begin{definition}\label{Calpha}
Let $k\in\,]0,2r+1]$. We denote by $C_{B}^{k}(\R^{d})$ the space of functions $\phi$ on $\R^{d}$
such that
\begin{equation}
 \left|\phi(x)-\phi(y)\right|\le C[x-y]_{B}^{k},\qquad x,y\in\R^{d},
\end{equation}
for some positive constant $C$, where $[\cdot]_{B}$ is the norm on $\R^{d}$ defined in
\eqref{eq:def_norm}. We also set
\begin{equation}\label{normC}
  \|\phi\|_{C^{k}_{B}(\R^{d})}=\sup_{x\neq y}\frac{\left|\phi(x)-\phi(y)\right|}{[x-y]_{B}^{k}}.
\end{equation}
Moreover, by convention, $C^{0}_{B}(\R^{d})$ is the set of bounded and continuous functions on
$\R^{d}$ and $\|\phi\|_{C^{0}_{B}(\R^{d})}=\|\phi\|_{L^{\infty}(\R^{d})}$.
\end{definition}
\begin{example}\label{exreg}
Consider the case of arithmetic Asian options with fixed strike discussed in Section
\ref{sec:asian}, i.e.
\begin{equation}
B = \begin{pmatrix}
   0 & 0 \\
   1 & 0
  \end{pmatrix},\qquad \varphi_{\text{fixed}}(x_{1},x_{2}) = \left(x_{2}/T - K\right)^+.
\end{equation}
According to Definition \ref{Calpha}, $\phi_{\text{fixed}} \in C^{3}_{B}(\R^{2})$ even if it is
only Lipschitz continuous in the Euclidean sense.
\end{example}
\section{Approximate solutions and error bounds}\label{sec:approximating1}
Let $X$ be a Feller process as defined in the introduction: in particular, we assume that the
infinitesimal generator of $X$ coincides with operator $\Ac$ in \eqref{Ac} on a fixed domain $D$
of $\R^{d}$. Moreover, $\Ac$ satisfies Assumptions \ref{assA} and \ref{assB}. Throughout this
section $N\in \N_0$ and $T>0$ are fixed and we also require the following assumptions to be in
force:
\begin{assumption}\label{assC}
The coefficients $\at_{ij},\at_{i}$ of $\Ac$ belong to $\in C_{B}^{N,1}$ and
\begin{equation}\label{condM}
  \norm{a_{ij}}_{C^{N,1}_{B}}, \norm{a_{i}}_{C^{N,1}_{B}}\leq M,
\end{equation}
with $M$ as in \eqref{cond:parabolicity}. 
\end{assumption}
\begin{assumption}\label{assD}
The final datum $\phi$ is a continuous function with sub-exponential growth such that $u=u(t,x)$ in \eqref{eq:cond_expect} is
well defined and belongs to $L^{\infty}([0,T]\times D)$. Moreover, there exists $\psi\in
C_{B}^{k}(\R^{d})$, with $k\in[0,2r+1]$, such that $\phi=\psi$ on $D$.
\end{assumption}
\noindent 
The following preliminary result can be proved as in \cite{JansonTysk2006} or \cite{PP15}, using the Schauder estimates and the results on Green functions proved in \cite{Francesco}. 
\begin{proposition}\label{prop:solution_cauchy_pb}
{Let Assumptions \ref{assA}, \ref{assB}, \ref{assC} and  \ref{assD} be in force. 
Then, $u \in C([0,T]\times D) \cap C^{N+2,1}_{B,\text{\rm loc}}$ and satisfies \eqref{equaz1}.} 
\end{proposition}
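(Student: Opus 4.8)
The plan is to verify that $u$ solves \eqref{equaz1} in a suitable weak (say, distributional or viscosity) sense, and then upgrade that solution to the claimed intrinsic regularity class $C^{N+2,1}_{B,\text{\rm loc}}$ by means of interior Schauder-type estimates for the Kolmogorov operator $\Kc$. First I would recall that, since $X$ is a Feller process whose generator coincides with $\Ac$ on $D$, the conditional expectation $u(t,x)=\Eb_{t,x}[\varphi(X_T)]$ satisfies the backward equation $\Kc u = 0$ on $[0,T[\,\times D$ in the distributional sense; this is a standard consequence of the martingale property of $(u(t,X_t))_{t}$ together with It\^o's formula applied to smooth test functions, exactly as carried out in \cite{JansonTysk2006} and \cite{PP15}. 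The terminal condition $u(T,\cdot)=\varphi$ on $D$, and the continuity of $u$ up to $t=T$, follows from the Feller property and the sub-exponential growth assumption on $\phi$, plus the Gaussian-type upper bounds available for the transition density of $X$ (used here to control the contribution of the paths that exit $D$ before time $T$); Assumption \ref{assD} guarantees that $u\in L^\infty([0,T]\times D)$, so no boundary blow-up interferes.

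Next I would localize: fix an arbitrary compact $K\Subset [0,T[\,\times D$ and a slightly larger relatively compact open set $K'$ with $K\Subset K'\Subset [0,T[\,\times D$. On $K'$ the coefficients $\at_{ij},\at_i$ belong to $C^{N,1}_B$ by Assumption \ref{assC}, and $A_0$ is uniformly non-degenerate by Assumption \ref{assA}, so $\Kc$ restricted there is (after the harmless smooth perturbation of $\at_{ij}$ outside a neighbourhood that is mentioned in the introduction) a Kolmogorov operator satisfying the H\"ormander condition globally, with coefficients of the required intrinsic H\"older regularity. One then invokes the interior Schauder estimates in the intrinsic spaces $C^{n,\alpha}_B$ — precisely the estimates of \cite{Francesco} together with the Green-function construction there — which give: a bounded distributional solution of $\Kc u=0$ on $K'$ is automatically in $C^{N+2,1}_{B}(K)$, with a bound on $\|u\|_{C^{N+2,1}_B(K)}$ in terms of $\sup_{K'}|u|$ and the $C^{N,1}_B$-norms of the coefficients. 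Since $K$ was arbitrary, this yields $u\in C^{N+2,1}_{B,\text{\rm loc}}([0,T[\,\times D)$. Combined with the continuity up to $t=T$ established in the first step, we get $u\in C([0,T]\times D)\cap C^{N+2,1}_{B,\text{\rm loc}}$, and the equation $\Kc u=0$ holds now in the classical (pointwise) sense on $[0,T[\,\times D$, which is \eqref{equaz1}.

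The main obstacle I anticipate is the bootstrap from a merely distributional (or probabilistic) solution to full intrinsic smoothness of order $N+2$: one must check that the Schauder theory of \cite{Francesco} is applicable in exactly this local, non-periodic setting, that the intrinsic H\"older scale $C^{n,\alpha}_B$ of Definition \ref{def:C_alpha_spaces} matches the scale in which those estimates are phrased, and that the unbounded drift $\langle Bx,\nabla_x\rangle$ causes no trouble on the (bounded) set $K'$ — it does not, because on a bounded set the linear drift is Lipschitz with bounded coefficients, and the intrinsic estimates are precisely designed to absorb it. A secondary technical point is the passage to the terminal time: one has to argue that $u(t,\cdot)\to\varphi$ locally uniformly on $D$ as $t\to T^-$, which again uses the Feller continuity of $X$, the local uniform continuity of $\varphi=\psi\in C^k_B$, and a dominated-convergence argument controlled by the sub-exponential growth bound. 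Once these two points are in place the proposition follows by assembling the pieces, exactly along the lines of \cite{JansonTysk2006} and \cite{PP15}.
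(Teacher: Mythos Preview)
Your proposal is correct and follows exactly the route indicated by the paper: the authors do not give a proof but simply state that the result ``can be proved as in \cite{JansonTysk2006} or \cite{PP15}, using the Schauder estimates and the results on Green functions proved in \cite{Francesco},'' and your sketch is precisely a fleshing-out of that reference chain. There is no alternative argument in the paper to compare against.
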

As was mentioned in the introduction, the idea behind our approximation of $u=u(t,x)$ in
\eqref{eq:cond_expect} is to expand the generator of $X$ by approximating the coefficients
$a_{ij}$ and $a_j$ in \eqref{Ac} by
means of their \emph{intrinsic} Taylor polynomials in \eqref{eq:tay_int}. 
Thus we fix $\bar{z}=(\bar{t},\bar{x})\in \Rdd$ and consider the sequence
$\big(\Kc^{(\zb)}_n\big)_{0\le n\le N}$ in \eqref{e1_ter}.
We recall that, by Assumptions \ref{assA} and \ref{assB}, 
$\Kc^{(\bar{z})}_0$ in \eqref{K0} has a fundamental solution $\Gamma^{(\bar{z})}_{0}$ that is the
$d$-dimensional Gaussian density
\begin{equation}\label{Gamma0}
 \Gamma^{(\bar{z})}_{0}(t,x;T,y) =  \frac{1}{  \sqrt{(2\pi)^{d}|\Cv_{\bar{z}}(T-t)|} }
    \exp\left(-\frac{1}{2}\langle\Cv_{\bar{z}}^{-1}(T-t) (y - e^{(T-t)B}x)),
    (y -e^{(T-t)B}x)\rangle\right)
\end{equation}
with covariance matrix $\Cv_{\bar{z}}(t)$ 
given by
\begin{align}\label{eq:covariance_mean}
 \Cv_{\bar{z}}(t)=  \int_0^t e^{sB}A(\bar{z})e^{sB^*} d s,\qquad 
 A(\zb):=\begin{pmatrix}
    A_{0}(\zb) & 0_{p_0\times (d-p_0)} \\
    0_{(d-p_0)\times p_0} & 0_{(d-p_0)\times (d-p_0)}.
  \end{pmatrix}.
\end{align}
Next we formally expand the expected value $u$ in \eqref{eq:cond_expect} as
\begin{align}
  u  \ \approx\ U^{(\bar{z})}_N := \sum_{n=0}^N u^{(\bar{z})}_n. \label{eq:v.expand}
\end{align}
Inserting \eqref{e1_ter}, \eqref{eq:v.expand} into \eqref{equaz1} and formally collecting terms of the same
order, we find that the functions $u^{(\bar{z})}_{n}$ satisfy the following sequence of nested
Cauchy problems
\begin{align}\label{eq:v.0.pide}
 &\begin{cases}
 \Kc^{(\bar{z})}_0 u^{(\bar{z})}_0 =  0,\qquad & \text{on } [0,T[\times \Rd  
 , \\
 u^{(\bar{z})}_0(T,\cdot) =  \phi,&  \text{on }\mathbb{R}^d,
\end{cases}
\intertext{and} \label{eq:v.n.pide}
 &\begin{cases}
  \Kc^{(\bar{z})}_0 u^{(\bar{z})}_n  =  - \sum\limits_{h=1}^{n} \big(\Kc^{(\bar{z})}_h - \Kc^{(\bar{z})}_{h-1} \big) u^{(\bar{z})}_{n-h},
  \qquad & \text{on } [0,T[\times \Rd  
  , \\
 u^{(\bar{z})}_n(T,\cdot) =  0, &   \text{on }\mathbb{R}^d . 
\end{cases}
\end{align}
The explicit representation of the terms $u_{n}^{(\zb)}$ of the expansion is given in Theorem
\ref{th:un_general_repres}.
\begin{remark}
In the above construction, the approximation in \eqref{eq:v.expand} is defined in terms of a
sequence of Cauchy problems that admit a unique non-rapidly increasing solution.
Conversely, equations \eqref{equaz1} do not have a unique solution unless additional lateral boundary
conditions are posed. Nevertheless, Theorem \ref{th:error_estimates_taylor} below states that the
above expansion is asymptotically convergent in the limit of short-time
,
uniformly on compact subsets of $D$.  
This is in line with the so-called \emph{principle of not feeling the boundary} (cf. \cite{Hsu},
\cite{Gathera2012}). Basically, the same asymptotic result would hold for any bounded solution of
equations \eqref{equaz1}, with error bounds depending on the $L^{\infty}$-norm of the solution. Of
course, knowing the boundary conditions 
would allow to construct an approximate sequence that is also accurate near the boundary; this is the case of barrier options in the financial applications.
\end{remark}
The choice of the basis point $\bar{z}$ is somewhat arbitrary, but only some particular choices
allow for performing a rigorous error analysis. For instance, here below we consider the case
$\bar{z}=z=(t,x)$. However, although we omit to write separate proofs, the same results hold by
setting $\bar{z}=(T,x)$. In the following statement, we put
\begin{align}\label{eq:def_ubar_N}
 U_N(z):=U^{(z)}_N(z), \qquad z\in [0,T]\times D,
\end{align}
with $U^{(z)}_N$ defined by \eqref{eq:v.expand}-\eqref{eq:v.0.pide}-\eqref{eq:v.n.pide}.
{\begin{theorem}\label{th:error_estimates_taylor} Let Assumptions \ref{assA}, \ref{assB},
\ref{assC} and \ref{assD} be in force. 
Then for any compact subset $K$ of $D$, we have
\begin{align}\label{eq:error_estimate}
 \left|u(t,x)-U_N(t,x)\right|  \le C (T-t)^{\frac{N+k+1}{2}},\qquad
 (t,x)\in[0,T]\times K,
\end{align}
where $C$ is a positive constant that depends only on
$M,\m,B,T,N,K,\|\psi\|_{C_{B}^{k}(\R^{d})}$ and $\|u\|_{L^{\infty}([0,T]\times D)}$. 
\end{theorem}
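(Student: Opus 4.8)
The plan is to split the argument into the global case $D=\R^d$ (Theorem \ref{th:error_estimates_taylor_global}) and the localization step, exactly as announced in the introduction. First I would treat the global case. Here the key is a representation formula for the remainder $u - U_N$. Starting from the PDE $\Kc u = 0$ and the nested Cauchy problems \eqref{eq:v.0.pide}--\eqref{eq:v.n.pide} satisfied by the $u_n^{(\zb)}$, one applies Duhamel's principle to the constant-coefficient operator $\Kc_0^{(\zb)}$, whose fundamental solution is the explicit Gaussian $\Gamma_0^{(\zb)}$ in \eqref{Gamma0}. Writing $R_N := u - U_N^{(\zb)}$ and using $\Kc u = 0$, one finds that $\Kc_0^{(\zb)} R_N$ equals a sum of terms of the form $\big(\Kc^{(\zb)} - \Kc_N^{(\zb)}\big)$ applied to $u$ plus lower-order coupling terms $\big(\Kc_h^{(\zb)} - \Kc_{h-1}^{(\zb)}\big) u_{n-h}^{(\zb)}$; the crucial point is that, by the Taylor estimate \eqref{eq:estim_tay_n} applied to the coefficients $a_{ij}, a_i \in C_B^{N,1}$, the error coefficients $a_{ij} - \TT_N(a_{ij},\zb)$ are controlled by $\|\zb^{-1}\circ z\|_B^{N+1}$, and this homogeneous-norm factor combines with the second derivatives $\partial_{x_i x_j} u$ (which exist and are bounded on compacts by Proposition \ref{prop:solution_cauchy_pb}).

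The second ingredient in the global step is the set of \emph{symmetry / scaling properties} of $\Gamma_0^{(\zb)}$: integrals against the Gaussian kernel of monomials in the intrinsic increments $\big(x - e^{(t-s)B}\xi\big)^\beta (t-s)^k$ scale like $(T-t)^{(2k+|\beta|_B)/2}$ under the dilations $D(\lambda)$. Choosing $\zb = z$, so that the basis point tracks the evaluation point, kills the zeroth-order increment and makes the leading surviving term in $R_N$ carry a factor $(T-t)^{(N+1)/2}$. The final factor $(T-t)^{k/2}$ comes from the datum: since $\phi = \psi \in C_B^k$, the difference between $u_0^{(\zb)}$ evaluated with $\psi$ versus its intrinsic-Hölder modulus, transported by $\Gamma_0^{(\zb)}$, contributes an extra $[\cdot]_B^k$-increment whose Gaussian integral is of order $(T-t)^{k/2}$; this is where one uses the Gaussian upper bounds for the transition density of $X$ together with the reproducing property, rather than pointwise smoothness of $\phi$. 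Iterating the Duhamel representation through the $N$ nested problems, each step contributing one more half-power, and collecting, yields the bound $C(T-t)^{(N+k+1)/2}$ with $C$ depending on $M,\m,B,T,N$ and $\|\psi\|_{C_B^k}$.

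For the localization step, the point is that Proposition \ref{prop:solution_cauchy_pb} only gives $u \in C^{N+2,1}_{B,\loc}$ and $\Kc u = 0$ on $D$, not on all of $\R^d$. Following the Safonov-type argument of \cite{Safonov1998} adapted in \cite{Cinti2009135}, one introduces a dummy diffusion $\widetilde X$ generated by a globally-defined operator $\widetilde\Ac$ agreeing with $\Ac$ on $D$ (obtained by perturbing the coefficients outside $D$ while keeping Assumptions \ref{assA}--\ref{assC}), and for a compact $K \Subset D$ one estimates, via the Gaussian bounds for $\widetilde X$, the probability that the process exits $D$ before $T$ starting from $x \in K$: this is exponentially small in $(T-t)$, i.e. $O\big(\exp(-c\,\operatorname{dist}_B(K,\partial D)^2/(T-t))\big)$, which is negligible compared to any polynomial power of $(T-t)$. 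Splitting $u(t,x) = \Eb_{t,x}[\psi(\widetilde X_T)] + (\text{exit correction})$, applying the global Theorem \ref{th:error_estimates_taylor_global} to the first term, and absorbing the exponentially small exit term into the error, gives \eqref{eq:error_estimate} with the stated constant dependence, now also on $K$ and on $\|u\|_{L^\infty([0,T]\times D)}$ through the boundary contribution.

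The main obstacle I expect is the bookkeeping in the global Duhamel iteration: one must track, through all $N$ nested Cauchy problems, both the order of the intrinsic increments produced by each coefficient Taylor remainder \emph{and} the order of the derivatives of $u$ (resp. of the $u_n^{(\zb)}$) that those increments multiply, and verify that the total $B$-homogeneous degree always adds up to at least $N+1+k$ after the Gaussian integrations — with the $C_B^{N,1}$ regularity of the coefficients being exactly, and only just, enough (this is the content of the remark that intrinsic regularity requirements are weaker than Euclidean ones when $p_0 < d$). The treatment of the low-regularity datum $\phi = \psi \in C_B^k$ via Gaussian bounds, rather than differentiation, is the secondary delicate point, since the naive Taylor-of-the-datum bound is unavailable.
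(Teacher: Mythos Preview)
Your overall architecture---global estimate plus Safonov-type localization---matches the paper's, and your localization sketch is close to correct (modulo one missing term; see below). The global step, however, contains a genuine gap.

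You propose to apply Duhamel with the \emph{constant-coefficient} kernel $\Gamma_0^{(\zb)}$ and assert that $\Kc_0^{(\zb)} R_N$ contains a term $(\Kc - \Kc_N^{(\zb)})\, u$ whose coefficients decay like $\|\zb^{-1}\circ z\|_B^{N+1}$. A direct computation from \eqref{eq:v.0.pide}--\eqref{eq:v.n.pide} and $\Kc u=0$ gives instead
\begin{align}
\Kc_0^{(\zb)} R_N \;=\; (\Kc_0^{(\zb)} - \Kc)\, u \;+\; \sum_{m=0}^{N-1} (\Kc_{N-m}^{(\zb)} - \Kc_0^{(\zb)})\, u_m^{(\zb)},
\end{align}
so the term hitting $u$ carries the \emph{zeroth}-order coefficient error $a_{ij}(\zb)-a_{ij}(\cdot)$, of size $O(\|\zb^{-1}\circ z\|_B)$, not $O(\|\zb^{-1}\circ z\|_B^{N+1})$. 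After one $\Gamma_0^{(\zb)}$-Duhamel step this yields only a factor $(T-t)^{1/2}$, and it moreover requires global bounds on $\partial_{x_ix_j}u$ of order $(T-t)^{(k-2)/2}$, which Proposition \ref{prop:solution_cauchy_pb} does not supply. The paper avoids both obstacles by computing instead $\Kc R_N = -\sum_{n=0}^N (\Kc - \Kc_n^{(\zb)})\, u_{N-n}^{(\zb)}$ and applying Duhamel with the \emph{true} fundamental solution $\Gamma$ of $\Kc$, for which Proposition \ref{lem:gaussian_estimates} provides Gaussian bounds. In the resulting representation \eqref{eq:m13_1}--\eqref{eq:m13_2} the Taylor remainders $(\Kc - \Kc_n^{(\zb)})$ act only on the approximants $u_{N-n}^{(\zb)}$, whose derivatives are explicitly controlled by Proposition \ref{prop:der_un}; no derivative of $u$ ever appears, and the powers $(s-t)^{(n+1)/2}$ and $(T-s)^{(k-2+N-n)/2}$ combine directly to $(T-t)^{(N+k+1)/2}$.

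For the localization, the paper constructs a barrier (Lemma \ref{lem:estimate_short_cylinder}) and uses the three-term splitting $|u-U_N|\le |u-u^{\psi}|+|u^{\psi}-U_N^{\psi}|+|U_N^{\psi}-U_N|$. Your exit-time argument handles the analogues of the first two terms, but you do not address the third: $U_N$ is built from $\phi$ while $U_N^{\psi}$ is built from $\psi$, and these differ off $D$. The paper bounds $D_x^{\alpha}\big(u_0^{(\zb),\psi}-u_0^{(\zb)}\big)$ via the Gaussian estimates on $\Gamma_0^{(\zb)}$, propagates this through the explicit structure of $\Lc_n^{(\zb)}$ (Lemma \ref{lem:Ln}), and then reapplies the barrier argument to the resulting auxiliary function.
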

}
\noindent Theorem \ref{th:error_estimates_taylor} will be proved in Section
\ref{sec:local_estimate}.
\begin{remark}
As shown in Example \ref{exreg}, for a fixed-strike Asian option we have $\phi \in
C^{3}_{B}(\R^{2})$ and therefore we get $(T-t)^{\frac{N+4}{2}}$ in the error estimate
\eqref{eq:error_estimate}. This is coherent with the previous results proved in
\cite{Gobet2014_weak} in the scalar case for $N\le 2$, 
and sheds some light on why the order of convergence of Asian call options is improved w.r.t.
their European counterparts, for which the error is of order $(T-t)^{\frac{N+2}{2}}$. When placed
within our framework, this improvement of convergence can be seen as part of a wider phenomenon
related to the intrinsic geometry of Kolmogorov operators.
\end{remark}
\begin{remark}\label{rem:small_vol}
If the coefficients $a_{ij}$, $a_i$ only depend on the first $p_0$ variables, then it is possible to prove the error bounds in \eqref{eq:error_estimate} to be also asymptotic in the limit of small $M$. Precisely,
\begin{align}
 \left|u(t,x)-U_N(t,x)\right|  \le C \big(M(T-t)\big)^{\frac{N+k+1}{2}},\qquad
 (t,x)\in[0,T]\times K,
\end{align}
with $C$ independent of $M$ as $M\to 0^+$. This is the case, for instance, of classical volatility models for Asian options 
where the volatility coefficient depends at most on the underlying asset $S_t$ (local volatility)
and on some exogenous factors (stochastic volatility), but not on the average process $A_t$.
\end{remark}
In the global case, when $D=\Rd$, we have some stronger results. Aside from the error bounds in
\eqref{eq:error_estimate} becoming global in space, we are also able to obtain analogous
asymptotic error bounds for the transition density of $X$. We start by observing that when
$D=\Rd$ our assumptions imply that $X$ has a transition density $\G$ that coincides with the
fundamental solution of $\Kc$ as in \eqref{Ac}-\eqref{Kc} (see, for instance, \cite{Polidoro1994}). 
We denote by $\barG_{N}$ the {$N$-th order approximation of $\G$} defined as
\begin{align}
 \barG_{N}(t,x;T,y)=\sum_{n=0}^N {u}_{n}(t,x;T,y) \qquad 0\leq t<T,\ x,y\in\Rd , \label{eq:new_b}
\end{align}
where ${u}_{0}(t,x;T,y) = \Gamma_{0}^{(t,x)}(t,x;T,y)$ in \eqref{Gamma0},
and the correcting terms 
${u}_{n}(t,x;T,y)$ are defined recursively by \eqref{eq:v.n.pide} with $\zb=(t,x)$. 
We have the following
\begin{theorem}\label{th:error_estimates_taylor_global} Let Assumptions \ref{assA},
\ref{assB}, \ref{assC} and \ref{assD} be in force with $D=\R^{d}$. 
Then, we have
\begin{align}\label{eq:error_estimate_th}
 \left|u(t,x)-U_N(t,x)\right| \le C (T-t)^{\frac{N+k+1}{2}}, \qquad
 (t,x)\in[0,T]\times \R^{d},
\end{align}
where $C$ depends only on $M,\m,B, T,N$ and $\|\phi\|_{C_{B}^{k}(\R^{d})}$. Moreover, for any
$c>1$, we have
\begin{align}\label{th:error_estim_fund_solution}
 \left|\Gamma(t,x;T,y)-\barG_{N}(t,x;T,y)\right|  \leq  C(T-t)^{\frac{N+1}{2}}\Gamma^{cM}(t,x;T,y),\qquad (t,x)\in[0,T[\times
 \R^{d},
\end{align}
where, for any $\Lambda>0$, $\Gamma^{\Lambda}$ denotes the fundamental solution of the constant-coefficient Kolmogorov operator $\Kc^{\Lambda}$ as defined in \eqref{eq:heatoperator},
and $C$ is a positive constant that depends only on $M,\m,B, T,N$ and $c$.
\end{theorem}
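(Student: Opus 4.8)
The plan is to treat the two estimates together, since the density bound \eqref{th:error_estim_fund_solution} is the heart of the matter and \eqref{eq:error_estimate_th} follows from it by integration against $\phi$. First I would set up the recursive structure: by the representation in Theorem \ref{th:un_general_repres}, each correction term $u_n(t,x;T,y)$ is obtained from Duhamel's principle applied to \eqref{eq:v.n.pide}, namely $u_n = \int_t^T \int_{\R^d} \Gamma_0^{(t,x)}(t,x;s,\eta)\, g_n(s,\eta;T,y)\, d\eta\, ds$, where $g_n$ collects the terms $-(\Kc_h^{(\zb)}-\Kc_{h-1}^{(\zb)})u_{n-h}$; each operator difference $\Kc_h^{(\zb)}-\Kc_{h-1}^{(\zb)}$ has coefficients that are the $B$-homogeneous parts of the intrinsic Taylor polynomials of the $a_{ij},a_i$, hence are polynomials in $\zb^{-1}\circ z$ of $B$-degree exactly $h$ (for the second-order part) or $h+1$ (for the drift part), multiplied by second/first order derivatives in the $x_1,\dots,x_{p_0}$ directions. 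The key structural input is then: (i) the Gaussian bound $\Gamma_0^{(t,x)}(t,x;T,y)\le C\,\Gamma^{cM}(t,x;T,y)$, uniform in the base point, which follows from Assumption \ref{assA} and the explicit covariance \eqref{eq:covariance_mean}; (ii) the scaling/reproduction properties of $\Gamma^{\Lambda}$ under $\mathcal{G}_B$, in particular the fact that $\Gamma^{\Lambda}$ is $D(\lambda)$-homogeneous of degree $-Q$ (with $Q=\sum_j\sigma_j$ the homogeneous dimension of the spatial part) so that differentiating it along $\partial_{x_i}$ costs a factor $(T-t)^{-1/2}$ and along $Y$ costs $(T-t)^{-1}$, while multiplying by a monomial of $B$-degree $\ell$ in the increment $y-e^{(T-t)B}x$ gains a factor $(T-t)^{\ell/2}$; (iii) the Chapman–Kolmogorov-type reproduction inequality $\int \Gamma^{\Lambda}(t,x;s,\eta)\Gamma^{\Lambda}(s,\eta;T,y)\,d\eta = \Gamma^{\Lambda}(t,x;T,y)$, used to absorb the time-convolutions.

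Next I would run an induction on $n$ to prove the pointwise bound
\begin{equation}\label{eq:plan_dens_ind}
 \left|u_n(t,x;T,y)\right| \le C_n (T-t)^{\frac{n}{2}}\Gamma^{cM}(t,x;T,y),\qquad 0\le t<T,\ x,y\in\R^d,
\end{equation}
for every $n\ge 0$. The base case $n=0$ is (i). For the inductive step, one inserts the representation of $u_n$, replaces $\Gamma_0^{(t,x)}$ by $C\Gamma^{cM}$, and bounds $g_n$: a typical term is a monomial of $B$-degree $h$ (resp. $h+1$) in the increment times $\partial_{x_i x_j}u_{n-h}$ (resp. $\partial_{x_i}u_{n-h}$). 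Using the inductive bound on $u_{n-h}$, the derivatives hitting $\Gamma^{cM}$ produce $(T-s)^{-1}$ (two $x$-derivatives) against the monomial's gain $(T-s)^{h/2}$, yielding a net $(s-t)^{\frac{n-h}{2}}(T-s)^{\frac{h}{2}-1}$ times $\Gamma^{cM}(t,x;s,\eta)\Gamma^{cM}(s,\eta;T,y)$; the $\eta$-integral reproduces $\Gamma^{cM}(t,x;T,y)$ and the $ds$-integral of $(s-t)^{\frac{n-h}{2}}(T-s)^{\frac{h}{2}-1}$ over $[t,T]$ is a Beta integral equal to a constant times $(T-t)^{\frac{n}{2}}$ (here one uses $h\ge 1$, so the exponent $\frac{h}{2}-1 > -1$). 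The drift terms are handled identically with one $x$-derivative and a monomial of $B$-degree $h+1$. A mild technical point: to keep $c$ fixed across the induction while repeatedly replacing $\Gamma_0$ by a Gaussian with a slightly larger constant, one starts from some $c_0>1$ and notes the argument only needs finitely many steps $0,\dots,N$, so a single $c>c_0$ works after enlarging the multiplicative constant; alternatively one carries a chain $c_0<c_1<\cdots<c_N=c$.

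Once \eqref{eq:plan_dens_ind} is established, summing over $n=0,\dots,N$ is not quite enough for \eqref{th:error_estim_fund_solution} because one needs the \emph{remainder}, not the tail of the series; so the final step is to show that $\Gamma - \barG_N$ satisfies, in the sense of the Duhamel/parametrix identity, $\Gamma - \barG_N = \int_t^T\int_{\R^d}\Gamma_0^{(t,x)}(t,x;s,\eta)\,\big((\Kc - \Kc^{(t,x)}_N)\Gamma + \text{(already-expanded lower order)}\big)\,d\eta\,ds$, i.e. the error solves a Cauchy problem whose source is controlled by the $(N+1)$-st order intrinsic Taylor remainder of the coefficients. Invoking the Taylor estimate \eqref{eq:estim_tay_n} from Theorem \ref{th:main} together with Assumption \ref{assC} ($a_{ij},a_i\in C^{N,1}_B$ with norm $\le M$), this source is bounded by $C\|\zb^{-1}\circ z\|_B^{N+1}$ times second/first $x$-derivatives of $\Gamma$ (for which one uses known Gaussian bounds on the fundamental solution of $\Kc$, e.g. from \cite{Polidoro1994, DiFrancescoPascucci2005}); the very same scaling-and-reproduction computation as above then yields $(T-t)^{\frac{N+1}{2}}\Gamma^{cM}(t,x;T,y)$. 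Finally, \eqref{eq:error_estimate_th} is obtained by writing $u(t,x)-U_N(t,x) = \int_{\R^d}\big(\Gamma(t,x;T,y)-\barG_N(t,x;T,y)\big)\phi(y)\,dy$, plugging in \eqref{th:error_estim_fund_solution}, and using $\phi\in C^k_B(\R^d)$: pairing the extra $[y-e^{(T-t)B}x]_B^{k}$ (the modulus of continuity of $\phi$, after subtracting the constant $\phi(e^{(T-t)B}x)$ which integrates against the density's $n\ge 1$ corrections to produce higher-order terms) against the Gaussian $\Gamma^{cM}$ gains exactly $(T-t)^{k/2}$. \textbf{The main obstacle} I anticipate is the bookkeeping in this last step — making the remainder identity rigorous (rather than a formal collection of "terms of the same order"), i.e. verifying that $\barG_N$ differs from $\Gamma$ by precisely the Duhamel integral of the Taylor remainder of the coefficients, with no leftover terms of order $\le N$; this requires carefully iterating the Duhamel representation and matching it against the nested Cauchy problems \eqref{eq:v.0.pide}–\eqref{eq:v.n.pide}. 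The scaling estimates themselves, though numerous, are routine once the homogeneity of $\Gamma^{\Lambda}$ under $\mathcal{G}_B$ is invoked.
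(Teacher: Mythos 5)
Your overall strategy (Duhamel/parametrix representation of the error, intrinsic Taylor remainder of the coefficients via Theorem \ref{th:main}, Gaussian scaling plus Chapman--Kolmogorov to absorb the time convolutions) matches the paper's in spirit, but you reverse the logical order: the paper proves \eqref{eq:error_estimate_th} first, starting from the exact identity
\begin{equation}
 u(t,x)-U_{N}(t,x)=\sum_{n=0}^{N}\int_{t}^{T}\!\!\int_{\R^{d}}\Gamma(t,x;s,\xi)\,\big(\Kc-\Kc^{(\zb)}_{n}\big)u^{(\zb)}_{N-n}(s,\xi)\,d\xi\,ds\,\Big|_{\zb=(t,x)},
\end{equation}
with the \emph{full} fundamental solution $\Gamma$ as outer kernel and the datum's intrinsic regularity entering through Proposition \ref{prop:der_un}; the density bound \eqref{th:error_estim_fund_solution} is then obtained as a modification. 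Your converse deduction of \eqref{eq:error_estimate_th} from \eqref{th:error_estim_fund_solution} is viable (after checking $\int(\Gamma-\barG_{N})(t,x;T,y)\,dy=0$, so that subtracting $\phi(e^{(T-t)B}x)$ is legitimate), so the reversal per se is not the problem.

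The genuine gap is in the induction for $|u_{n}(\cdot\,;T,y)|\le C(T-t)^{n/2}\Gamma^{cM}$. First, the hypothesis must carry bounds on the \emph{derivatives} $\p_{\xi_{i}\xi_{j}}u_{n-h}$, together with the off-diagonal polynomial factors $[\xi-e^{(s-t)B}x]_{B}^{\,n-h}$ that arise because the base point is frozen at $\zb=(t,x)$ while the integration variable is $(s,\xi)$; a bound on $u_{n-h}$ alone does not suffice (this is exactly what Proposition \ref{prop:der_un} and Lemma \ref{lem:Ln} supply). Second, and more seriously, your exponent bookkeeping is reversed: the monomial of $B$-degree $h$ measures the increment from $(t,x)$ to $(s,\xi)$, so it gains at best $(s-t)^{h/2}$ (or $(T-t)^{h/2}$), not $(T-s)^{h/2}$, whereas the two derivatives falling on $u_{n-h}$ cost $(T-s)^{-1}$. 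The correct net time-integrand is $(s-t)^{h/2}(T-s)^{(n-h)/2-1}$, which for the top term $h=n$ — the source $(\Kc_{n}-\Kc_{n-1})u_{0}$ with $\p^{2}u_{0}\sim(T-s)^{-1}\Gamma^{cM}$ — is \emph{not} integrable at $s=T$; your condition ``$h\ge1$'' only appears to save the day because of the swap. The missing device is an integration by parts in $\xi$, which moves the derivatives onto the kernel and the polynomial coefficient so that the singularity migrates to $s=t$, where the monomial's gain $(s-t)^{h/2}$ tames it; this is precisely what the paper invokes for the $k=0$ case, which is the one relevant to the density estimate. In the paper's route for \eqref{eq:error_estimate_th} with $k>0$ the issue does not arise, because the datum's regularity contributes the factor $(T-s)^{(k-2)/2}$ with $k>0$, keeping the time integral convergent.
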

\subsection{Proof of Theorem \ref{th:error_estimates_taylor_global}}\label{smalla}
The proof of Theorem \ref{th:error_estimates_taylor_global} is based on the following two
propositions. The first one provides some Gaussian estimates for the fundamental solution
$\Gamma=\Gamma(t,x;T,y)$ of the operator $\Kc$ in \eqref{Kc}-\eqref{Ac}: for the proof see
\cite{Polidoro1994} and \cite{amrx}. Throughout this section we suppose the assumptions of Theorem
\ref{th:error_estimates_taylor_global} to be in force.
\begin{proposition}\label{lem:gaussian_estimates}
For any $k\in \R_{\ge 0}$, $c>1$ and $\ggg\in\mathbb{N}_{0}^{d}$, with $|\ggg|_B\le N+ 2$, we have
\begin{equation}\label{Gaua}
 \big[y-e^{(T-t)B}x\big]^{k}_B \big| D_x^{\ggg}\Gamma(t,x;T,y)\big|\le C (T-t)^{\frac{k-|\ggg|_B}{2}}\Gamma^{cM}(t,x;T,y), \qquad 0\leq t<T,\ x,y\in\R^{d},
\end{equation}
where $\Gamma^{cM}$ is the fundamental solution of the operator in \eqref{eq:heatoperator} and $C$
is a positive constant, only dependent on $M,\m,B,T,N,k$ and $c$.
\end{proposition}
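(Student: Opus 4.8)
\textbf{Proof plan for Proposition \ref{lem:gaussian_estimates}.}

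The plan is to reduce the estimate to the constant-coefficient model via a scaling and freezing argument, and then to obtain the derivative bounds for $\Gamma$ from known Gaussian upper bounds on $\Gamma$ itself by means of interior Schauder-type estimates. First I would recall from \cite{Polidoro1994} and \cite{amrx} the basic two-sided Gaussian bound: under Assumptions \ref{assA}, \ref{assB} and \ref{assC}, there exist $c_0>1$ and $C_0>0$, depending only on $M,\m,B,T$, such that
\begin{equation}
 \Gamma(t,x;T,y)\le C_0\,\Gamma^{c_0 M}(t,x;T,y),\qquad 0\le t<T,\ x,y\in\R^d,
\end{equation}
together with the corresponding lower bound (which will be needed to absorb the prefactor $[y-e^{(T-t)B}x]_B^k$ into a slightly larger Gaussian). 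The key structural fact is that $\Gamma^{\Lambda}$ is $D(\lambda)$-homogeneous of degree $-Q$ (where $Q=\sum_j\s_j$ is the homogeneous dimension), and that the increment $y-e^{(T-t)B}x$ is exactly the quantity that transforms well under the dilations $D(\lambda)$ and the group law $\circ$ of $\mathcal{G}_B$; this is why the homogeneous seminorm $[\cdot]_B$ and not the Euclidean norm appears in \eqref{Gaua}.

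The main step is the derivative estimate. I would fix $z=(t,x)$ and $w=(T,y)$ with $T-t=:\tau$, and rescale using the parabolic dilation adapted to $\Kc$: set $\lambda=\sqrt{\tau}$ and consider $\Gamma$ as a function of the rescaled variables $D(1/\lambda)\big((t,x)^{-1}\circ(\cdot)\big)$. Under this change of variables the operator $\Kc$ with coefficients $a_{ij},a_i$ is transformed into an operator $\Kc_\lambda$ of the same form, with coefficients that are uniformly bounded in $C^{0,\alpha}_B$ (indeed in $C^{N,1}_B$) by the $C^{N,1}_B$-bounds in Assumption \ref{assC}, and whose second-order part still satisfies the uniform ellipticity \eqref{cond:parabolicity} with the same $M,\m$. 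On the rescaled unit-scale configuration, interior Schauder estimates for Kolmogorov operators in the intrinsic H\"older spaces (as in \cite{Francesco}, see also \cite{Manfredini,Lunardi1997,DiFrancescoPascucci}) give, for $|\ggg|_B\le N+2$,
\begin{equation}
 \big|D^{\ggg}_x\big(\text{rescaled }\Gamma\big)\big|\le C\,\big\|\text{rescaled }\Gamma\big\|_{L^\infty(Q_1)},
\end{equation}
where $Q_1$ is a unit intrinsic cylinder; scaling back multiplies the left-hand side by $\tau^{-|\ggg|_B/2}$ (because $D^\ggg_x$ carries homogeneous weight $|\ggg|_B$ under $D(\lambda)$), and the $L^\infty$-norm on $Q_1$ of the rescaled $\Gamma$ is controlled, via the Gaussian upper bound above and the semigroup/reproduction property of $\Gamma$, by $C\,\Gamma^{c_1 M}(t,x;T,y)$ for a slightly enlarged $c_1$. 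This yields
\begin{equation}
 \big|D^{\ggg}_x\Gamma(t,x;T,y)\big|\le C\,\tau^{-|\ggg|_B/2}\,\Gamma^{c_1 M}(t,x;T,y).
\end{equation}
Finally, multiplying by $[y-e^{(T-t)B}x]_B^k$ and using that for any $c_1<c$ one has $[y-e^{(T-t)B}x]_B^k\,\Gamma^{c_1M}(t,x;T,y)\le C\,\tau^{k/2}\,\Gamma^{cM}(t,x;T,y)$ — a pointwise Gaussian inequality obtained by writing the Gaussian kernel in the $D(1/\sqrt\tau)$-rescaled coordinates, where $[\cdot]_B^k$ becomes a polynomial factor absorbed by the exponential at the cost of enlarging the constant in front of the quadratic form — gives exactly \eqref{Gaua}, with $C$ depending only on $M,\m,B,T,N,k$ and $c$.

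The main obstacle I anticipate is keeping the constants uniform in the rescaling: one must check that the rescaled operators $\Kc_\lambda$ for $0<\lambda\le\sqrt{T}$ form a family whose coefficients are \emph{equibounded} in the relevant intrinsic norm and \emph{equi-elliptic}, so that the Schauder constant $C$ is the same for all of them; this is where Assumption \ref{assC} (boundedness in $C^{N,1}_B$, not merely smoothness) and the special block structure of $B$ in Assumption \ref{assB} (which makes the dilations $D(\lambda)$ genuine automorphisms compatible with $\Kc$) are essential. A secondary technical point is that the Schauder estimates in the literature are stated for solutions of $\Kc u=f$ on cylinders, so one applies them to $u=\Gamma(\cdot;T,y)$ on cylinders not touching the pole $w=(T,y)$, i.e. on the region $T-t\ge c\,\tau$; the self-similar covering of $[0,T[\times\R^d$ away from the diagonal by such cylinders, combined with the $D(\lambda)$-homogeneity, is exactly the mechanism that produces the sharp power $\tau^{(k-|\ggg|_B)/2}$. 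Everything else — the pointwise Gaussian manipulations and the bookkeeping of the constant $c$ — is routine.
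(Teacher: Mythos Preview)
The paper does not give its own proof of this proposition: it simply states the estimate and refers the reader to \cite{Polidoro1994} and \cite{amrx}. In those references the bound \eqref{Gaua} is obtained as a byproduct of the Levi parametrix construction of $\Gamma$: one writes $\Gamma=\Gamma_0+\Gamma_0\ast\Phi$ with $\Gamma_0$ the frozen-coefficient kernel and $\Phi$ the parametrix series, and the pointwise Gaussian bounds on $D_x^{\ggg}\Gamma$ follow directly from explicit estimates on $D_x^{\ggg}\Gamma_0$ and on the convolution iterates, with the regularity $a_{ij},a_i\in C^{N,1}_B$ allowing differentiation up to $|\ggg|_B\le N+2$.

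Your route is different but equally legitimate: you take the Gaussian upper bound on $\Gamma$ as input and upgrade it to a derivative bound by a scale-and-Schauder argument, exploiting the $D(\lambda)$-homogeneity of the group $\mathcal{G}_B$. This is the standard alternative (used e.g.\ in \cite{Francesco} for a-priori estimates) and has the advantage of separating cleanly the analytic input (existence of $\Gamma$ with a Gaussian bound) from the regularity gain; the parametrix approach, by contrast, delivers existence and all the pointwise bounds in one stroke. Your identification of the key uniformity issue --- that the rescaled operators $\Kc_\lambda$ must have coefficients equibounded in $C^{N,1}_B$ and be equi-elliptic, which is exactly what Assumptions \ref{assA}--\ref{assC} guarantee since the intrinsic H\"older seminorms scale as $\lambda^{\alpha}$ under $D(\lambda)$ and $\lambda\le\sqrt{T}$ is bounded --- is correct, and the final absorption of $[y-e^{(T-t)B}x]_B^k$ into a larger Gaussian $\Gamma^{cM}$ is indeed routine once the derivative bound is in hand.
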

}
The following result is proved in Appendix \ref{prel}.
\begin{proposition}\label{prop:der_un}
Let $\phi\in C^{k}_{B}(\Rd)$ with $k\in [0,2r+1]$ and $n\in \N$ with $n\le N$. Then we have
\begin{equation}
 \big|D_x^{\b}u^{(\zb)}_n(t,x)\big|\leq C\:(T-t)^{\frac{k-|\b|_B}{2}}\left((T-t)^{\frac{n}{2}}+\big[x-e^{(t-\bar{t})B}\xb\big]^{n}_B\right),\qquad 0\le t<T,\quad x\in\Rd,
\end{equation}
where $C$ is a constant that depends only on $M,\m,B,T,N,|\b|_B$ and $\|\phi\|_{C^{k}_B(\Rd)}$.
\end{proposition}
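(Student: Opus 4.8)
The plan is to prove the estimate by induction on $n$, using the integral representation of $u^{(\zb)}_n$ coming from the Cauchy problems \eqref{eq:v.0.pide}--\eqref{eq:v.n.pide}. For $n=0$ one has $u^{(\zb)}_0(t,x)=\int_{\Rd}\Gamma^{(\zb)}_0(t,x;T,y)\phi(y)\,dy$, and differentiating under the integral sign reduces the claim to bounding $\big[x-e^{(t-\bar t)B}\xb\big]^k_B\,|D^\b_x\Gamma^{(\zb)}_0(t,x;T,y)|$ against a suitable Gaussian-type kernel. Here I would use the explicit Gaussian form \eqref{Gamma0} of $\Gamma^{(\zb)}_0$ together with the scaling properties of the covariance $\Cv_{\zb}(T-t)$ under the dilations $D(\lambda)$: each $x_i$-derivative of $\Gamma^{(\zb)}_0$ produces, after the natural change of variables, a factor of order $(T-t)^{-\sigma_i/2}$, which accounts for the $(T-t)^{-|\b|_B/2}$ term, while the factor $\big[x-e^{(t-\bar t)B}\xb\big]^k_B$ can be split, via the triangle-type inequality for $[\cdot]_B$, into the $B$-norm of the Gaussian argument (absorbed into the kernel up to a change of constant $M\mapsto cM$) and a remainder of size $(T-t)^{k/2}$. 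Pairing with $\phi\in C^k_B$ — so that $\phi(y)=\phi\big(e^{(T-t)B}x\big)+O\big([y-e^{(T-t)B}x]^k_B\big)$ — and integrating the Gaussian tails gives precisely the $n=0$ bound with the bracket term on the right entering through the distance between the base point $\zb$ and $(t,x)$.

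For the inductive step, I would write $u^{(\zb)}_n$ via Duhamel's formula applied to \eqref{eq:v.n.pide}:
\begin{equation}
 u^{(\zb)}_n(t,x)=-\int_t^T\!\!\int_{\Rd}\Gamma^{(\zb)}_0(t,x;s,\eta)\sum_{h=1}^n\big(\Kc^{(\zb)}_h-\Kc^{(\zb)}_{h-1}\big)u^{(\zb)}_{n-h}(s,\eta)\,d\eta\,ds.
\end{equation}
The operator $\Kc^{(\zb)}_h-\Kc^{(\zb)}_{h-1}$ is a second-order operator whose coefficients are the degree-exactly-$h$ homogeneous parts of the intrinsic Taylor polynomials $\TT_h(\at_{ij},\zb)-\TT_{h-1}(\at_{ij},\zb)$ (and similarly for $\at_i$), hence are polynomials in the increments $(\eta-e^{(s-\bar t)B}\xb)$ that are $D(\lambda)$-homogeneous of degree $h$; by Assumption \ref{assC} their coefficients are bounded by $M$ times such a homogeneous polynomial. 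Applying $D^\b_x$ to the formula and moving the derivatives onto $\Gamma^{(\zb)}_0$, I then estimate term by term: each application of the inductive hypothesis to $u^{(\zb)}_{n-h}$, combined with the Gaussian bound for $D^\b_x\Gamma^{(\zb)}_0$ as in the base case and the homogeneity-$h$ weight coming from the coefficients of $\Kc^{(\zb)}_h-\Kc^{(\zb)}_{h-1}$, produces an integrand of the right homogeneous degree. Carrying out the $\eta$-integration (a Gaussian moment computation) and then the $s$-integration over $[t,T]$ — which contributes a further factor $(T-t)$ and shifts the exponent by $1$, consistently with $2$ orders of extra homogeneity per unit of $n$ — yields the claimed bound at level $n$, with the bracket $[x-e^{(t-\bar t)B}\xb]^n_B$ appearing because the increment in the coefficients is measured from $\zb$ rather than from the running point.

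The main obstacle is the careful bookkeeping of homogeneous degrees through the recursion: one must verify that when the homogeneous-degree-$h$ coefficient polynomial of $\Kc^{(\zb)}_h-\Kc^{(\zb)}_{h-1}$ is evaluated at the integration variable $\eta$ and then bounded using $[\eta-e^{(s-\bar t)B}\xb]_B\le c\big([\eta-e^{(s-t)B}x]_B+[e^{(s-t)B}x-e^{(s-\bar t)B}\xb]_B\big)$, the first piece is absorbed by the Gaussian kernel $\Gamma^{(\zb)}_0(t,x;s,\eta)$ (raising its effective constant but preserving integrability of all needed moments), while the second piece contributes exactly the powers of $[x-e^{(t-\bar t)B}\xb]_B$ and of $(s-t)^{1/2}$ that combine with the inductive terms to close the estimate. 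A secondary technical point is justifying differentiation under the integral sign and the finiteness of all Gaussian moments uniformly for $t$ bounded away from $T$; this is routine given Proposition \ref{lem:gaussian_estimates}-type bounds for the constant-coefficient kernel and the sub-exponential growth of $\phi$ from Assumption \ref{assD}.
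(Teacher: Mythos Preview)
Your inductive/Duhamel approach has a genuine gap at the $s$-integration step. After placing $D_x^\b$ on $\Gamma^{(\zb)}_0(t,x;s,\eta)$ as you propose, the Gaussian bound yields a factor $(s-t)^{-|\b|_B/2}$, while the inductive hypothesis applied to $\p_{\eta_i\eta_j}u^{(\zb)}_{n-h}$ contributes $(T-s)^{(k-2)/2}$. The resulting integral $\int_t^T(s-t)^{-|\b|_B/2}(T-s)^{(k-2)/2}(\cdots)\,ds$ diverges at $s=t$ whenever $|\b|_B\ge2$, and at $s=T$ when $k=0$; your assertion that the $s$-integration simply ``contributes a further factor $(T-t)$ and shifts the exponent by~$1$'' is therefore unjustified. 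Transferring the derivatives to $u^{(\zb)}_{n-h}$ via \eqref{prop:deriv_bis} and integration by parts only relocates the singularity to $s=T$ (one picks up $(T-s)^{(k-|\b|_B-2)/2}$). The inductive route can probably be rescued by splitting $[t,T]$ at its midpoint and distributing derivatives differently on each half, but this is a nontrivial additional argument that your proposal does not supply. (Your description of the $n=0$ case is also muddled: no factor $[x-e^{(t-\bar t)B}\xb]^k_B$ arises there; the needed base estimate is simply $|D_x^\b u^{(\zb)}_0|\le C(T-t)^{(k-|\b|_B)/2}$, which is Proposition~\ref{l1}.)

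The paper sidesteps this difficulty entirely. It first establishes the closed-form identity $u^{(\zb)}_n=\Lc^{(\zb)}_n u^{(\zb)}_0$ (Theorem~\ref{th:un_general_repres}) and then shows, via the symmetry relations \eqref{prop:deriv_bis}--\eqref{prop:polyn1_bis}, that $\Lc^{(\zb)}_n(t,T,x)$ is a finite sum of operators $(T-t)^{\frac{|\a|_B-|\d|_B+n}{2}}(x-e^{(t-\bar t)B}\xb)^\d D_x^\a$, where each intermediate $s_j$-integrand carries a \emph{nonnegative} power of $(s_j-t)$ (Lemma~\ref{lem:Ln} and the constraint in $I_n$). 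All $s$-integrals are thus computed exactly as polynomial integrals \emph{before} any derivative estimate is invoked; one then applies $D_x^\b$ via Leibniz, uses Proposition~\ref{l1} for each $D_x^{\a+\b-\nu}u^{(\zb)}_0$, and concludes with the elementary inequality $a^m b^{n-m}\le a^n+b^n$.
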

\begin{proof}[Proof of Theorem \ref{th:error_estimates_taylor_global}]
To keep formulas at a reasonable size we suppose that the functions $a_i$, $i=1,\dots,p_0,$ in
\eqref{Ac} are identically zero. We first remark that a straightforward computation (see Lemma 6.3
in \cite{LPP4}) shows that
\begin{align}\label{eq:m13_1}
 u(t,x)-U_{N}(t,x) =\sum_{n=0}^N E_n^{(\zb)}(t,x)\Big|_{\bar{z}=(t,x)}.
\end{align}
where
\begin{equation}\label{eq:m13_2}
\begin{split}
 E_n^{(\zb)}(t,x):=&\ \int_t^T \int_{\mathbb{R}^d} \Gamma(t,x;s,\xi) \left(\Kc - {\Kc}^{(\zb)}_n\right) u^{(\zb)}_{N-n}(s,\xi)d \xi d
 s\\
 =&\ \frac{1}{2}\sum_{i,j=1}^{p_0}\int_t^T \int_{\mathbb{R}^d}\Gamma(t,x;s,\xi)
  \Big( a_{ij}(s,\xi) -  \TT_{n}\left(a_{ij},(\zb)\right)(s,\xi))
  \Big) \p_{\xi_i\xi_j}u^{(\zb)}_{N-n}(s,\xi)\, d \xi d s.
\end{split}
\end{equation}
Now, if $k>0$, by Theorem \ref{th:main} and Proposition \ref{prop:der_un} we have
\begin{align}
\!\! \big|E^{(t,x)}_{n}(t,x)\big|&\leq C\int_t^T \!\! \int\limits_{\mathbb{R}^d} \!\Gamma(t,x;s,\xi)\left\|(t,x)^{-1}\circ(s,\x)\right\|^{n+1}_{B}
 (T-s)^{\frac{k-2}{2}}\! \left((T-s)^{\frac{N-n}{2}}+\big[\x-e^{(s-t)B}x\big]^{N-n}_B\right)\! d \xi d s
\intertext{(by Proposition \ref{lem:gaussian_estimates})}
 &\le C  \int_t^T
 (s-t)^{\frac{n+1}{2}}(T-s)^{\frac{k-2}{2}}\left((T-s)^{\frac{N-n}{2}}+(s-t)^{\frac{N-n}{2}}\right) d s \\
 &\le C \: (T-t)^{\frac{N+k+1}{2}}
\end{align}
where we have used the identity
  $$\int_{t}^{T}(T-s)^{n}(s-t)^{k}\, d s=\frac{\Gamma_E (k+1) \Gamma_E (n+1)}{\Gamma_E (k+n+2)}(T-t)^{k+n+1},\qquad n,k>-1,$$
with $\Gamma_E$ denoting the Euler Gamma function.
The case $k=0$ can be handled similarly  performing first an integration by parts in \eqref{eq:m13_2}.

Finally, estimate \eqref{th:error_estim_fund_solution} can be proved by a straightforward
modification of the proof of \eqref{eq:error_estimate_th}, using also the Chapman-Kolmogorov
equation. We omit the details for brevity.
\end{proof}
\begin{remark}\label{rem:greeks}
Under the assumptions of Theorem \ref{th:error_estimates_taylor_global}, we have also error bounds
for the approximation of the derivatives of $u$; precisely, we have
\begin{equation}\label{eq:greeks}
\big|D_x^{\a}u(t,x)-D_x^{\a}U^{(\bar{z})}_{N}(t,x)|_{\bar{z}=(t,x)}\big|\leq
C(T-t)^{\frac{N+k+1-|\a|_B}{2}}, \qquad |\a|_B\leq N.
\end{equation}
The proof of this formula is analogous to the proof of Theorem
\ref{th:error_estimates_taylor_global}, once $D_x^{\a}$ is applied to the representation formulas
\eqref{eq:m13_1} and \eqref{eq:m13_2}. 
When $u(t,x)$ represents the price of an arithmetic Asian
option, 
formula \eqref{eq:greeks} provides error bounds on the approximate sensitivities 
or, as they are usually called in finance, the Greeks. 
For instance, in the case of a fixed-strike Asian option (see Example \ref{exreg}), we have $k=3$
and thus
\begin{equation}
 \big|\text{Delta} - \p_{x_1}U_N^{(\zb)}|_{\bar{z}=(t,x_{1},x_{2})}\big|\leq C(T-t)^{\frac{N+3}{2}},
 \qquad \big|\text{Gamma} - \p_{x_1,x_1}U_N^{(\zb)}|_{\bar{z}=(t,x_{1},x_{2})}\big|\leq C(T-t)^{\frac{N+2}{2}},
\end{equation}
where $\text{Delta}:=\p_{x_1}u$ and $\text{Gamma}:=\p_{x_1,x_1}u$.
\end{remark}

\subsection{Proof of Theorem \ref{th:error_estimates_taylor}}\label{sec:local_estimate} 
Throughout this section we suppose the assumptions of Theorem \ref{th:error_estimates_taylor} to
be in force. The proof of Theorem \ref{th:error_estimates_taylor} is based on
some estimates on short cylinders initially introduced in \cite{Safonov1998} for uniformly parabolic
operators and later generalized to Kolmogorov operators  in \cite{Cinti2009135}.

First, we introduce the ``cylinder'' of radius $R$ and height $h$ centered in $(s,\x)\in \Rdd$ and
its lateral and parabolic boundaries, respectively:
\begin{align}
 H_{h,R}(s,\x)&:= \{(t,x)\in \Rdd \:|\:s-h<t<s,\:[x-e^{(t-s)B}\x]_B<R\},\\
 \Sigma_{h,R} (s,\x)&:= \{(t,x)\in \Rdd \:| \: s-h<t<s,\: [x-e^{(t-s)B}\x]_B=R\},\\
 \p_P H_{h,R} (s,\x)&:= \Sigma_{h,R} (s,\x) \cup \{(s,x)\in \Rdd \:|\: [x-\x]_B<R\}.
\end{align}
We explicitly observe that these cylinders are invariant with respect to the left translations in
$\mathcal{G}_{B}$, {meaning that $z\circ H_{h,R}(\z)=H_{h,R}(z\circ \z)$ for any $z,\z \in \Rdd$}.
We also recall the following inequality (see Proposition 2.1 in \cite{Manfredini}):
\begin{equation}\label{eq:estimate_norm_2}
 \|z\circ \z\|_{B}\le c_{B}\left(\|z\|_{B}+\|\z\|_{B}\right),\qquad z,\z\in\Rdd,
\end{equation}
where $c_{B}\ge 1$ is a constant that depends only on the matrix $B$. In particular, taking
$z=(0,x)$ and $\z=(t,0)$, \eqref{eq:estimate_norm_2} implies that
\begin{equation}\label{eq:estimate_norm_2bis}
 [e^{tB}x]_B\le \|(t,e^{tB}x)\|_{B}=\norm{z\circ\z}_B\le c_B\big(|t|^{\frac{1}{2}}+[x]_B\big),\qquad t\in\R,\ x\in\Rd.
\end{equation}
\begin{lemma}\label{lem:estimate_short_cylinder} There exist $C>0$, $\eps\in\, ]0,1[$, only dependent on
$M,\mu,B$, and a nonnegative function $v\in C([0,T]\times\Rd)\cap C^{2,1}_{B,\text{\rm loc}}$ such
that, for every $R>0$ we have
\begin{align}\label{eq:lem_cylinder1}
 &{\Kc}v(t,x)= 0 & (t,x)\in H_{\varepsilon R^2,R}(T,0),\\  \label{eq:lem_cylinder2}
 &v(t,x)\geq 1, & (t,x)\in \Sigma_{\varepsilon R^2,R}(T,0),\\ \label{eq:lem_cylinder3}
 & v(t,x)\leq  C\exp\left(-\frac{ R^2}{{C(T-t)}}\right) & (t,x)\in H_{\varepsilon R^2,\frac{R}{8 c_B^2}}(T,0),
\end{align}
where $c_{B}$ is the constant in \eqref{eq:estimate_norm_2}.
\end{lemma}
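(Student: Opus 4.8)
The plan is to realize $v$ as the solution of the backward Cauchy problem for $\Kc$ on $[0,T[\,\times\R^{d}$ (no lateral conditions) whose final datum is a bounded function supported away from the axis $\{x=0\}$, and then to read \eqref{eq:lem_cylinder1}--\eqref{eq:lem_cylinder3} off the Gaussian bound of Proposition \ref{lem:gaussian_estimates} for the fundamental solution $\Gamma$ of $\Kc$. Concretely, fix $R>0$, let $\kappa$ and $\eps\in(0,1)$ be constants (depending only on $M,\mu,B$) to be chosen below, and take $\psi_R\in C(\R^{d})$ with $0\le\psi_R\le\kappa$, $\psi_R\equiv\kappa$ on $\{[y]_B\ge 3R/4\}$ and $\psi_R\equiv 0$ on $\{[y]_B\le R/2\}$; set
\begin{equation*}
  v(t,x):=\int_{\R^{d}}\Gamma(t,x;T,y)\,\psi_R(y)\,dy,\qquad (t,x)\in[0,T]\times\R^{d}.
\end{equation*}
Then $v\ge0$; since $\psi_R$ is bounded continuous, $v\in C([0,T]\times\R^{d})$ with $v(T,\cdot)=\psi_R$; and since $(t,x)\mapsto\Gamma(t,x;T,y)$ solves $\Kc u=0$ on $\{t<T\}$, we get $\Kc v=0$ on $[0,T[\,\times\R^{d}$, which contains $H_{\eps R^2,R}(T,0)$ for every $R$; this gives \eqref{eq:lem_cylinder1}. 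The interior regularity $v\in C^{2,1}_{B,\text{\rm loc}}$ comes from the Schauder theory for $\Kc$ already used for Proposition \ref{prop:solution_cauchy_pb} (see \cite{Francesco}). Note $v=v_R$ depends on $R$, whereas $C$ and $\eps$ will not.

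The geometric point is that, for $0<T-t\le\eps R^2$ and $[x]_B\le R$, the centre $e^{(T-t)B}x$ of the Gaussian kernel $\Gamma(t,x;T,\cdot)$ stays $[\cdot]_B$-close to $x$. Exploiting the block structure \eqref{eq:B_blocks} of $B$ coordinate by coordinate, together with the elementary inequalities $(u+w)^{1/\sigma}\le u^{1/\sigma}+w^{1/\sigma}$ and $(u-w)^{1/\sigma}\ge u^{1/\sigma}-w^{1/\sigma}$ for $u\ge w\ge0$, $\sigma\ge1$ (which yield $\big|[a+b]_B-[a]_B\big|\le[b]_B$ and so avoid the lossy constant $c_B$ of \eqref{eq:estimate_norm_2}), one checks $[e^{(T-t)B}x-x]_B\le\omega_B(\eps)\,R$ with $\omega_B(\eps)\to0$ as $\eps\to0^{+}$. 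Consequently, for $\eps$ small enough depending only on $B$: if $[x]_B=R$ then $[e^{(T-t)B}x]_B\ge 7R/8$, whence the $[\cdot]_B$-ball of centre $e^{(T-t)B}x$ and radius $R/8$ lies in $\{\psi_R=\kappa\}$; if instead $[x]_B<R/(8c_B^{2})$ then $[e^{(T-t)B}x]_B\le R/4$, whence every $y\in\operatorname{supp}\,\psi_R\subseteq\{[y]_B\ge R/2\}$ satisfies $[y-e^{(T-t)B}x]_B\ge R/4$.

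Both \eqref{eq:lem_cylinder2} and \eqref{eq:lem_cylinder3} then follow by combining these inclusions with Proposition \ref{lem:gaussian_estimates} (taken with no derivatives and $k=0$) and the anisotropic rescaling $y=e^{(T-t)B}x+D_{0}(\sqrt{T-t}\,)\tilde w$, under which $\Gamma^{cM}(t,x;T,y)\,dy$ becomes a fixed, $R$- and $t$-independent, sub-Gaussian probability measure in $\tilde w$ while a $[\cdot]_B$-ball of radius $\rho$ about $e^{(T-t)B}x$ turns into one of radius $\rho/\sqrt{T-t}$. For \eqref{eq:lem_cylinder2}, on $\Sigma_{\eps R^2,R}(T,0)$, using $\psi_R=\kappa$ on the ball just described, $\int_{\R^{d}}\Gamma(t,x;T,y)\,dy=1$ (conservativeness of $\Gamma$) and $T-t\le\eps R^2$,
\begin{equation*}
  v(t,x)\ \ge\ \kappa\!\!\int_{[y-e^{(T-t)B}x]_B\le R/8}\!\!\!\!\Gamma(t,x;T,y)\,dy\ \ge\ \kappa\Big(1-C\exp\!\big(-\tfrac{R^{2}}{C(T-t)}\big)\Big)\ \ge\ \tfrac{\kappa}{2},
\end{equation*}
so $\kappa:=2$ gives \eqref{eq:lem_cylinder2}; for \eqref{eq:lem_cylinder3}, on $H_{\eps R^2,R/(8c_B^{2})}(T,0)$, using that $\psi_R\le\kappa$ vanishes off $\{[y-e^{(T-t)B}x]_B\ge R/4\}$ and $\|(t,x)^{-1}\circ(T,y)\|_B\ge[y-e^{(T-t)B}x]_B$,
\begin{equation*}
  v(t,x)\ \le\ \kappa\!\!\int_{[y-e^{(T-t)B}x]_B\ge R/4}\!\!\!\!\Gamma(t,x;T,y)\,dy\ \le\ C'\exp\!\Big(-\frac{R^{2}}{C'(T-t)}\Big),
\end{equation*}
the last step being the sub-Gaussian tail bound, valid since $R^{2}/(T-t)\ge1/\eps\ge1$. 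Taking $\eps$ to be the minimum of the finitely many thresholds used, and $C$ the largest constant involved, concludes the proof.

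The step I expect to be the main obstacle is making the geometric estimate of the second paragraph, and the attendant Gaussian tail bounds, \emph{uniform in $R$}: this forces one to track everything through the anisotropic dilations $D(\lambda)$ and the block form \eqref{eq:B_blocks} of $B$, and it is exactly the content of the short-cylinder constructions of \cite{Safonov1998}, \cite{Cinti2009135} transported to the operator $\Kc$. In contrast, the definition of $v$ and \eqref{eq:lem_cylinder1} are soft; and the degenerate directions cause no trouble, since over a time span $\le\eps R^{2}$ their contribution to $[\cdot]_B$ is even smaller than that of the non-degenerate ones.
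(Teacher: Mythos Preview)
Your proof is correct and follows essentially the same route as the paper: define $v$ by integrating the fundamental solution $\Gamma$ of $\Kc$ against a cutoff supported on $\{[y]_B\gtrsim R/2\}$, get \eqref{eq:lem_cylinder1} for free, and read \eqref{eq:lem_cylinder2}--\eqref{eq:lem_cylinder3} off the Gaussian upper bound for $\Gamma$ via the anisotropic change of variables $\eta=D_0\big((T-t)^{-1/2}\big)(y-e^{(T-t)B}x)$.

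Two small differences are worth recording. For \eqref{eq:lem_cylinder2} the paper argues by uniform continuity, using $v(t,x)\to 2\chi_R(x)=2$ as $t\to T^{-}$ uniformly on $\{[x]_B=R\}$, and then picks $\eps$ small; you instead reuse the same Gaussian tail bound that yields \eqref{eq:lem_cylinder3}, which is slightly more economical. For the geometric step the paper derives the reverse triangle inequality $[y-e^{tB}x]_B\ge c_B^{-1}[y]_B-c_B(|t|^{1/2}+[x]_B)$ from the quasi-norm inequality \eqref{eq:estimate_norm_2} on $\R^{1+d}$, which is why the constant $c_B$ enters. Your observation that the spatial seminorm $[\cdot]_B$ itself is genuinely subadditive on $\R^d$ (because $t\mapsto t^{1/\sigma_j}$ is subadditive for $\sigma_j\ge1$) lets you avoid $c_B$ in the inner estimates and gives cleaner constants, though of course the statement fixes the inner radius at $R/(8c_B^2)$ anyway.
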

\begin{proof}
{Let $\G$ denote the fundamental solution of ${\Kc}$ in \eqref{Kc}: $\G$ can be thought as the
transition density of a dummy process $\tilde{X}$ whose infinitesimal generator is ${\Ac}$ and can
be used to approximate the original process $X$ locally on $D$. The proof of the lemma is based on
a Gaussian upper bound for $\G$. More precisely, since ${\Kc}$ is a
global Kolmogorov operator, by Proposition \ref{lem:gaussian_estimates} we have:
} there exists a positive constant $c^{+}$, only {depending
on $M,\mu$ and $B$,} such that
\begin{equation}\label{eq:estimate_gammatilde}
 {\G}(t,x;s,\x) \leq c^{+}\G^{\Lambda}(t,x;s,\x), \qquad 0\leq t<s\leq T,\ x,\x\in\Rd,
\end{equation}
where $\G^{\Lambda}$ is the fundamental solution of the constant coefficients Kolmogorov operator
in \eqref{eq:heatoperator} and $\Lambda$ is strictly greater {than} $M$, say $\Lambda=2M$.

Next, we set
\begin{equation}
 v(t,x)=2\int\limits_{\Rd}{\G}(t,x;T,y)\chi_{R}(y) dy,\qquad t<T,\ x\in\R^{d},
\end{equation}
where $\chi_{R
}\in C^{\infty}(\Rd,[0,1])$ is a cut-off function such that $\chi_{R
}(y)=0$ if $[y]_B< \frac{R}{2} 
$
and $\chi_{R
}(y)=1$ if $[y]_B> \frac{3}{4}R$. By definition, it is clear that $v$ satisfies
\eqref{eq:lem_cylinder1}. Moreover, we have
\begin{align}\label{limi}
 \lim_{t\to T^{-}}v(t,x)=2\chi_{R}(x)=2,
\end{align}
{\it uniformly w.r.t. $x\in\R^{d}$ such that $[x]_{B}=R$:} this follows by noting that
\begin{align}
 \left|v(t,x)-2\chi_{R}(x)\right|&\le 2\int\limits_{\Rd}{\G}(t,x;T,y)\left|\chi_{R}(y)-\chi_{R}(x)\right| dy && \\
 &\le 2c^{+}\int\limits_{\Rd}{\G}^{\Lambda}(t,x;T,y)\left|\chi_{R}(y)-\chi_{R}(x)\right| dy. && \text{(by \eqref{eq:estimate_gammatilde})}
\end{align}
Now, by \eqref{limi} there exists $\varepsilon>0$, which we
can safely assume to be less than { $\frac{1}{16 c_B^4}$ and $\frac{1}{64 c_B^2}$}, 
such that \eqref{eq:lem_cylinder2} holds.

The proof of \eqref{eq:lem_cylinder3} depends on the reverse triangle inequality for the norm
$[\cdot]_B$:
\begin{equation}
\label{eq:estimate_norm_increment}
 [y-e^{tB}x]_B \geq \frac{1}{c_B}[y]_B - c_B\big(|t|^{\frac{1}{2}}+[x]_B\big),\qquad t\in\R,\ x,y\in\Rd,
\end{equation}
whose proof is an easy consequence of  \eqref{eq:estimate_norm_2bis}. In particular, if { $[y]_B\geq \frac{R}{2} $} 
and { $(t,x)\in H_{{ \eps R^2,\frac{R}{8 c_B^2} }}(T,0)$}, 
then in light of the first bound for $\eps$ we get
\begin{equation}\label{st}
 [y-e^{(T-t)B}x]_B \geq { \frac{R}{8 c_B}}. 
\end{equation}
Hence, for such $(t,x)$
we get
\begin{align}
 v(t,x)&\leq 2c^{+}\int\limits_{\Rd}\G^{\Lambda}(t,x;T,y){\chi_{R}}(y) dy \leq
 2c^{+}\int\limits_{{ [y]_B\geq \frac{R}{2} }
 }\G^{\Lambda}(t,x;T,y)dy\\
    &=\frac{2c^{+}(2\pi)^{-\frac{d}{2}}}{\sqrt{|\mathbf{C}(T-t)|}}\int\limits_{{ [y]_B\geq \frac{R}{2} }
    } \exp\left(-\frac{1}{2}\langle \mathbf{C}^{-1}(T-t)(y-e^{(T-t)B}x),(y-e^{(T-t)B}x)\rangle\right) dy
\intertext{(by \eqref{st} and denoting by $\mathbf{C}$ the matrix in \eqref{eq:covariance_mean}
with $A_{0}=\Lambda I_{p_{0}}$ and $I_{p_{0}}$ being the $(p_{0}\times p_{0})$ identity  matrix)}
    &\leq \frac{2c^{+}(2\pi)^{-\frac{d}{2}}}{\sqrt{|\mathbf{C}(T-t)|}}\int\limits_{ [y-e^{(T-t)B}x]_B\geq {\frac{R}{8 c_B}}
    }\exp\left(-\frac{1}{2}\langle
    \mathbf{C}^{-1}(T-t)(y-e^{(T-t)B}x),(y-e^{(T-t)B}x)\rangle\right) dy
\intertext{(by the change of variables $\eta= D_0(\frac{1}{\sqrt{T-t}})(y-e^{(T-t)B}x)$ and the
homogeneity relation \eqref{eq:matrix_homogeneity})}
    &=\frac{2c^{+}(2\pi)^{-\frac{d}{2}}}{\sqrt{|\mathbf{C}(1)|}}\int\limits_{[\eta]_B\geq {\frac{R}{8 c_B \sqrt{T-t}}} 
    }\exp\left(-\frac{1}{2}\langle \mathbf{C}^{-1}(1)\eta,\eta\rangle\right)
    d\eta. \label{eq:ste400}
\end{align}
Since we are assuming $T-t \leq \eps R^2$, thanks to the 
second bound on $\varepsilon$ we have $[\eta]_B\geq {\frac{R}{8 c_B \sqrt{T-t}}} 
\geq 1$ and thus, there exists $C_0>0$ only dependent on $\mu,M,B$, such that
\begin{align}
\langle \mathbf{C}^{-1}(1)\eta,\eta\rangle \geq C_0|\eta|^2 & = { C_0 \sum_{j=1}^d \frac{|\eta_j|^2}{[\eta]^{2\sigma_j}_B} [\eta]^{2\sigma_j}_B = C_0 \sum_{j=1}^d \bigg(\frac{|\eta_j|^{{1}/{\s_j}}}{[\eta]_B} \bigg)^{2 \s_j} [\eta]^{2\sigma_j}_B }\\
& { \geq C_0 [\eta]^{2}_B \sum_{j=1}^d \bigg(\frac{|\eta_j|^{{1}/{\s_j}}}{[\eta]_B} \bigg)^{2 (2r+1)} \geq \frac{C_0}{d^{4r+1}} [\eta]^{2}_B \bigg( \sum_{j=1}^d \frac{|\eta_j|^{{1}/{\s_j}}}{[\eta]_B} \bigg)^{2 (2r+1)}    = \frac{C_0}{d^{4r+1}} [\eta]^{2}_B. }
\end{align}
{ Setting $C_1:= \frac{C_0}{d^{4r+1}}$ we get}
\begin{align}
 \int\limits_{[\eta]_B\geq { \frac{R}{8 c_B \sqrt{T-t}}} 
 }\!\! \exp\Big(-\frac{1}{2}\langle \mathbf{C}^{-1}(1)\eta,\eta\rangle\Big) d\eta
 &\leq \int\limits_{[\eta]_B\geq {\frac{R}{8 c_B \sqrt{T-t}}}
 }\exp\Big(-\frac{1}{2}C_1[\eta]^2_B \Big)d\eta\\
  &\leq \max_{[y]_B\geq {\frac{R}{8 c_B \sqrt{T-t}}}
  }
  \exp\Big(-\frac{1}{4}C_1[y]^2_B\Big)
  \int\limits_{[\eta]_B\geq {\frac{R}{8 c_B \sqrt{T-t}}}
  }\!\exp\Big(-\frac{1}{4}C_1[\eta]^2_B\Big) d\eta\\
  &{ \leq }\exp\Big(-\frac{ C_1 R^2}{2^8 c^2_B(T-t)}\ \Big) \int_{\Rd
  }\exp\Big(-\frac{1}{4}C_1[\eta]^2_B\Big) d\eta,
\end{align}
which, combined with \eqref{eq:ste400}, proves \eqref{eq:lem_cylinder3}.
 \end{proof}
\begin{proof}[Proof of Theorem \ref{th:error_estimates_taylor}]
Since the statement is a short-time estimate on a compact subset, it is enough to prove
\eqref{eq:error_estimate} for $(t,x)\in H_{\eps R^2,R}(T,\xi)\subseteq {]0,T[\times D}$ for
suitably small $\eps,R>0$.
Secondly, we can
suppose $\x=0$. In fact, if $u$ is a solution to $\Kc u=0$ in $H_{\eps R^2,R}(T,\xi)$ then $w(t,x)=u(t,
x-e^{-T B}\x)$ solves 
on $H_{\eps R^2,R}(T,0)$ the operator obtained through $\Kc$ by translating its coefficients.

Let us denote by ${u}^{\psi}$ the unique solution (with polynomial growth) to the Cauchy problem
\begin{equation}\label{equaz2}
  \begin{cases}
    \Kc f=0,\qquad &\text{on } [0,T[\times \Rd, \\
    f(T,\cdot)=\psi,\qquad &\text{on } \Rd,
  \end{cases}
\end{equation}
with $\psi$ as in Assumption \ref{assD}, and by $U_N^{\psi}$ its $N$-th order approximation as defined in Section \ref{sec:approximating1}. By triangular inequality we have
\begin{equation}\label{eq:ste405}
|u-U_N|\leq
|u-{u}^{\psi}|+|{u}^{\psi}-U_N^{\psi}|+|U_N^{\psi}-U_N|.
\end{equation}
We now aim at estimating each of the terms in the sum above.

We start with $|u-{u}^{\psi}|$.  
Let $v$ be the function appearing in Lemma \ref{lem:estimate_short_cylinder}. {By Proposition
\ref{prop:solution_cauchy_pb} and \eqref{eq:lem_cylinder1},}  $u-{u}^{\psi}$ and $v$ solve
%
$\Kc w=0$ in $ H_{\eps R^2,R}(T,0)$ and are continuous on $\overline{H_{\eps R^2,R}(T,0)}$. Moreover, $(u-{u}^{\psi})(T,x)=0$ if
$[x]_B<R$, {and thus, by setting 
 \begin{equation}
 C_1:=\max\limits_{\Sigma_{\eps R^2, R}(T,0)}|u-{u}^{\psi}|,
 \end{equation}
we get $|u-{u}^{\psi}|\leq C_1 v $ on $\p_P H_{\eps R^2,R}(T,0)$. Therefore, by the Feynman-Kac
theorem we have
\begin{align}
 \big| \big(u-u^{\psi}\big)(t,x) \big| = \big| \Eb_{t,x} \big[ (u-u^{\psi})\big(\tau,X_{\tau}\big) \big] \big| \leq C_1 \Eb_{t,x} \big[ v\big(\tau,X_{\tau}\big) \big] = C_1 v(t,x),
\end{align}
where $\tau$ denotes the exit time from $H_{\eps R^2,R}(T,0)$ of the process $(s,X_{s})$ starting
from $(t,x)\in H_{\eps R^2,R}(T,0)$.
By estimate \eqref{eq:lem_cylinder3} of Lemma \ref{lem:estimate_short_cylinder} we obtain}
\begin{equation}\label{eq:ste402}
\big|(u-{u}^{\psi})(t,x)\big|\leq C_1 C_2 \exp\left(-\frac{ R^2}{{C_2(T-t)}}\right), \qquad    (t,x)\in H_{\eps R^2,\frac{R}{8 c_B^2}}(T,0),
\end{equation}
with $C_2>0$ depending only on $M,\mu,B$.

We continue by estimating $|{u}^{\psi}-U_N^{\psi}|$. By Theorem \ref{th:error_estimates_taylor_global} there exists $C_3>0$, only dependent on $M,\m,B, T,N$ and $\|\psi\|_{C_{B}^{k}(\R^{d})}$, such that
\begin{align}\label{eq:ste406}
 \big|u^{\psi}(t,x)-U^{\psi}_N(t,x)\big| \le C_3 (T-t)^{\frac{N+k+1}{2}}, \qquad
 (t,x)\in[0,T]\times \R^{d}.
\end{align}

We conclude by estimating $|U_N^{\psi}-U_N|$. First observe that, by \eqref{e10}, for any multi-index $\alpha\in \N^d_0$ we have
\begin{equation}
 D^{\alpha}_x (u^{(\bar{z})}_0-u^{(\bar{z}),\psi}_0\big)  (t,x) = D^{\alpha}_x \int_{\Rd} \G^{(\bar{z})}_0(t,x;T,y) \big(\phi(y) - \psi(y)\big) dy  = \int_{\Rd} D^{\alpha}_x \G^{(\bar{z})}_0(t,x;T,y)\big(\phi(y) - \psi(y)\big) dy,
\end{equation}
with $\G^{(\bar{z})}_0$ as in \eqref{Gamma0}. Now, $\G^{(\bar{z})}_0$ is the fundamental solution of the constant-coefficients Kolmogorov operator $\Kc^{(\bar{z})}_0$ in \eqref{K0}, for which Assumptions \ref{assA}, \ref{assB} and \ref{assC} are trivially satisfied. Therefore, the bounds in Lemma \ref{lem:gaussian_estimates} also apply to $\G^{(\bar{z})}_0$ and yield
\begin{equation}\label{eq:ste401}
\big|D^{\alpha}_x (u^{(\bar{z}),\psi}_0 - u^{(\bar{z})}_0   \big)  (t,x) \big| \leq C_4 (T-t)^{-\frac{|\alpha|_B}{2}} w(t,x), \qquad \bar{z}\in\Rdd, \ (t,x)\in [0,T[\times \Rd,
\end{equation}
with
\begin{equation}
w(t,x):=  \int_{\Rd}  \G^{2M}(t,x;T,y) \big|\big(\phi(y) - \psi(y)\big)\big| dy,
\end{equation}
where $\G^{2M}$ is the fundamental solution of the Kolmogorov operator $\Kc^{2M}$ as in \eqref{eq:heatoperator}, and $C_4>0$ only depends on $M,\m,B,T,|\alpha|_B$.
Now note that, by \eqref{eq:def_ubar_N} and \eqref{eq:un}, we have
\begin{equation}
\big( U_N^{\psi}-U_N\big) (t,x)= \big(u^{(\bar{z}),\psi}_0 - u^{(\bar{z})}_0   \big)  (t,x) + \sum_{n=1}^{N} \Lc^{(\bar{z})}_n \big( u_0^{(\bar{z}),\psi} - u^{(\bar{z})}_0  \big)(t,x) \bigg|_{\bar{z}=(t,x)}.
\end{equation}
Thus by Lemma \ref{lem:Ln} with \eqref{eq:ste401} we get
\begin{equation}
\big|\big( U_N^{\psi}-U_N\big) (t,x)\big| \leq C_5 |w(t,x)|,\qquad  (t,x)\in [0,T[\times \Rd,
\end{equation}
where $C_5>0$ only depends on $M,\m,B,T$ and $N$. By repeating step by step the same proof of \eqref{eq:ste402} it is straightforward to obtain an estimate for $|w(t,x)|$ analogous to \eqref{eq:ste402}, which finally yields
\begin{equation}\label{eq:ste403}
\big|\big( U_N^{\psi}-U_N\big) (t,x)\big|\leq C_5 C_6 C_7 \exp\left(-\frac{ R^2}{{C_7(T-t)}}\right), \qquad    (t,x)\in H_{\eps R^2,\frac{R}{8 c_B^2}}(T,0),
\end{equation}
with $C_7>0$ depending only on $M,\m,B,T,N$, and
\begin{equation}
 C_6:=\max\limits_{\Sigma_{\eps R^2, R}(T,0)}\big|w \big|.
 \end{equation}
Plugging \eqref{eq:ste402}-\eqref{eq:ste406}-\eqref{eq:ste403} into \eqref{eq:ste405} yields
\eqref{eq:error_estimate} for $(t,x)\in H_{\eps R^2,\frac{R}{8 c_B^2}}(T,0)$ and concludes the
proof.

\end{proof}
%
%
\appendix

\section{Analytical approximation formulas}\label{appendix:proof}
\numberwithin{theorem}{section}

We show that the functions $u^{(\bar{z})}_n$ in \eqref{eq:v.0.pide}-\eqref{eq:v.n.pide} can be
explicitly computed at any order. It is clear that the leading term $u^{(\bar{z})}_0$ is given by
\begin{align}\label{e10}
 u^{(\bar{z})}_0(t,x) &=  \int_{\mathbb{R}^d} \Gamma^{(\bar{z})}_{0}(t,x;T,y) \phi(y)d y,\qquad (t,x) \in [0,T[\times \Rd , 
\end{align}
where $\Gamma^{(\bar{z})}_{0}$ is the Gaussian density in \eqref{Gamma0}. For $n\in\N$ with $n\leq N$, the
explicit representation for the correcting terms $u^{(\bar{z})}_n$ can be derived using the
following notable symmetry properties of $\Gamma^{(\bar{z})}_{0}$.
\begin{lemma}\label{lem:symmetries}
For any $x,y\in\mathbb{R}^d$, $ t<s$ and $\bar{z}=(\bar{t},\bar{x})\in \Rdd$, we have
\begin{align}\label{prop:deriv_bis}
 \nabla_x \Gamma^{(\bar{z})}_{0}(t,x;s,y)
    &= -e^{(s-t)B^*}\nabla_{y} \Gamma^{(\bar{z})}_{0}(t,x;s,y), \\
\label{prop:polyn1_bis}
 y\, \Gamma^{(\bar{z})}_{0}(t,x;s,y) & =  \Mc^{(\bar{z})}(s-t,x)\Gamma^{(\bar{z})}_{0}(t,x;s,y),
\end{align}
where $\Mc^{(\bar{z})}(t,x)$ is the operator defined as
\begin{equation} \label{eq:M}
 \Mc^{(\bar{z})}(t,x) = e^{tB}\left(x + \Mv_{\bar{z}}(t)\nabla_{x}\right),\qquad
 \Mv_{\bar{z}}(t)=   e^{-tB}\Cv_{\bar{z}}(t)e^{-tB^*}.
\end{equation}
\end{lemma}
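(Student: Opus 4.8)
The plan is to prove both identities by differentiating the explicit Gaussian kernel \eqref{Gamma0} directly, exploiting that the $x$-dependence of $\Gamma^{(\bar z)}_0(t,x;s,y)$ enters \emph{only} through the mean vector $e^{(s-t)B}x$. Fix $\bar z$ and $t<s$, and abbreviate $\tau:=s-t$, $m:=e^{\tau B}x$, $C:=\Cv_{\bar z}(\tau)$, so that
$$\Gamma^{(\bar z)}_0(t,x;s,y)=\frac{1}{\sqrt{(2\pi)^d|C|}}\,\exp\!\Big(-\tfrac12\big\langle C^{-1}(y-m),\,y-m\big\rangle\Big).$$
The only non-algebraic ingredient is that $C$ is symmetric and, for $\tau>0$, strictly positive definite: although $A(\bar z)$ is merely positive semidefinite when $p_0<d$, Assumption \ref{assB} (i.e.\ the H\"ormander condition for $\Kc^{(\bar z)}_0$) guarantees $\Cv_{\bar z}(\tau)>0$, so that $C^{-1}$ makes sense — this is precisely what makes \eqref{Gamma0} a genuine Gaussian density.

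For \eqref{prop:deriv_bis} I would compute the two gradients. Differentiating the quadratic form in $y$ gives $\nabla_y\Gamma^{(\bar z)}_0=-C^{-1}(y-m)\,\Gamma^{(\bar z)}_0$. For the $x$-gradient, the chain rule applied to $x\mapsto m=e^{\tau B}x$ yields $\nabla_x=e^{\tau B^*}\nabla_m$, and $\nabla_m\big(-\tfrac12\langle C^{-1}(y-m),y-m\rangle\big)=C^{-1}(y-m)$, hence $\nabla_x\Gamma^{(\bar z)}_0=e^{\tau B^*}C^{-1}(y-m)\,\Gamma^{(\bar z)}_0$. Comparing the two expressions gives $\nabla_x\Gamma^{(\bar z)}_0=-e^{\tau B^*}\nabla_y\Gamma^{(\bar z)}_0$, which is \eqref{prop:deriv_bis}.

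For \eqref{prop:polyn1_bis}, recall from \eqref{eq:M} that $\Mc^{(\bar z)}(\tau,x)=e^{\tau B}\big(x+\Mv_{\bar z}(\tau)\nabla_x\big)$ with $\Mv_{\bar z}(\tau)=e^{-\tau B}\,C\,e^{-\tau B^*}$. Plugging in the formula for $\nabla_x\Gamma^{(\bar z)}_0$ just obtained,
$$\Mc^{(\bar z)}(\tau,x)\,\Gamma^{(\bar z)}_0 = e^{\tau B}x\,\Gamma^{(\bar z)}_0 + e^{\tau B}\Mv_{\bar z}(\tau)e^{\tau B^*}\,C^{-1}(y-m)\,\Gamma^{(\bar z)}_0.$$
Since $e^{\tau B}\Mv_{\bar z}(\tau)e^{\tau B^*}=C$ directly from the definition of $\Mv_{\bar z}$, and $e^{\tau B}x=m$, the right-hand side telescopes to $m\,\Gamma^{(\bar z)}_0+(y-m)\,\Gamma^{(\bar z)}_0=y\,\Gamma^{(\bar z)}_0$, i.e.\ \eqref{prop:polyn1_bis}.

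The argument is purely computational, so there is no real obstacle; the two places that require care are the transpose in the chain-rule identity $\nabla_x=e^{\tau B^*}\nabla_m$ and the matching cancellation $e^{\tau B}\Mv_{\bar z}(\tau)e^{\tau B^*}=\Cv_{\bar z}(\tau)$, the latter being exactly the reason $\Mv_{\bar z}$ is defined by conjugating $\Cv_{\bar z}$ with $e^{-\tau B}$. Alternatively, both identities are instances of the fact that $\Gamma^{(\bar z)}_0(t,x;s,\cdot)$ is the law of $e^{\tau B}x+\int_0^\tau e^{\rho B}\sqrt{A(\bar z)}\,dW_\rho$, a Gaussian vector with mean $e^{\tau B}x$ and covariance $\Cv_{\bar z}(\tau)$, so that a shift of $x$ acts on the law through the linear map $e^{\tau B}$; but the direct differentiation above is the shortest route and is what I would write up.
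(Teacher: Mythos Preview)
Your proof is correct and is exactly the direct computation the paper has in mind; the paper's own proof consists of the single sentence ``Using the explicit expression of $\Gamma^{(\bar{z})}_{0}$, the proof is a direct computation.'' You have simply filled in the details of that computation, and they are all right.
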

\begin{proof}
Using the explicit expression of $\Gamma^{(\bar{z})}_{0}$, the proof is a direct computation.
\end{proof}
The following result provides an explicit representation of $u^{(\bar{z})}_n$ in
\eqref{eq:v.expand}: remarkably, it can be written as a finite sum of spatial derivatives acting
on $u^{(\bar{z})}_0$.
\begin{theorem}\label{th:un_general_repres}
Let Assumptions \ref{assA}, \ref{assB} and \ref{assC} be in force. Then, for any $n\in\N$ with $
n\leq N$, and for any $\bar{z}\in\Rdd$,  we have
\begin{equation}\label{eq:un}
 u^{(\bar{z})}_n(t,x)=  \Lc^{(\bar{z})}_n(t,T,x) u^{(\bar{z})}_0(t,x),\qquad (t,x)\in  [0,T[\times
 \Rd.
\end{equation}
In \eqref{eq:un}, $\Lc^{(\bar{z})}_n(t,T,x)$ denotes the differential operator
\begin{align}\label{eq:def_Ln}
  \Lc^{(\bar{z})}_n(t,T,x)=  \sum_{h=1}^n \int_{t}^T d s_1 \int_{s_1}^T d s_2 \cdots \int_{s_{h-1}}^T d s_h
      \sum_{i\in I_{n,h}}\Gc^{(\bar{z})}_{i_{1}}(t,s_1,x) \cdots \Gc^{(\bar{z})}_{i_{h}}(t,s_h,x) ,
\end{align}
where
\begin{align}\label{eq:def_Ln_bis}
 I_{n,h}=  \{i=(i_{1},\dots,i_{h})\in\mathbb{N}^{h} \mid i_{1}+\dots+i_{h}=n\},\qquad 1\le h \le n,
\end{align}
and 
\begin{align}
 \Gc^{(\bar{z})}_{n}(t,s,x)  =&\ \frac{1}{2}\sum_{i,j=1}^{p_0}
 \big(\TT_n  (a_{ij},\bar{z}) - \TT_{n-1}(a_{ij},\bar{z})\big) \big(s,\Mc^{(\bar{z})}(s-t,x)\big) \big(e^{-(s-t)B^*}\nabla_x\big)_i \big(e^{-(s-t)B^*}\nabla_x\big)_j\\
  &  + \sum_{i=1}^{p_0}  \big(\TT_{n-1} (a_{i},\bar{z}) - \TT_{n-2} (a_{i},\bar{z}) \big)\big(s,\Mc^{(\bar{z})}(s-t,x)\big) \big(e^{-(s-t)B^*}\nabla_x\big)_i,  \label{def_Gn}
\end{align}
with $\Mc^{(\bar{z})}(t,x)$ as in \eqref{eq:M} and, by convention, $\TT_{-1} f\equiv 0$.
\end{theorem}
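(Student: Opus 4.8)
The plan is to prove the explicit representation \eqref{eq:un}--\eqref{eq:def_Ln}--\eqref{def_Gn} by induction on $n$, using the recursive Cauchy problems \eqref{eq:v.0.pide}--\eqref{eq:v.n.pide} together with Duhamel's principle and the symmetry identities in Lemma \ref{lem:symmetries}. The base case $n=0$ is just \eqref{e10}. For the inductive step, I would first apply the variation-of-constants (Duhamel) formula to \eqref{eq:v.n.pide}: since $\Kc^{(\zb)}_0$ has the Gaussian fundamental solution $\Gamma^{(\zb)}_0$, the solution with zero terminal datum is
\begin{equation}\label{eq:duhamel_plan}
 u^{(\zb)}_n(t,x) = \sum_{h=1}^n \int_t^T \!\! \int_{\Rd} \Gamma^{(\zb)}_0(t,x;s,\xi)\, \big(\Kc^{(\zb)}_h - \Kc^{(\zb)}_{h-1}\big)\, u^{(\zb)}_{n-h}(s,\xi)\, d\xi\, ds,
\end{equation}
and then insert the inductive hypothesis $u^{(\zb)}_{n-h}(s,\xi) = \Lc^{(\zb)}_{n-h}(s,T,\xi)\, u^{(\zb)}_0(s,\xi)$.

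The heart of the matter is to turn the integral operator $\xi \mapsto \int_{\Rd} \Gamma^{(\zb)}_0(t,x;s,\xi)\, g(s,\xi)\, d\xi$, applied to a function $g$ that is a polynomial-in-$\xi$ coefficient times derivatives of $u^{(\zb)}_0$, into a \emph{differential} operator in $x$ acting on $u^{(\zb)}_0(t,x)$. This is exactly what the two symmetry properties of Lemma \ref{lem:symmetries} accomplish: identity \eqref{prop:deriv_bis} lets us replace $\nabla_\xi$ hitting $u^{(\zb)}_{n-h}$ (after integration by parts, picking up the factor $\Kc^{(\zb)}_h - \Kc^{(\zb)}_{h-1}$, whose second-order part is $\frac12\sum (\TT_h - \TT_{h-1})(a_{ij},\zb)\,\p_{\xi_i\xi_j}$) by $-e^{(s-t)B^*}\nabla_x$ acting on $\Gamma^{(\zb)}_0$; and identity \eqref{prop:polyn1_bis} lets us replace each occurrence of the variable $\xi$ (which appears polynomially in $\TT_h(a_{ij},\zb) - \TT_{h-1}(a_{ij},\zb)$, by the structure of the intrinsic Taylor polynomial \eqref{eq:tay_int}) by the operator $\Mc^{(\zb)}(s-t,x)$. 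Carrying this through, the single term with index $h$ contributes $\int_t^T \Gc^{(\zb)}_h(t,s,x)\, u^{(\zb)}_{n-h}(s,x)\, ds$ with $\Gc^{(\zb)}_h$ as in \eqref{def_Gn}; summing over $h$ and unfolding the inductive formula for $\Lc^{(\zb)}_{n-h}$ produces the nested time-integrals and the index set $I_{n,h}$ in \eqref{eq:def_Ln}.

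Two bookkeeping points need care. First, one must check that the operators $\Gc^{(\zb)}_h(t,s,x)$ and the $\Lc$'s all act on the spatial variable $x$ only, and that passing $\Mc^{(\zb)}(s-t,x)$ and the $x$-derivatives through the time integrals and through the remaining operators is legitimate --- this is where the Gaussian bounds and the regularity $u^{(\zb)}_0 \in C^\infty$ (the leading term is smooth, being a Gaussian convolution) guarantee everything is well defined and differentiation under the integral sign is justified. Second, one must verify the combinatorics: that grouping the $h$ nested Duhamel iterations according to how the total order $n$ is partitioned among the successive correction operators yields precisely the composition $\Gc^{(\zb)}_{i_1}\cdots\Gc^{(\zb)}_{i_h}$ over $i\in I_{n,h}$ with the ordered simplex of times $t<s_1<\cdots<s_h<T$. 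The main obstacle, then, is not a deep estimate but the careful algebraic manipulation in this ``symmetrization'' step: keeping track of the integration-by-parts boundary terms (which vanish by the Gaussian decay), the exact form in which $e^{-(s-t)B^*}\nabla_x$ and $\Mc^{(\zb)}(s-t,x)$ enter, and confirming that the convention $\TT_{-1} f \equiv 0$ (and $\TT_{-2} f \equiv 0$ for the first-order coefficients) correctly truncates the recursion. I would present the $n=1$ case explicitly first, to fix the pattern, and then write the general inductive step.
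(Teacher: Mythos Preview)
Your approach---Duhamel plus the Gaussian symmetries of Lemma \ref{lem:symmetries}, organized inductively, with the $n=1$ case written out first---is exactly the paper's strategy. However, there is a concrete slip in your inductive step. After applying the key identity \eqref{eq:fund_prop} to the $h$-th summand of the Duhamel representation you obtain
\[
\int_t^T \Gc^{(\zb)}_h(t,s,x)\,\Big(\int_{\Rd}\Gamma^{(\zb)}_0(t,x;s,\xi)\,u^{(\zb)}_{n-h}(s,\xi)\,d\xi\Big)\,ds,
\]
and the inner integral is \emph{not} $u^{(\zb)}_{n-h}(s,x)$: for $n-h\ge1$ the function $u^{(\zb)}_{n-h}$ does not solve the homogeneous $\Kc^{(\zb)}_0$-equation, so the semigroup does not reproduce it. Consequently, simply ``unfolding $\Lc^{(\zb)}_{n-h}(s,T,x)$'' at this stage would yield operators $\Gc^{(\zb)}_{i_j}$ with first argument $s$, not $t$, which does not match \eqref{eq:def_Ln}.

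The missing ingredient is the Chapman--Kolmogorov identity for $\Gamma^{(\zb)}_0$, which the paper flags explicitly as one of the three pillars of the proof. The clean way to run the argument is to unfold the Duhamel recursion completely \emph{before} applying the symmetries: this expresses $u^{(\zb)}_n$ as a sum over compositions $i\in I_{n,h}$ of iterated spatial integrals involving products $\Gamma^{(\zb)}_0(t,x;s_1,\xi_1)\,\Gamma^{(\zb)}_0(s_1,\xi_1;s_2,\xi_2)\cdots\Gamma^{(\zb)}_0(s_{h-1},\xi_{h-1};s_h,\xi_h)\,u^{(\zb)}_0(s_h,\xi_h)$. One then alternates \eqref{eq:fund_prop} (to extract $\Gc^{(\zb)}_{i_1}(t,s_1,x)$ acting on $x$) with Chapman--Kolmogorov (to collapse $\int\Gamma^{(\zb)}_0(t,x;s_1,\xi_1)\Gamma^{(\zb)}_0(s_1,\xi_1;s_2,\xi_2)\,d\xi_1$ into $\Gamma^{(\zb)}_0(t,x;s_2,\xi_2)$). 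Each collapse resets the base time to $t$, which is precisely why every $\Gc^{(\zb)}_{i_j}$ in \eqref{eq:def_Ln} carries the same first argument $t$. With this correction your outline goes through and coincides with the paper's proof.
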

Next, we sketch the proof of Theorem \ref{th:un_general_repres} that is based on the symmetry
properties of the Gaussian density $\Gamma_{0}$ in \eqref{Gamma0}, combined with an extensive use
of other very general relations such as the Duhamel's principle and the
Chapman-Kolmogorov equation. 
Since the choice of $\bar{z}$ is unimportant through this section, we drop the explicit dependence on $\bar{z}$
in the following formulas.
First, we generalize formula \eqref{prop:polyn1_bis} to polynomial functions $p$ 
with time-dependent coefficients,
that is $p=p(t,\cdot)$ is a polynomial for every fixed $t\in\R$: this will be used to deal with the
operators $\Kc_n$ in \eqref{e1_ter} that have coefficients of this form.
\begin{proposition}\label{proposition_welldef}
For any $t,s,s_{1}\in[0,T]$, with $t<s$, $x,y \in\mathbb{R}^d$, we have
\begin{equation}\label{prop:polin_xi}
 p(s_{1},y) \Gamma_{0}(t,x;s,y)=\, p\left(s_{1},\Mc(s-t,x)\right) \Gamma_{0}(t,x;s,y) .
\end{equation}
\end{proposition}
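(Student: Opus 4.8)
The plan is to reduce everything to the single--monomial identity \eqref{prop:polyn1_bis} of Lemma \ref{lem:symmetries} by linearity and an induction on the degree. Write the polynomial as a finite sum $p(s_{1},y)=\sum_{|\beta|\le m}c_{\beta}(s_{1})\,y^{\beta}$, where the coefficients $c_{\beta}(s_{1})$ depend on $s_{1}$ but not on $x$. Since the asserted identity \eqref{prop:polin_xi} is linear in $p$, and since each $c_{\beta}(s_{1})$ commutes with every differential operator in the variable $x$, it is enough to prove the monomial version
\begin{equation}\label{eq:plan_monomial}
 y^{\beta}\,\Gamma_{0}(t,x;s,y)=\Mc(s-t,x)^{\beta}\,\Gamma_{0}(t,x;s,y),\qquad \beta\in\Ndzero.
\end{equation}

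Before running the induction I would record that the $d$ first--order operators $\Mc_{1}(s-t,x),\dots,\Mc_{d}(s-t,x)$, i.e. the components of $\Mc(s-t,x)=e^{(s-t)B}\big(x+\Mv(s-t)\nabla_{x}\big)$, mutually commute, so that $\Mc(s-t,x)^{\beta}$ is well defined without any ordering ambiguity. Indeed, $\Mv(s-t)=e^{-(s-t)B}\Cv(s-t)e^{-(s-t)B^{*}}$ is a symmetric matrix because $\Cv$ is, hence the operators $\Nc_{k}:=x_{k}+\big(\Mv(s-t)\nabla_{x}\big)_{k}$ satisfy $[\Nc_{k},\Nc_{\ell}]=\Mv_{k\ell}(s-t)-\Mv_{\ell k}(s-t)=0$; left multiplication by the constant matrix $e^{(s-t)B}$ preserves this commutativity.

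Then I would prove \eqref{eq:plan_monomial} by induction on $|\beta|$. The case $|\beta|=0$ is trivial and $|\beta|=1$ is precisely \eqref{prop:polyn1_bis}. For the inductive step, choose $i$ with $\beta_{i}\ge 1$ and write $\beta=\gamma+\mathbf{e}_{i}$. Applying the scalar operator $\Mc_{i}(s-t,x)$ to the inductive identity $y^{\gamma}\Gamma_{0}=\Mc(s-t,x)^{\gamma}\Gamma_{0}$, and using that $\Mc_{i}(s-t,x)$ only involves multiplication by functions of $x$ and differentiation in $x$ while $y^{\gamma}$ does not depend on $x$, I get on the left $\Mc_{i}(s-t,x)\big(y^{\gamma}\Gamma_{0}\big)=y^{\gamma}\,\Mc_{i}(s-t,x)\Gamma_{0}=y^{\gamma}y_{i}\Gamma_{0}=y^{\beta}\Gamma_{0}$ by \eqref{prop:polyn1_bis}, and on the right $\Mc_{i}(s-t,x)\big(\Mc(s-t,x)^{\gamma}\Gamma_{0}\big)=\Mc(s-t,x)^{\beta}\Gamma_{0}$ by the commutativity just established. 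This proves \eqref{eq:plan_monomial}; multiplying by $c_{\beta}(s_{1})$ and summing over $\beta$ then yields \eqref{prop:polin_xi}.

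I do not expect a genuine obstacle here: the only steps requiring a moment's care are the commutativity of the components of $\Mc(s-t,x)$, which is why I would isolate it as a preliminary, and the bookkeeping in the induction that $y^{\gamma}$ is constant with respect to the $x$--derivatives hidden inside $\Mc_{i}(s-t,x)$.
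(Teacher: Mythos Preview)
Your argument is correct, and it is in fact cleaner than the paper's on the key point of commutativity. The paper establishes that the components $\Mc_{i}(s-t,x)$ commute \emph{only when applied to $\Gamma_{0}$ and its $x$--derivatives}, and does so indirectly: it writes $D_{x}^{\beta}\Gamma_{0}=S_{y}^{\beta}(s-t)\Gamma_{0}$ for an operator $S_{y}^{\beta}$ acting in $y$, then shuffles $\Mc_{i}\Mc_{j}D_{x}^{\beta}\Gamma_{0}$ through this identity and \eqref{prop:polyn1_bis} to reach $\Mc_{j}\Mc_{i}D_{x}^{\beta}\Gamma_{0}$. You instead observe that $\Mv(s-t)=e^{-(s-t)B}\Cv(s-t)e^{-(s-t)B^{*}}$ is symmetric, whence $[\Nc_{k},\Nc_{\ell}]=\Mv_{k\ell}-\Mv_{\ell k}=0$ and the $\Mc_{i}$ commute as abstract first--order operators; this is both shorter and stronger, and makes the subsequent induction on $|\beta|$ transparent. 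The paper even includes the parenthetical remark that commutativity ``is not true in general when they are applied to a generic function'', which your computation shows is overly cautious. After commutativity is settled, both proofs reduce to \eqref{prop:polyn1_bis} and linearity in the same way.
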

\begin{proof} Let us recall that operator $\Mc(t,x)$ acts only on the variable $x$. First, we prove that the components $\Mc_j (t,x)$, $i=1,\dots,d$,
commute when applied to $\Gamma_{0}=\Gamma_{0}(t,x;s,y)$ and to its derivatives (notice however
that this is not true in general when they are applied to a generic function). Notice also that
formula \eqref{prop:deriv_bis} expresses an $x$-derivative as a linear combination of
$y$-derivatives with coefficients that depend only on $t$ and $s$. This is obviously true also for
higher orders and we express it through the differential operator $S_{y}^{\b}(s-t)$, acting on
$y$, defined by
 \begin{equation}
 D^{\b}_{x}\Gamma_{0}(t,x;s,y)=S_{y}^{\b}(s-t)\Gamma_{0}(t,x;s,y).
 \end{equation}
Now we have
\begin{align}
  \Mc_i (s-t,x)\Mc_j (s-t,x)D^{\b}_{x}\Gamma_{0}&=\Mc_i (s-t,x)\Mc_j (s-t,x)S_{y}^{\b}(s-t)\Gamma_{0}
  &  &\text{(by the definition above)}\\
  &=S_{y}^{\b}(s-t)(\Mc_i (s-t,x)\Mc_j (s-t,x)\Gamma_{0}) & &\text{($S_{y}^{\b}$ and $\Mc_j$ commute)}\\
  &=S_{y}^{\b}(s-t)(\Mc_i (s-t,x)y_j\Gamma_{0}) &  &\text{(by \eqref{prop:polyn1_bis})}\\
  &=S_{y}^{\b}(s-t)(y_j\Mc_i(s-t,x)\Gamma_{0})\\
  &=S_{y}^{\b}(s-t)(y_j y_i\Gamma_{0}) & &\text{(again, by \eqref{prop:polyn1_bis})} \\
  &=\Mc_j (s-t,x)\Mc_i(s-t,x)D^{\b}_{x}\Gamma_{0}. & &\text{{(by reversing the steps above)}}
\end{align}
Since $p(s_1,\cdot)$ is a polynomial by definition, we therefore have that the operators
$p\left(s_1,\Mc(s-t,x)\right)$ are defined unambiguously when applied to $\Gamma_{0}(t,x;s,y)$ and
to its derivatives. Moreover, clearly \eqref{prop:polin_xi} is now a straightforward consequence
of \eqref{prop:polyn1_bis}.
\end{proof}
\begin{remark} By Proposition \ref{proposition_welldef}, the operators $\Gc_{n}(t,s,x)$ are defined unambiguously when applied to
$\Gamma_{0}=\Gamma_{0}(t,x;s,y)$, to its derivatives and, more generally, {by the representation
formula \eqref{e10}}, to solutions of the Cauchy problem \eqref{eq:v.0.pide}.
\end{remark}
%
The next proposition, essentially based on the symmetries of Lemma \ref{lem:symmetries}, is the
key of the proof of Theorem \ref{th:un_general_repres}. 
\begin{proposition}
For any $x,y\in\mathbb{R}^d$, $t<s$ and $n\in\N$ with $n\le N$, we have
\begin{equation}\label{eq:fund_prop}
\int_{\mathbb{R}^{d}} \Gamma_{0}(t,x;s,\xi){\big(\left(\Kc_n -\Kc_{n-1}\right)f\big)}(s,\xi)  d
\xi =\Gc_{n}(t,s,x)\int_{\mathbb{R}^{d}} \Gamma_{0}(t,x;s,\xi) f(\xi)  d \xi ,
\end{equation}
for any $f\in C^2_{0}(\mathbb{R}^d)$.
\end{proposition}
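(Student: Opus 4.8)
The plan is to verify \eqref{eq:fund_prop} by a direct computation that pushes the operator $\Kc_n-\Kc_{n-1}$ through the Gaussian kernel $\Gamma_0(t,x;s,\xi)$ using the symmetry relations of Lemma \ref{lem:symmetries}. Recall that, by \eqref{e1_ter}, the difference $\Kc_n-\Kc_{n-1}$ is a \emph{purely second (and first) order} operator in $\xi$ with polynomial-in-$\xi$ coefficients, namely
\begin{equation*}
 \big(\Kc_n-\Kc_{n-1}\big)f(s,\xi)=\frac12\sum_{i,j=1}^{p_0}\big(\TT_n(a_{ij},\bar z)-\TT_{n-1}(a_{ij},\bar z)\big)(s,\xi)\,\p_{\xi_i\xi_j}f(s,\xi)+\sum_{i=1}^{p_0}\big(\TT_{n-1}(a_i,\bar z)-\TT_{n-2}(a_i,\bar z)\big)(s,\xi)\,\p_{\xi_i}f(s,\xi),
\end{equation*}
the $Y$ term having cancelled. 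So I would start by substituting this into the left-hand side of \eqref{eq:fund_prop} and treating each monomial of the two bracketed Taylor increments separately.

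The core of the argument is a two-step transfer. First, for the derivatives $\p_{\xi_i\xi_j}f$ and $\p_{\xi_i}f$, I would integrate by parts in $\xi$ to move the derivatives onto $\Gamma_0$ and the polynomial coefficient; since $f\in C^2_0(\R^d)$ there are no boundary terms. Then I would use \eqref{prop:deriv_bis} to rewrite each $\nabla_\xi\Gamma_0(t,x;s,\xi)$ as $-e^{-(s-t)B^*}\nabla_x\Gamma_0(t,x;s,\xi)$ — this accounts for the factors $(e^{-(s-t)B^*}\nabla_x)_i$ appearing in \eqref{def_Gn}. Second, the polynomial coefficients in $\xi$ get converted into the operator $\Mc(s-t,x)$ acting on $x$ via Proposition \ref{proposition_welldef}, which is exactly the reason that proposition was stated with time-dependent polynomial coefficients: the Taylor increments $\TT_n(a_{ij},\bar z)-\TT_{n-1}(a_{ij},\bar z)$ evaluated at $(s,\xi)$ are polynomials in $\xi$ whose coefficients depend on $s$ (through the factors $(s-\bar t)^k$), so one applies \eqref{prop:polin_xi} with $s_1=s$. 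After these substitutions, every $\xi$-dependence has been turned into operators in $x$ that can be pulled out of the $\xi$-integral, leaving precisely $\Gc_n(t,s,x)\int_{\R^d}\Gamma_0(t,x;s,\xi)f(\xi)\,d\xi$ once one matches terms against \eqref{def_Gn}.

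The one genuine subtlety — and the step I expect to be the main obstacle — is the \emph{ordering} of the operations: the polynomial coefficient, the two $\xi$-derivatives, and the kernel all interact, and one must be careful that the operator $\Mc(s-t,x)$ and the derivative operators $(e^{-(s-t)B^*}\nabla_x)_i$ are applied in the order prescribed by \eqref{def_Gn}, i.e. coefficient first (outermost), derivatives last. The point is that, although $\Mc_i$ and $\Mc_j$ need not commute as operators on arbitrary functions, Proposition \ref{proposition_welldef} (via its commutation lemma) guarantees that all these operators act \emph{unambiguously} on $\Gamma_0$ and its $x$-derivatives, so the manipulations are consistent; one still has to track, term by term in the Leibniz expansion of the integration by parts, that no extra cross-terms survive and that the resulting expression is literally $\Gc_n(t,s,x)$ as written. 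I would handle the second-order piece in detail and remark that the first-order piece is entirely analogous (and in fact simpler, needing one integration by parts instead of two).
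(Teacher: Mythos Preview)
Your proposal is correct and uses exactly the same ingredients as the paper (the symmetries \eqref{prop:deriv_bis}--\eqref{prop:polyn1_bis} and Proposition \ref{proposition_welldef}). The only difference is the \emph{order} of operations, and the paper's order neatly dissolves what you flag as the main obstacle. Rather than integrating by parts first (which forces the $\xi$-derivatives onto both $\Gamma_0$ and the polynomial coefficient and produces Leibniz cross-terms), the paper first applies Proposition \ref{proposition_welldef} to replace the polynomial factor $(\TT_n(a_{ij},\bar z)-\TT_{n-1}(a_{ij},\bar z))(s,\xi)$ by the $x$-operator $(\TT_n(a_{ij},\bar z)-\TT_{n-1}(a_{ij},\bar z))(s,\Mc(s-t,x))$, which is then pulled outside the $\xi$-integral. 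At that point the remaining integrand is just $\Gamma_0(t,x;s,\xi)\,\p_{\xi_i\xi_j}f(\xi)$, integration by parts is clean with no cross-terms, and \eqref{prop:deriv_bis} converts $\p_{\xi_i\xi_j}\Gamma_0$ into $(e^{-(s-t)B^*}\nabla_x)_i(e^{-(s-t)B^*}\nabla_x)_j\Gamma_0$, yielding \eqref{def_Gn} directly. Your ordering also works, but the cross-terms do not ``survive'' as zero: they recombine with the main term via the commutation relation $[(e^{-(s-t)B^*}\nabla_x)_i,\Mc_k(s-t,x)]=\delta_{ik}$, which you would need to establish and iterate. Swapping the order of the first two steps eliminates that computation entirely.
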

\begin{proof}
To keep formulas at a reasonable size we suppose that the functions $a_i$, $i=1,\dots,p_0,$ in
\eqref{Ac} are identically zero. By the definition \eqref{e1_ter} 
we have
\begin{align}
 &\int_{\mathbb{R}^d} \Gamma_{0}(t,x;s,\xi)\big((\Kc_n -\Kc_{n-1})f\big)(s,\xi)  d \xi \\
 &= \frac{1}{2}\sum_{i,j=1}^{p_0} \int_{\mathbb{R}^d}
 \left(\TT_n\left(a_{ij},\zb\right)-\TT_{n-1}(a_{ij},\zb)\right)(s,\xi) \Gamma_{0}(t,x;s,\xi)\p_{\xi_i\xi_j} f(\xi)d \xi\\
 &= \frac{1}{2}\sum_{i,j=1}^{p_0}
 \left(\TT_n\left(a_{ij},\zb\right)-\TT_{n-1}(a_{ij},\zb)\right)\big(s,\Mc(s-t,x)\big) \int_{\mathbb{R}^d}
 \Gamma_{0}(t,x,y;s,\xi,\omega) \p_{\xi_i\xi_j} f(\xi)  d \xi &&\text{(by
 \eqref{prop:polin_xi})}\\
 &= \frac{1}{2}\sum_{i,j=1}^{p_0}\left(\TT_n\left(a_{ij},\zb\right)-\TT_{n-1}(a_{ij},\zb)\right)\big(s,\Mc(s-t,x)\big)
 \int_{\mathbb{R}^d} \p_{\xi_i\xi_j} \Gamma_{0}(t,x;s,\xi) f(\xi)  d \xi  &&\text{(by parts)}\\
 &= \Gc_{n}(t,s,x) \int_{\mathbb{R}^d}  \Gamma_{0}(t,x;s,\xi) f(\xi) d \xi. &&\text{(by \eqref{prop:deriv_bis} and
 \eqref{def_Gn})}
\end{align}
\end{proof}
The proof of Theorem
\ref{th:un_general_repres} consists of mostly formal and tedious computations that are totally
analogous to those given for the parabolic case in Section 5 in \cite{LPP4}.
This may not be surprising since our framework contains the parabolic one as a special case. Therefore, we only give a proof for
$n=1$, 
which still sheds light on the origin of the operators $\Lc_n$.

By definition, $u_1$ is the solution of the Cauchy problem \eqref{eq:v.n.pide} with $n=1$. By
Duhamel's principle we have
\begin{align}
&u_1(t,x)=\int_t^T \int_{\mathbb{R}^d} \Gamma_{0}(t,x;s,\xi)\left((\Kc_1-\Kc_0) u_0\right)(s,\xi)
\dd\xi d s
\\ &=\int_t^T \Gc_{1}(t,s,x) \int_{\mathbb{R}^d} \Gamma_{0}(t,x;s,\xi)\, u_0(s,\xi) d \xi {d s }& &\text{(by \eqref{eq:fund_prop} with $n=1$)} \\ &=\int_t^T \Gc_{1}(t,s,x)
\int_{\mathbb{R}^d} \Gamma_{0}(t,x;s,\xi) \int_{\mathbb{R}^d} \Gamma_{0}(s,\xi;T,y) \phi(y) d y {d
\xi  d s} & &\text{(by \eqref{e10})} \\ &=  \int_t^T \Gc_{1}(t,s,x) \int_{\mathbb{R}^d} \phi(y)
\int_{\mathbb{R}^d} \Gamma_{0}(t,x;s,\xi)\, \Gamma_{0}(s,\xi;T,y) d \xi { d y d s } &
&\text{(Fubini's theorem)} \\ &=\int_t^T \Gc_{1}(t,s,x) d s\, u_0(t,x) &
&\text{(Chapman-Kolmogorov and \eqref{e10})}\\ &=\Lc_1(t,T,x) u_0(t,x). & &\text{(by
\eqref{eq:def_Ln}) }
\end{align}
$\hfill \Box$
\section{Proof of Proposition \ref{prop:der_un}}\label{prel}
In this section we prove some preliminary estimates
on the spatial derivatives of solutions of constant coefficient-Kolmogorov operators: in
particular, we prove estimates for the derivatives of $u_{n}^{(\zb)}$ defined by
\eqref{eq:v.0.pide}-\eqref{eq:v.n.pide}. Throughout this section $\zb\in\Rdd$ is fixed.
\begin{proposition}\label{l1}
Let $k\in[0,2r+1]$, $\b\in\N_{0}^{d}$ with $|\b|_B>0$. If $\psi\in C_{B}^{k}(\R^{d})$ then the
solution $u^{(\zb)}_0$ of the Cauchy problem \eqref{eq:v.0.pide} satisfies
\begin{align}\label{e1bb}
  \big|D_{x}^{\beta}u^{(\zb)}_0(t,x)\big|\le C (T-t)^{\frac{k-|\b|_B}{2}},\qquad 0\leq t<T,\ x\in\mathbb{R}^d,
\end{align}
where $C$ is a positive constant that depends only on $M,\m,B,T,\b$ and
$\|\psi\|_{C^{k}_{B}(\R^{d})}$.
\end{proposition}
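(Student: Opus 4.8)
The plan is to use that $u^{(\zb)}_0$ is the Gaussian convolution
\begin{equation*}
 u^{(\zb)}_0(t,x)=\int_{\Rd}\Gamma^{(\zb)}_0(t,x;T,y)\,\psi(y)\,dy
\end{equation*}
of $\psi$ with the fundamental solution $\Gamma^{(\zb)}_0$ in \eqref{Gamma0} of the \emph{constant-coefficient} Kolmogorov operator $\Kc^{(\zb)}_0$ in \eqref{K0}, which satisfies Assumptions \ref{assA}, \ref{assB}, \ref{assC} trivially (with ellipticity constants carried over from $A_0(\zb)$). Consequently $\Gamma^{(\zb)}_0$ obeys the Gaussian bounds of Proposition \ref{lem:gaussian_estimates}: for every multi-index $\g$, every $c>1$ and every $k\ge 0$,
\begin{equation*}
 [y-e^{(T-t)B}x]^{k}_B\,\big|D^{\g}_x\Gamma^{(\zb)}_0(t,x;T,y)\big|\le C\,(T-t)^{\frac{k-|\g|_B}{2}}\,\Gamma^{cM}(t,x;T,y),\qquad 0\le t<T,\ x,y\in\Rd;
\end{equation*}
since $\Gamma^{(\zb)}_0$ is $C^\infty$ no restriction on $|\g|_B$ is needed (and only $|\g|_B\le N+2$ is ever used in the sequel). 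Integrating this in $y$ and using $\int_{\Rd}\Gamma^{cM}(t,x;T,y)\,dy=1$ already disposes of the case $k=0$: differentiating under the integral, $\big|D^{\b}_x u^{(\zb)}_0(t,x)\big|\le\|\psi\|_{L^\infty(\Rd)}\int_{\Rd}\big|D^{\b}_x\Gamma^{(\zb)}_0(t,x;T,y)\big|\,dy\le C\,(T-t)^{-|\b|_B/2}$.

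For $k\in\,]0,2r+1]$ I would use a subtraction trick. Since $y\mapsto\Gamma^{(\zb)}_0(t,x;T,y)$ is a probability density for every $(t,x)$, we have $\int_{\Rd}\Gamma^{(\zb)}_0(t,x;T,y)\,dy\equiv 1$, so (differentiating under the integral, legitimate by the dominating bounds above) $D^{\b}_x\int_{\Rd}\Gamma^{(\zb)}_0(t,x;T,y)\,dy=0$ because $|\b|_B>0$. Fixing $x$, subtracting the $x'$-independent constant $\psi\big(e^{(T-t)B}x\big)$ and evaluating the $x'$-derivative at $x'=x$ gives
\begin{equation*}
 D^{\b}_x u^{(\zb)}_0(t,x)=\int_{\Rd}D^{\b}_x\Gamma^{(\zb)}_0(t,x;T,y)\,\big(\psi(y)-\psi(e^{(T-t)B}x)\big)\,dy.
\end{equation*}
Bounding $\big|\psi(y)-\psi(e^{(T-t)B}x)\big|\le\|\psi\|_{C^k_B(\Rd)}\,[y-e^{(T-t)B}x]^{k}_B$ via Definition \ref{Calpha}, inserting the Gaussian estimate above with the weight $[y-e^{(T-t)B}x]^{k}_B$ absorbed, and integrating with $\int_{\Rd}\Gamma^{cM}(t,x;T,y)\,dy=1$ yields $\big|D^{\b}_x u^{(\zb)}_0(t,x)\big|\le C\,(T-t)^{(k-|\b|_B)/2}$ with $C$ depending only on $M,\m,B,T,\b$ and $\|\psi\|_{C^k_B(\Rd)}$, which is the claim.

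The only substantive point is the weighted Gaussian derivative estimate for $\Gamma^{(\zb)}_0$, if one prefers to prove it rather than quote Proposition \ref{lem:gaussian_estimates}. I would get it by the anisotropic rescaling $\eta=D_0\!\big((T-t)^{-1/2}\big)\big(y-e^{(T-t)B}x\big)$: using the homogeneity relations for $\Cv_{\zb}$, for $e^{sB}$ and for $[\cdot]_B$, one writes $\Gamma^{(\zb)}_0(t,x;T,y)=(T-t)^{-Q/2}G(\eta)$ with $G$ a fixed Gaussian density and $Q=\sum_{j=1}^{d}\s_j$, and checks that $D^{\b}_x$ brings out exactly a factor $(T-t)^{-|\b|_B/2}$ times a Schwartz function of $\eta$ --- this is where the $B$-length $|\b|_B$ enters, through the interaction of each $\p_{x_j}$ with the dilation $D_0$. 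The remaining $\eta$-integral of that Schwartz function against $[\eta]^{k}_B$ is a finite constant, and collecting the powers of $T-t$ ($-Q/2$ from normalisation, $+Q/2$ from $dy$, $+k/2$ from $[y-e^{(T-t)B}x]^{k}_B=(T-t)^{k/2}[\eta]^{k}_B$, $-|\b|_B/2$ from the derivative) produces the exponent $\tfrac{k-|\b|_B}{2}$. Keeping track of that last interaction is the one delicate bookkeeping step; the rest is routine.
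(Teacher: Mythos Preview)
Your proof is correct and follows essentially the same route as the paper: the subtraction of $\psi\big(e^{(T-t)B}x\big)$ using $D_x^\beta\!\int\Gamma_0^{(\zb)}\,dy=0$, then the $C^k_B$ bound on the increment combined with the weighted Gaussian derivative estimate, is exactly what the paper does. The only cosmetic difference is that the paper cites the estimate on $D_x^\beta\Gamma_0^{(\zb)}$ directly from \cite{Polidoro1994} rather than via Proposition~\ref{lem:gaussian_estimates}, and dismisses the case $k=0$ as ``straightforward'' without writing it out.
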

\begin{proof}
We prove the case $k\in \,]0,2r+1]$ since the case $k=0$ is straightforward. We first note that,
since $\Gamma_{0}^{(\zb)}$ is a density and $|\b|_{B}>0$, we have
\begin{align}\label{eq:integral_polynom_derivat}
 D_x^{\ggg}\int\limits_{\mathbb{R}^d}\Gamma^{(\zb)}_{0}(t,x;T,y)d y=0.
\end{align}
and therefore
\begin{align}
 D_x^{\beta}  u_0^{(\zb)}(t,x)&=  \int\limits_{\mathbb{R}^d} \psi(y) D_x^{\beta} \Gamma_{0}^{(\zb)}(t,x;T,y) d y\\
  &=\int\limits_{\mathbb{R}^d} \left(\psi(y)- \psi\big(e^{(T-t)B}x\big)\right) D_x^{\beta}\Gamma_{0}^{(\zb)}(t,x;T,y) d
  y.
\end{align}
Since $\psi\in C_{B}^{k}(\R^{d})$, we obtain
\begin{align}
 \left|D_x^{\beta}  u_0^{(\zb)}(t,x)\right| &\leq \|\psi\|_{C^{k}_{B}(\R^{d})} \int\limits_{\mathbb{R}^d} \big[y-e^{(T-t)B}x\big]^{k}_B \left|D_x^{\beta} \Gamma_{0}^{(\zb)}(t,x;T,y)\right| d
 y\\
 &\leq C\|\psi\|_{C^{k}_{B}(\R^{d})}(T-t)^{\frac{k-|\b|_{B}}{2}} \int\limits_{\mathbb{R}^d} \Gamma^{{2 M}}(t,x;T,y) d
 y,
\end{align}
where the second inequality follows from a direct estimate on the derivatives of
$\Gamma_{0}^{(\zb)}$ (see, for example, Section 2 in \cite{Polidoro1994}) and $\Gamma^{2 M}$ is
the fundamental solution of the Kolmogorov operator {$\Kc^{2M}$ as defined in}
\eqref{eq:heatoperator}.
\end{proof}
In the next lemmas we will use the following result proved in \cite{LanconelliPolidoro1994}.
\begin{lemma}
\label{lem:matrix_homogeneity} 
The following homogeneity relations hold
\begin{align}\label{eq:matrix_homogeneity}
 \Cv_{\zb}(t)=& D_0(\sqrt{t})\Cv_{\zb}(1)D_0(\sqrt{t}), \\
 \label{eq:matrix_homogeneity_c}
 \Mv_{\zb}(t)=& D_0(\sqrt{t})\Mv_{\zb}(1)D_0(\sqrt{t}), \\ \label{eq:matrix_homogeneity_b}
 e^{tB}=& D_0(\sqrt{t})e^{B}D_0\left(\frac{1}{\sqrt{t}}\right),
\end{align}
for any $t>0$.
\end{lemma}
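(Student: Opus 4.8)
The plan is to obtain all three identities from a single scaling property of the drift matrix $B$ relative to the dilations $D_0(\lam)$. By the block form \eqref{eq:B_blocks} and the definition \eqref{eq:dilation_zero}, each sub-block $B_j$ sits in the position that maps the group of coordinates carrying the weight $\lam^{2j-1}$ into the one carrying the weight $\lam^{2j+1}$ (recall from \eqref{eq:dilation_exponents} that the successive weights are $\lam,\lam^{3},\dots,\lam^{2r+1}$); hence conjugation by $D_0(\lam)$ multiplies every $B_j$ by $\lam^{2j+1-(2j-1)}=\lam^2$, i.e.
\begin{equation}
 D_0(\lam)\,B\,D_0(\lam)^{-1}=\lam^2 B,\qquad \lam>0.
\end{equation}
Exponentiating gives $D_0(\lam)\,e^{sB}\,D_0(\lam)^{-1}=e^{\lam^2 sB}$ for all $s\in\R$, and since $D_0(\lam)^{-1}=D_0(1/\lam)$, the choice $\lam=\sqrt t$, $s=1$ yields \eqref{eq:matrix_homogeneity_b}, while $\lam=\sqrt t$, $s=-1$ gives the companion relation $e^{-tB}=D_0(\sqrt t)\,e^{-B}\,D_0(1/\sqrt t)$, which I would use below.

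For \eqref{eq:matrix_homogeneity} I would rescale the defining integral $\Cv_{\zb}(t)=\int_0^t e^{sB}A(\zb)e^{sB^*}\,ds$ by $s=t\tau$, obtaining $\Cv_{\zb}(t)=t\int_0^1 e^{t\tau B}A(\zb)e^{t\tau B^*}\,d\tau$, and then insert $e^{t\tau B}=D_0(\sqrt t)\,e^{\tau B}\,D_0(1/\sqrt t)$ together with its transpose. Because $A(\zb)$ in \eqref{eq:covariance_mean} is supported in the top-left $p_0\times p_0$ block, where $D_0(1/\sqrt t)$ reduces to $t^{-1/2}I_{p_0}$, one has $D_0(1/\sqrt t)\,A(\zb)\,D_0(1/\sqrt t)=t^{-1}A(\zb)$; the two factors of $t$ cancel and what remains is $\Cv_{\zb}(t)=D_0(\sqrt t)\big(\int_0^1 e^{\tau B}A(\zb)e^{\tau B^*}\,d\tau\big)D_0(\sqrt t)=D_0(\sqrt t)\,\Cv_{\zb}(1)\,D_0(\sqrt t)$. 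Then \eqref{eq:matrix_homogeneity_c} is immediate: substituting \eqref{eq:matrix_homogeneity} and the $s=-1$ relation above into $\Mv_{\zb}(t)=e^{-tB}\Cv_{\zb}(t)e^{-tB^*}$ from \eqref{eq:M}, the inner pairs $D_0(1/\sqrt t)\,D_0(\sqrt t)$ collapse to the identity, leaving $\Mv_{\zb}(t)=D_0(\sqrt t)\,e^{-B}\Cv_{\zb}(1)e^{-B^*}\,D_0(\sqrt t)=D_0(\sqrt t)\,\Mv_{\zb}(1)\,D_0(\sqrt t)$.

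None of these steps is difficult: the argument is pure bookkeeping with block-diagonal scalings together with a change of variables. The only spot where an error could sneak in is the exponent count in $D_0(\lam)BD_0(\lam)^{-1}=\lam^2 B$ --- that is, matching the weights $\sig_j$ of \eqref{eq:dilation_exponents} to the sub-diagonal positions of the blocks $B_j$. This is precisely the structural fact underlying the homogeneity of degree two of $\Kc^{\Lambda}$ established in \cite{LanconelliPolidoro1994}, so one may alternatively just cite that reference for the whole lemma.
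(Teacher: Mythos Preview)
Your argument is correct. The paper does not actually prove this lemma: it simply states ``the following result proved in \cite{LanconelliPolidoro1994}'' and moves on. Your self-contained derivation from the single conjugation identity $D_0(\lam)\,B\,D_0(\lam)^{-1}=\lam^2 B$ is exactly the computation that underlies the cited reference, so in substance the approaches coincide; you have just unpacked what the paper outsources. Your final remark that one could alternatively cite \cite{LanconelliPolidoro1994} is precisely what the paper does.
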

\begin{notation}\label{not}
From now to the end of this section, we use the Greek letters $\a,\b,\g,\d,\nu$ to denote
multi-indexes in $\N_{0}^{d}$, and $|\a|=\sum_{i=1}^{d}\a_{i}$ is the standard Euclidean height of
$\a$. To simplify notations, if $I$ is any family of indexes, we use the unconventional notation
\begin{equation}\label{sumc}
  \sum_{\ell\in I}^{\bullet}\pi_{\ell} =\sum_{\ell\in I}c_{\ell}\pi_{\ell}
\end{equation}
for a sum where the constants $c_{\ell}$ depend only on $\zb, B,N,T,a_{ij},a_{i}$ and are
uniformly bounded by a constant that depends only on $M,\m,B, T,N$ and $B$.
\end{notation}
\begin{lemma}
\label{lem:W_representation} Let
\begin{equation} \label{def:W}
 \mathcal{W}(t)=e^{-tB^*}\nabla_x, \qquad t\in\R, 
\end{equation}
denote the differential operators appearing in \eqref{def_Gn} and by $\mathcal{W}^{\a}(t)$ the
composition\footnote{Operator $\mathcal{W}^{\a}(t)$ in \eqref{noy} is well defined since the
components of $\mathcal{W}(t)$ commute.}
\begin{equation}\label{noy}
  \mathcal{W}^{\a}(t)=\mathcal{W}^{\a_{1}}_{1}(t)\cdots \mathcal{W}^{\a_{d}}_{d}(t).
\end{equation}
The following representation holds true:
\begin{equation}
 \mathcal{W}^{\b}(t)= \sum_{\substack{|\a|=|\b|\\
 |\a|_{B}\ge |\b|_{B}}}^{\bullet} t^{\frac{|\a|_B-|\b|_B}{2}}D_x^{\a}.
\end{equation}
\end{lemma}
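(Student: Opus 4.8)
The plan is to prove the representation by induction on the Euclidean height $|\b|$, taking $|\b|=1$ as base case and analysing the block structure of $B$ imposed by Assumption \ref{assB}. Throughout I will use that the components of $\mathcal{W}(t)$ commute (as noted in the footnote to \eqref{noy}), so that $\mathcal{W}^{\b+\mathbf{e}_i}(t)=\mathcal{W}_i(t)\,\mathcal{W}^{\b}(t)$ for every $i$ and every $\b\in\Ndzero$.

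For the base case $\b=\mathbf{e}_i$ I would expand $\mathcal{W}_i(t)=\sum_{j=1}^d (e^{-tB^*})_{ij}\,\partial_{x_j}$ and identify the entries $(e^{-tB^*})_{ij}$. The block-subdiagonal form \eqref{eq:B_blocks} means that $B$ carries the coordinate block of weight $\s=2m-1$ into the block of weight $\s=2m+1$; dually, $B^*$ carries the block of weight $2m+1$ into the block of weight $2m-1$, so iterating, $\big((B^*)^k\big)_{ij}\neq 0$ forces $\s_j-\s_i=2k$. Consequently, in the power series $e^{-tB^*}=\sum_{k\ge0}\frac{(-t)^k}{k!}(B^*)^k$ at most one term contributes to a given entry, giving $(e^{-tB^*})_{ij}=c_{ij}\,t^{(\s_j-\s_i)/2}$ with $c_{ij}$ a constant depending only on $B$, equal to $0$ unless $\s_j\ge\s_i$ and equal to $1$ when $i=j$. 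Hence $\mathcal{W}_i(t)=\sum_{\s_j\ge\s_i}c_{ij}\,t^{(\s_j-\s_i)/2}\,\partial_{x_j}$; since $\s_j=|\mathbf{e}_j|_B$ and $\s_i=|\mathbf{e}_i|_B$, this is exactly the claimed formula for $\b=\mathbf{e}_i$, with coefficients of the type $\sum^{\bullet}$.

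For the inductive step, assume the formula for $\mathcal{W}^{\b}(t)$. Then $\mathcal{W}^{\b+\mathbf{e}_i}(t)=\mathcal{W}_i(t)\,\mathcal{W}^{\b}(t)=\sum^{\bullet}_{\substack{|\a|=|\b|\\ |\a|_B\ge|\b|_B}}\ \sum_{\s_l\ge\s_i} c_{il}\, t^{\frac{|\a|_B-|\b|_B}{2}+\frac{\s_l-\s_i}{2}}\,D_x^{\a+\mathbf{e}_l}$, where commutativity of the $\partial_{x_j}$'s was used to write $\partial_{x_l}D_x^{\a}=D_x^{\a+\mathbf{e}_l}$. Setting $\g=\a+\mathbf{e}_l$, one checks that $|\g|=|\a|+1=|\b+\mathbf{e}_i|$, that the exponent of $t$ equals $\frac{|\a|_B+\s_l-|\b|_B-\s_i}{2}=\frac{|\g|_B-|\b+\mathbf{e}_i|_B}{2}$, and that $|\g|_B=|\a|_B+\s_l\ge|\b|_B+\s_i=|\b+\mathbf{e}_i|_B$ by the two inequalities $|\a|_B\ge|\b|_B$ and $\s_l\ge\s_i$. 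The coefficient of a given $D_x^{\g}$ is a finite sum of products of the $c_{ij}$'s, hence still a constant depending only on $B$ (and, through the range of summation, on $\b$), so the whole expression is of the type $\sum^{\bullet}$. This is precisely the asserted identity for $\b+\mathbf{e}_i$.

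The induction and the index bookkeeping are routine; the one point deserving care, and the step I would write out explicitly, is the structural claim that $B^*$ shifts the dilation weight $\s$ by exactly two at each application, so that $\big((B^*)^k\big)_{ij}\neq0\Rightarrow\s_j-\s_i=2k$. This is exactly where Assumption \ref{assB} is used in full, and it is what simultaneously produces the homogeneity exponent $\tfrac{|\a|_B-|\b|_B}{2}$ and the constraint $|\a|_B\ge|\b|_B$; everything else follows formally. (Equivalently, one may derive $(e^{-tB^*})_{ij}=t^{(\s_j-\s_i)/2}(e^{-B^*})_{ij}$ directly from the homogeneity relation \eqref{eq:matrix_homogeneity_b} applied to $-B$ and transposed, but the elementary computation with the nilpotent $B^*$ above is cleaner and self-contained.)
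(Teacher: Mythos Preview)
Your proof is correct and follows essentially the same route as the paper: reduce to a single component $\mathcal{W}_i(t)$, identify $(e^{-tB^*})_{ij}=c_{ij}\,t^{(\s_j-\s_i)/2}$ with $c_{ij}=0$ unless $\s_j\ge\s_i$, and then compose. The only difference is that the paper reads off this scaling directly from the homogeneity relation \eqref{eq:matrix_homogeneity_b} (transposed), whereas you obtain it from the power-series expansion and the block-nilpotent structure of $B^*$; you yourself note this equivalence in your final parenthetical, and your explicit inductive step is something the paper leaves implicit.
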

\begin{proof}
It suffices to prove the statement for a single $\mathcal{W}_i(t)$. Using the relations in Lemma
\ref{lem:matrix_homogeneity}, we have
\begin{align}
 \mathcal{W}_{i}(t) 
          &= \sum_{j=1}^d D_0\left(\frac{1}{\sqrt{t}}\right)_{i i}e^{-B^*}_{ij}D_0\big(\sqrt{t}\big)_{jj}\p_{x_j}\\
          &= t^{-\frac{\s_i}{2}}\sum_{j=1}^d e^{-B^*}_{ij}t^{\frac{\s_j}{2}}\p_{x_j},
\end{align}
with $\s_{i}$ as in \eqref{eq:dilation_exponents}. The result follows noting that the intrinsic
order of $\p_{x_j}$ is exactly $\s_j$. Moreover, as the matrix $e^{-B^*}$ is upper triangular the
sum actually ranges over $j=i,\dots, d$ and thus $\s_j-\s_i$ is always a nonnegative integer.
\end{proof}
Next step is the study of the operator $\Mc^{(\bar{z})}(t,x)$: we recall that, by Proposition
\ref{proposition_welldef}, the components of $\Mc^{(\bar{z})}(t,x)$ commute when applied to
$\G_{0}^{(\zb)}$ and more generally to $u^{(\zb)}_{n}$ and its derivatives.
\begin{lemma}
\label{lem:Mx_representation}
For any $\b\in\N_{0}^{d}$, we have
 \begin{equation}\label{eq:ste100}
 \left(\Mc^{(\zb)}(s-t,x)-e^{\left(s-\bar{t}\right)B}\xb\right)^{\b} = 
 \sum_{\substack{
 |\d|+|\a|\leq |\b|\\ |\d|_B-|\a|_B\le |\b|_B}}^{\bullet}
 (s-t)^{\frac{|\b|_B+|\a|_B-|\d|_B}{2}}\left(x-e^{\left(t-\bar{t}\right)B}\xb\right)^{\d}D_x^{\a}.
\end{equation}
\end{lemma}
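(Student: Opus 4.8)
The plan is to prove the identity by induction on $|\b|$, using the factorized structure of $\Mc^{(\zb)}(s-t,x)$ from \eqref{eq:M} together with the homogeneity relations of Lemma \ref{lem:matrix_homogeneity}. First I would handle the base case $|\b|=1$, i.e. a single component $\Mc_i^{(\zb)}(s-t,x)-\big(e^{(s-\bar t)B}\xb\big)_i$. Writing $\Mc^{(\zb)}(s-t,x)=e^{(s-t)B}\big(x+\Mv_{\zb}(s-t)\nabla_x\big)$ and using $e^{(s-\bar t)B}\xb=e^{(s-t)B}e^{(t-\bar t)B}\xb$, I can factor out $e^{(s-t)B}$ and reduce to analyzing $\big(x-e^{(t-\bar t)B}\xb\big)+\Mv_{\zb}(s-t)\nabla_x$; the first piece already has the right form with $|\d|=1,|\a|=0$, and for the second I apply the homogeneity $\Mv_{\zb}(s-t)=D_0(\sqrt{s-t})\,\Mv_{\zb}(1)\,D_0(\sqrt{s-t})$ exactly as in the proof of Lemma \ref{lem:W_representation}, picking up a factor $(s-t)^{(\s_j+\s_k)/2}$ on $\p_{x_k}$, which matches the exponent $\tfrac{|\b|_B+|\a|_B-|\d|_B}{2}$ with $|\b|_B=\s_i$, $|\d|_B=0$, $|\a|_B=\s_k$ (after absorbing the entries of $e^{(s-t)B}$, likewise homogeneous by \eqref{eq:matrix_homogeneity_b}, into the $\bullet$-constants). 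The index constraints $|\d|+|\a|\le|\b|$ and $|\d|_B-|\a|_B\le|\b|_B$ are easily checked termwise in the base case.

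For the inductive step I would write $(\cdot)^\b=(\cdot)^{\b'}\cdot\big(\Mc_i^{(\zb)}(s-t,x)-(e^{(s-\bar t)B}\xb)_i\big)$ for a suitable $i$ with $\b=\b'+\mathbf e_i$, apply the $|\b|=1$ expansion to the last factor, and then commute the resulting first-order operators past the expansion of $(\cdot)^{\b'}$ given by the inductive hypothesis. Here the key point — and the main technical obstacle — is the bookkeeping of the weighted degrees under composition: when a term $(s-t)^{\frac{|\b'|_B+|\a'|_B-|\d'|_B}{2}}\big(x-e^{(t-\bar t)B}\xb\big)^{\d'}D_x^{\a'}$ is multiplied by a term of the form $(s-t)^{\frac{\s_i+\s_k-0}{2}}\big(x-e^{(t-\bar t)B}\xb\big)^{\,0\text{ or }\mathbf e_\ell}\,D_x^{\mathbf e_k\text{ or }0}$, one must verify that after recollecting monomials in $\big(x-e^{(t-\bar t)B}\xb\big)$ and $D_x$ — which requires moving $D_x$'s past powers of $\big(x-e^{(t-\bar t)B}\xb\big)$ and hence produces lower-order terms by the Leibniz rule — every resulting term still satisfies $|\d|+|\a|\le|\b|$ and $|\d|_B-|\a|_B\le|\b|_B$, with the correct power of $(s-t)$. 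Each differentiation that lowers $|\d|$ by one and $|\a|$ by one changes $|\d|_B-|\a|_B$ by $\s_m-\s_{m'}$ for the relevant indices, and one checks this stays within the claimed range; similarly the total time-power is read off by adding exponents, noting that the "lost" factors from commutation are compensated because differentiating $\big(x-e^{(t-\bar t)B}\xb\big)^{\d}$ in $x_m$ does not alter the $(s-t)$-power, while the inequality $|\d|_B-|\a|_B\le|\b|_B$ is exactly what guarantees the exponent $\tfrac{|\b|_B+|\a|_B-|\d|_B}{2}$ stays nonnegative.

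I expect the hard part to be purely combinatorial: organizing the induction so that the two inequalities and the time-exponent are simultaneously preserved under the noncommutative product, rather than any deep analytic input. Since the coefficients arising are products of entries of $e^{(s-t)B}$, $\Mv_{\zb}(1)$ and binomial factors — all bounded in terms of $M,\m,B,T,N$ and independent of the base point beyond $\Mv_{\zb}(1)$ — they fit the $\sum^{\bullet}$ convention of Notation \ref{not}, so no separate constant-tracking is needed. I would therefore present the $|\b|=1$ computation in detail and then indicate that the general case follows by a routine but lengthy induction, exactly parallel to the treatment of $\Wc^\b$ in Lemma \ref{lem:W_representation}.
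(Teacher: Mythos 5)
Your plan is correct and follows essentially the same route as the paper's proof: reduce to $\xb=0$, $t=0$ via the factorization $\Mc^{(\zb)}(s-t,x)-e^{(s-\bar t)B}\xb=e^{(s-t)B}\bigl(x-e^{(t-\bar t)B}\xb+\Mv_{\zb}(s-t)\nabla_x\bigr)$, handle $|\b|=1$ with the homogeneity relations of Lemma \ref{lem:matrix_homogeneity}, and then induct on $|\b|$ by composing the one-step expansion with the inductive hypothesis. Your explicit check that the Leibniz corrections from commuting $D_x^{\a}$ past $\bigl(x-e^{(t-\bar t)B}\xb\bigr)^{\d}$ lower $|\a|_B$ and $|\d|_B$ by the same amount (hence preserve both index constraints and the $(s-t)$-exponent) is a point the paper's proof passes over silently, so no gap there.
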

\begin{proof}
First of all, let us note that
\begin{equation}
 \Mc^{(\zb)}(s-t,x)-e^{(s-\bar{t})B}\xb=e^{(s-t)B}\left(x -e^{(t-\bar{t})B}\xb +\Mv_{\zb}(s-t)\nabla_x\right),
\end{equation}
and 
it is not restrictive to take $\xb=0$ and $t=0$.
We proceed now by induction on $|\beta|$. If $|\beta|=1$ then $\b=\mathbf{e}_i$ where
$\mathbf{e}_i$ is the $i$-th element of the canonical basis of $\Rd$. A direct computation shows
\begin{align}
 \big(\Mc^{(\zb)}(s,x)\big)^{\mathbf{e}_{i}} & = \sum_{|\d|=1 \atop |\d|_B \leq |\mathbf{e}_i|_B}^{\bullet}
 s^{\frac{|\mathbf{e}_i|_B - |\d|_B}{2}}  \big(x^{\d} +(\Mv_{\zb}(s)\nabla_x)^{\d}\big)&&\text{(by \eqref{eq:matrix_homogeneity_b})}\\
 & = \sum_{|\d|=1 \atop |\d|_B \leq |\mathbf{e}_i|_B}^{\bullet} s^{\frac{|\mathbf{e}_i|_B - |\d|_B}{2}}  \bigg(x^{\d} + s^{\frac{|\d|_B}{2}}
 \sum_{|\nu| = 1}^{\bullet} s^{\frac{|\nu|_B}{2}} D^{\nu}_x \bigg),&&\text{(by \eqref{eq:matrix_homogeneity_c})}
\label{eq:ste101}
\end{align}
which proves \eqref{eq:ste100} with $\b=\mathbf{e}_{i}$. We now assume the {statement} to hold for
$|\beta|\le n$, and prove it true for $\beta + \mathbf{e}_i$. By inductive hypothesis applied to
both $\b$ and $\mathbf{e}_i$ we get
\begin{align}
 \big(\Mc^{(\zb)}(s,x)\big)^{\b+\mathbf{e}_i} & = \sum_{\substack{|\d^1|+|\a^1|\leq 1\\ |\d^1|_B-|\a^1|_B\le |{\bf e}_i|_B}}^{\bullet}
  \sum_{\substack{|\d^2|+|\a^2|\leq |\b|\\ |\d^2|_B-|\a^2|_B\le |\b|_B}}^{\bullet} s^{\frac{|\mathbf{e}_i|_B +|\a^1|_B - | \d^1|_B}{2}}s^{\frac{|\b|_B +|\a^2|_B - | \d^2|_B}{2}}  x^{ \d^1}
   D_x^{\a^1}\left(x^{ \d^2} D_x^{\a^1}\right)\\
 & = \sum_{\substack{ |\d|+|\a|\leq |\b + \mathbf{e}_i| \\ |\d|_B-|\a|_B\le |\b + \mathbf{e}_i|_B}}^{\bullet}  s^{\frac{|\b + \mathbf{e}_i|_B +|\a|_B - | \d|_B}{2}}  x^{ \d}
 D_x^{\a},\qquad\ \text{(setting $\d=\d^1+\d^2$ and $\a=\a^1+\a^2$).}
\end{align}
\end{proof}
\begin{lemma} For any $n\in\N$, with $n\le N$, we have the following representation
\begin{equation}\label{eq:estimate_Gc}
 \Gc^{(\zb)}_{n}(t,s,x)=
 \sum_{(\a,\d)\in I_{n}}^{\bullet} (s-t)^{\frac{|\a|_B-|\d|_B +
 n-2}{2}}(x-e^{(t-\bar{t})B}\xb)^{\d}D_x^{\a},
\end{equation}
where
  $$I_{n}=\{(\a,\d)\in \N_{0}^{d}\times \N_{0}^{d}\mid 1\le |\a|\le n+2,\, |\d|_{B}\le n,\, |\a|_B-|\d|_B + n-2\ge 0\}.$$
\end{lemma}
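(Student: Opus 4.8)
The plan is to substitute the three available structural descriptions into the defining formula \eqref{def_Gn} for $\Gc^{(\zb)}_{n}$ and then collect terms, keeping all multi-index and power-of-$(s-t)$ accounting inside the $\bullet$-sum convention of Notation \ref{not} (so that the scalar coefficients never need to be tracked explicitly).

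First I would rewrite $\TT_n(a_{ij},\zb)-\TT_{n-1}(a_{ij},\zb)$, using \eqref{eq:tay_int}, as the weight-$n$ homogeneous component of the intrinsic Taylor polynomial, i.e. the sum, over the indices with $2k+|\b|_B=n$, of the monomials $\tfrac{1}{k!\,\b!}\big(Y^k\p_{\xi}^{\b}a_{ij}\big)(\zb)\,(s-\bar{t})^{k}\big(y-e^{(s-\bar{t})B}\xb\big)^{\b}$, and similarly for the $a_i$-part with $n$ replaced by $n-1$. By Assumption \ref{assC} (together with Theorem \ref{th:main}) every derivative $Y^k\p_\xi^\b a_{ij}$ and $Y^k\p_\xi^\b a_i$ appearing here exists and is bounded by a constant multiple of $M$, hence is admissible inside a $\bullet$-sum. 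Evaluating the polynomial at the operator argument $\Mc^{(\zb)}(s-t,x)$ then amounts to replacing $\big(y-e^{(s-\bar t)B}\xb\big)^{\b}$ by $\big(\Mc^{(\zb)}(s-t,x)-e^{(s-\bar t)B}\xb\big)^{\b}$, to which Lemma \ref{lem:Mx_representation} (formula \eqref{eq:ste100}) applies; at this step I would also carry the sharper inequality $|\d|_B\le|\b|_B$ that the inductive proof of that lemma actually yields (the polynomial factors are produced by the $x$-multiplication components of $\Mc$, each contributing to $|\d|_B$ at most its own weight), since this is what will force $|\d|_B\le n$. Finally I would expand $\big(e^{-(s-t)B^*}\nabla_x\big)_i\big(e^{-(s-t)B^*}\nabla_x\big)_j=\mathcal{W}_i(s-t)\mathcal{W}_j(s-t)$, and $\mathcal{W}_i(s-t)$ in the $a_i$-part, via Lemma \ref{lem:W_representation}, obtaining $\bullet$-sums of $(s-t)^{\frac{|\a''|_B-2}{2}}D_x^{\a''}$ with $|\a''|=2$, respectively $(s-t)^{\frac{|\a''|_B-1}{2}}D_x^{\a''}$ with $|\a''|=1$.

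Composing the two operator factors is harmless: the $x$-derivatives $D_x^{\a'}$ arising from the $\Mc$-substitution and $D_x^{\a''}$ arising from the $\mathcal{W}$'s commute with one another, while the polynomial coefficient $\big(x-e^{(t-\bar t)B}\xb\big)^{\d}$ produced by \eqref{eq:ste100} stands to their left in \eqref{def_Gn}, so no Leibniz cross terms are created; each resulting term therefore carries a nonnegative power of $(s-t)$ — the sum of the nonnegative exponents supplied by Lemmas \ref{lem:Mx_representation} and \ref{lem:W_representation} — an extra scalar factor $(s-\bar t)^{k}$, the polynomial $\big(x-e^{(t-\bar t)B}\xb\big)^{\d}$, and a derivative $D_x^{\a}$ with $\a=\a'+\a''$. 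Substituting $|\b|_B=n-2k$ in the $a_{ij}$-part (and $|\b|_B=n-1-2k$ in the $a_i$-part) and absorbing $(s-\bar t)^{k}=(s-t)^{k}$ — exact in the case $\bar t=t$ used in the sequel, and for a general base point obtained by expanding $(s-\bar t)=(s-t)+(t-\bar t)$ and re-collecting into terms still covered by $I_n$ — both parts yield the power $(s-t)^{\frac{|\a|_B-|\d|_B+n-2}{2}}$. The membership $(\a,\d)\in I_{n}$ is then read off directly: $|\a'|+|\d|\le|\b|\le|\b|_B\le n$ together with $|\a''|\in\{1,2\}$ gives $1\le|\a|\le n+2$; $|\d|_B\le|\b|_B\le n$ gives $|\d|_B\le n$; and nonnegativity of the composition exponent combined with the substitution for $|\b|_B$ gives $|\a|_B-|\d|_B+n-2\ge 2k\ge 0$.

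The only genuine difficulty I anticipate is combinatorial: carrying the multi-index and power-of-$(s-t)$ accounting consistently through the Taylor truncation, the $\Mc$-substitution, and the composition without dropping or double-counting terms, and in particular using the sharp bound $|\d|_B\le|\b|_B$ extracted from the proof of Lemma \ref{lem:Mx_representation} rather than its stated weaker form $|\d|_B-|\a|_B\le|\b|_B$, so that the final $\bullet$-sum ranges exactly over $I_{n}$ and not over a strictly larger index set.
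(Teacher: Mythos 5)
Your proof is correct and follows exactly the route the paper intends: the paper's own proof is the single sentence ``straightforward application of Lemmas \ref{lem:W_representation} and \ref{lem:Mx_representation} to \eqref{def_Gn}'', and you carry out precisely that substitution-and-bookkeeping, with the exponent arithmetic ($2k+|\b|_B=n$, the two nonnegative contributions, and $\a=\a'+\a''$) all checking out. Your observation that the constraint $|\d|_B\le n$ in $I_n$ requires the sharper bound $|\d|_B\le|\b|_B$ extractable from the proof of Lemma \ref{lem:Mx_representation}, rather than its stated conclusion $|\d|_B-|\a|_B\le|\b|_B$, is accurate and fills a detail the paper leaves implicit; likewise your remark that the clean identification $(s-\bar t)^k=(s-t)^k$ is exact only for the base point $\bar z=(t,x)$ actually used in the sequel.
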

\begin{proof}
Using the definition of $\Gc^{(\zb)}_{n}(t,s,x)$ in \eqref{def_Gn}, the proof is a straightforward
application of Lemmas \ref{lem:W_representation} and \ref{lem:Mx_representation}.
\end{proof}
\begin{lemma}\label{lem:Ln}
For any $n\in\N$, with $n\le N$, we have the following representation
\begin{equation}\label{eq:estimate_Ln}
 \Lc^{(\zb)}_{n}(t,T,x)=
 \sum_{(\a,\d)\in J_{n}}^{\bullet} (T-t)^{\frac{|\a|_B-|\d|_B + n}{2}}(x-e^{(t-\bar{t})B}\xb)^{\d}D_x^{\a},
\end{equation}
where
\begin{equation}\label{Jn}
  J_{n}=\{(\a,\d)\in \N_{0}^{d}\times \N_{0}^{d}\mid 1\le |\a|\le 3n,\, |\d|_{B}\le n,\, |\a|_B-|\d|_B + n\ge 0\}.
\end{equation}
\end{lemma}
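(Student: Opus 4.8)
The plan is to reduce the whole statement to the representation \eqref{eq:estimate_Gc} of the building blocks $\Gc^{(\zb)}_n$. Concretely, I would substitute \eqref{eq:estimate_Gc} for each factor $\Gc^{(\zb)}_{i_\ell}(t,s_\ell,x)$ in the definition \eqref{eq:def_Ln} of $\Lc^{(\zb)}_n$, normal-order the resulting composition of operators (that is, move all $x$-derivatives to the right of all the monomials), and then perform the iterated integrals in $s_1,\dots,s_h$. Since the base point $\zb$ plays no role I would abbreviate $c:=e^{(t-\bar t)B}\xb$ and work throughout with ``elementary'' operators of the form $(x-c)^{\d}D_x^{\a}$; by Proposition \ref{proposition_welldef} everything below is unambiguous once it is understood as acting on $x\mapsto\Gamma^{(\zb)}_0(t,x;T,y)$ and, via \eqref{e10}, on solutions of \eqref{eq:v.0.pide}---which is all that is used in \eqref{eq:un}.

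The first ingredient is a composition lemma for elementary operators, proved by induction on the number of factors using only the Leibniz rule $\p_{x_i}\big((x-c)^{\d'}g\big)=\d'_i(x-c)^{\d'-\mathbf e_i}g+(x-c)^{\d'}\p_{x_i}g$. Moving one derivative past one monomial either consumes it---lowering the monomial degree \emph{and} the derivative order by the same multi-index---or lets it pass through; so for any multi-indices one obtains
\begin{equation}
 (x-c)^{\d^1}D_x^{\a^1}\cdots(x-c)^{\d^m}D_x^{\a^m}=\sum^{\bullet}(x-c)^{\d}D_x^{\a},
\end{equation}
the sum ranging over $(\a,\d)$ with $|\a|+|\d|\le\sum_\ell(|\a^\ell|+|\d^\ell|)$, with $|\a|_B-|\d|_B=\sum_\ell(|\a^\ell|_B-|\d^\ell|_B)$, and with $|\a|\ge|\a^m|$; moreover the coefficients produced are products of binomial coefficients and falling factorials of multi-indices bounded only in terms of $N$, so the sum is of the $\sum^{\bullet}$ type of Notation \ref{not}. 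The single structural fact worth isolating is that \textbf{normal-ordering leaves $|\a|_B-|\d|_B$ invariant}: this is exactly what will force the exponent appearing in \eqref{eq:estimate_Ln}.

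With this in hand I would assemble $\Lc^{(\zb)}_n$. Fix $1\le h\le n$ and $i\in I_{n,h}$; after substituting \eqref{eq:estimate_Gc} for the $\Gc^{(\zb)}_{i_\ell}$ the only dependence on $s_\ell$ is the power $(s_\ell-t)^{m_\ell}$ with $m_\ell=\tfrac12(|\a^\ell|_B-|\d^\ell|_B+i_\ell-2)\ge0$, the sign coming from the constraint defining the index set in \eqref{eq:estimate_Gc}. Pulling these powers out and invoking the composition lemma, those constraints propagate to: $|\a|\le\sum_\ell(i_\ell+2)=n+2h\le3n$; $|\d|_B\le\sum_\ell i_\ell=n$; $|\a|_B-|\d|_B=\sum_\ell(|\a^\ell|_B-|\d^\ell|_B)\ge2h-n$, hence $|\a|_B-|\d|_B+n\ge2h>0$; and $|\a|\ge|\a^h|\ge1$---i.e.\ $(\a,\d)\in J_n$. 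The iterated integral is then handled by the rescaling $s_\ell-t=(T-t)\tau_\ell$: it equals $(T-t)^{\sum_\ell(m_\ell+1)}$ times a finite, $\zb$-independent iterated Beta-type integral (finite because $m_\ell\ge0$, exactly as for the identity used in the proof of Theorem \ref{th:error_estimates_taylor_global}), bounded in terms of $M,\m,B,T,N$; and a one-line count gives
\begin{equation}
 \sum_{\ell=1}^{h}(m_\ell+1)=\tfrac12\sum_{\ell=1}^{h}\big(|\a^\ell|_B-|\d^\ell|_B\big)+\tfrac12\sum_{\ell=1}^{h}i_\ell=\frac{|\a|_B-|\d|_B+n}{2},
\end{equation}
the exponent in \eqref{eq:estimate_Ln}. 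Summing over the boundedly many values of $h$ and $i\in I_{n,h}$ completes the argument.

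I do not anticipate a genuine obstacle: as the paper already remarks, these are the exact analogues of the parabolic computations of Section 5 in \cite{LPP4}, the point being that the Kolmogorov framework contains the parabolic one as a special case. The only delicate part is organisational---keeping the $\sum^{\bullet}$ bookkeeping honest through the Leibniz expansions (uniform bounds on coefficients, boundedly many terms) and verifying that the range conditions cutting out $J_n$ are stable both under composition and under the passage from the index sets of \eqref{eq:estimate_Gc} to $J_n$. Isolating the invariant $|\a|_B-|\d|_B$ at the outset is what keeps this under control.
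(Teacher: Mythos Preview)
Your proposal is correct and follows essentially the same route as the paper's own proof: substitute \eqref{eq:estimate_Gc} into each factor of \eqref{eq:def_Ln}, apply the Leibniz rule to normal-order the resulting compositions, compute the iterated time integral, and check that the surviving index pairs lie in $J_n$. Your explicit isolation of the invariant $|\a|_B-|\d|_B$ under normal-ordering and the careful verification of each constraint in $J_n$ are in fact slightly more detailed than what the paper writes out, but the argument is the same.
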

\begin{proof}
For greater convenience we recall the expression of $\Lc^{(\bar{z})}_n(t,T,x)$ as given in
\eqref{eq:def_Ln}:
\begin{align}
 \Lc^{(\bar{z})}_n(t,T,x)=\sum_{h=1}^{n}\sum_{i\in I_{n,h}}L_{h,i}(t,T,x), 
\end{align}
where
\begin{align}
  L_{h,i}(t,T,x):=&\int_{t}^T d s_1 \int_{s_1}^T d s_2 \cdots \int_{s_{h-1}}^T d s_h \Gc^{(\bar{z})}_{i_{1}}(t,s_1,x) \cdots
  \Gc^{(\bar{z})}_{i_{h}}(t,s_h,x),
\end{align}
and $I_{n,h}= \{i=(i_{1},\dots,i_{h})\in\mathbb{N}^{h} \mid i_{1}+\dots+i_{h}=n\}$, for $1\le h
\le n$. We prove that, for fixed $h\in \{1,\dots,n\}$ and $i\in I_{n,h}$ it holds
\begin{align}\label{c}
 L_{h,i}(t,T,x)=\sum_{(\a,\d)\in J_{n}}^{\bullet} (T-t)^{\frac{|\a|_B-|\d|_B + n}{2}}(x-e^{(t-\bar{t})B}\xb)^{\d}D_x^{\a},
\end{align}
the result will then readily follow.
We only consider the case $\xb=0$. Plugging equation \eqref{eq:estimate_Gc} into the definition of
$L_{h,i}$ we obtain
\begin{multline}
   L_{h,i}(t,T,x) = \sum_{(\a^1,\d^1)\in I_{i_1}}^{\bullet}\cdots\sum_{(\a^{h},\d^{h})\in I_{i_h}}^{\bullet} x^{\d^{1}}D_x^{\a^{1}}
   \left(x^{\d^2}D_x^{\a^2}\left(\cdots\left(x^{\d^h}D_x^{\a^h}\right)\right)\right)\times\\
       \times\int_{t}^T \cdots \int_{s_{h-1}}^T \prod_{j=1}^h(s_j-t)^{\frac{|\a^j|_B-|\d^j|_B+i_j-2}{2}} d s_1\cdots d s_h.
\end{multline}
Now, setting $\a=\a^1+\cdots +\a^h$, $\d=\d^1+\cdots +\d^h$ and recalling that $i_1+\cdots
+i_h=n$, the integral above can be easily computed to be equal to
\begin{equation}
(T-t)^{\frac{|\a|_B-|\d|_B+n}{2}},
\end{equation}
times a constant. The {statement} follows applying Leibniz rule 
and noticing that $(\a,\d)\in J_n$ if $(\a^j,\d^j)\in I_{i_j}$ for $j=1,\dots,h$.
\end{proof}
\begin{proof}[Proof of Proposition \ref{prop:der_un}]
By \eqref{eq:un}-\eqref{eq:estimate_Ln}, 
we get
\begin{align}
 D_x^{\b}u^{(\zb)}_n(t,x) & = D_x^{\b} \sum_{(\a,\d)\in J_{n}}^{\bullet} (T-t)^{\frac{|\a|_B-|\d|_B
 + n}{2}}\big(x-e^{(t-\bar{t})B}\xb\big)^{\d}D_x^{\a} u^{(\zb)}_0(t,x)
\intertext{(by applying Leibniz rule and reordering the indexes of $J_{n}$ in \eqref{Jn})}
 & =\sum_{\substack{(\a,\d)\in J_{n}\\
 \nu\le \min\{\b,\d\}}}^{\bullet} (T-t)^{\frac{|\a|_B-|\d|_B + n}{2}}\big(x-e^{(t-\bar{t})B}\xb\big)^{\d-\nu}D_x^{\a+\b-\nu}
 u^{(\zb)}_0(t,x),
\end{align}
where $\nu\le \min\{\b,\d\}$ means that $\nu_{i}\le \min\{\b_{i},\d_{i}\}$ for any $i=1,\dots,d$.
Now, by applying Proposition \ref{l1} and the property
\begin{equation}
 \big|y^{\d}\big|=\prod_{i=1}^{d}\left|y_{i}\right|^{\d_{i}}\le
 \prod_{i=1}^{d}[y]_{B}^{\s_{i}\d_{i}}=[y]_{B}^{|\d|_{B}},\qquad y\in\R^{d},
\end{equation}
we obtain
\begin{align}
 \big|D_x^{\b}u^{(\zb)}_n(t,x)\big| &\leq \sum_{\substack{(\a,\d)\in J_{n}\\
 \nu\le \min\{\b,\d\}}}^{\bullet} (T-t)^{\frac{-|\d|_B + n
 +k  - |\b|_B+|\nu|_B}{2}} \big[x-e^{(t-\bar{t})B}\xb\big]_B^{|\delta|_B-|\nu|_B}\\
 & = \sum_{0\le m\le n}^{\bullet}
(T-t)^{\frac{-m + n +k - |\b|_B}{2}} \big[x-e^{(t-\bar{t})B}\xb\big]_B^{m},
\end{align}
and the statement follows by the elementary inequality {
\begin{equation}
 a^m b^{n-m}\leq a^n+b^n,\qquad  a,b\in \R_{>0},\ 0\le m\le n.
\end{equation}
}
\end{proof}
%
%
\bibliographystyle{siam}
\bibliography{LPP-bib}
\end{document}